\documentclass{article}

\usepackage{microtype}
\usepackage{graphicx}
\usepackage{subcaption}
\usepackage{booktabs} 

\usepackage{amsmath,amsfonts,bm}

\def\eqref#1{equation~\ref{#1}}

\def\1{\bm{1}}

\def\mI{{\bm{I}}}

\def\mO{{\bm{O}}}

\def\mQ{{\bm{Q}}}

\def\mU{{\bm{U}}}

\DeclareMathAlphabet{\mathsfit}{\encodingdefault}{\sfdefault}{m}{sl}
\SetMathAlphabet{\mathsfit}{bold}{\encodingdefault}{\sfdefault}{bx}{n}

\def\gB{{\mathcal{B}}}
\def\gC{{\mathcal{C}}}

\def\gE{{\mathcal{E}}}

\def\gN{{\mathcal{N}}}
\def\gO{{\mathcal{O}}}

\def\gS{{\mathcal{S}}}

\def\gX{{\mathcal{X}}}
\def\gY{{\mathcal{Y}}}

\newcommand{\R}{\mathbb{R}}

\DeclareMathOperator*{\argmin}{arg\,min}

\DeclareMathOperator{\sign}{sign}

\usepackage{hyperref}

\usepackage[preprint]{icml2026}

\usepackage{amsmath}
\usepackage{amssymb}
\usepackage{mathtools}
\usepackage{amsthm}
\usepackage{enumitem}

\usepackage[capitalize,noabbrev]{cleveref}

\theoremstyle{plain}
\newtheorem{theorem}{Theorem}[section]

\newtheorem{lemma}[theorem]{Lemma}
\newtheorem{corollary}[theorem]{Corollary}
\newtheorem{fact}[theorem]{Fact}
\theoremstyle{definition}
\newtheorem{definition}[theorem]{Definition}
\newtheorem{assumption}[theorem]{Assumption}
\newtheorem{example}[theorem]{Example}
\theoremstyle{remark}
\newtheorem{remark}[theorem]{Remark}

\usepackage[textsize=tiny]{todonotes}

\crefname{equation}{}{}
\Crefname{equation}{}{}
\usepackage{nicefrac}

\newcommand{\brox}[3]{\operatorname{brox}^{#1}_{#2}\!\left(#3\right)}
\newcommand{\eqdef}{\mathrel{:=}}

\DeclareMathOperator{\dom}{dom}
\DeclareMathOperator{\Proj}{Proj}

\DeclareMathOperator{\bdry}{bdry}

\newcommand{\ProjOn}[2]{\Proj_{#1}\!\left(#2\right)}

\newcommand{\prox}[2]{\operatorname{prox}_{#1}\!\left(#2\right)}

\newcommand{\rbrac}[1]{\left( #1 \right)}
\newcommand{\sbrac}[1]{\left[ #1 \right]}
\newcommand{\cbrac}[1]{\left\{ #1 \right\}}

\newcommand{\inner}[2]{\left\langle #1,\, #2 \right\rangle}
\newcommand{\norm}[1]{\| #1 \|}
\newcommand{\abss}[1]{\left\lvert #1 \right\rvert}

\newcommand{\dist}[2]{\operatorname{dist}(#1,#2)}
\newcommand{\distsq}[2]{\operatorname{dist}^2(#1,#2)}
\newcommand{\distpow}[3]{\operatorname{dist}^{#1}(#2,#3)}

\newcommand{\Exp}[1]{\mathbb{E}\!\left[ #1 \right]}

\usepackage{xcolor}

\definecolor{darkgreen}{RGB}{0,100,0}
\newcommand{\BPM}{\textcolor{darkgreen}{BPM}}
\newcommand{\PPM}{\textcolor{darkgreen}{PPM}}
\newcommand{\SPPM}{\textcolor{darkgreen}{SPPM}}
\newcommand{\TRPPM}{\textcolor{darkgreen}{TRPPM}}

\newcommand{\algname}[1]{\textcolor{darkgreen}{#1}}

\definecolor{dgreen}{RGB}{34,139,34}
\definecolor{dred}{RGB}{178,34,34}

\newcommand{\greencheck}{\textcolor{dgreen}{$\surd$}}
\newcommand{\redcross}{\textcolor{dred}{$\times$}}
\usepackage{multirow}

\usepackage[most]{tcolorbox}
\newtcolorbox{commentbox}{
  colback=orange!15,
  colframe=orange!80!black,
  boxrule=0.8pt,
  arc=3pt,
  left=6pt, right=6pt, top=6pt, bottom=6pt
}

\icmltitlerunning{Stabilized Proximal Point Method}

\begin{document}

\twocolumn[
  \icmltitle{Stabilized Proximal Point Method via Trust Region Control}

  \begin{icmlauthorlist}
    \icmlauthor{Hanmin Li}{aff}
    \icmlauthor{Kaja Gruntkowska}{aff}
    \icmlauthor{Peter Richt\'arik}{aff}
  \end{icmlauthorlist}

  \icmlaffiliation{aff}{Center of Excellence for Generative AI, KAUST, Thuwal, Saudi Arabia}

  \icmlcorrespondingauthor{Hanmin Li}{hanmin.li@kaust.edu.sa}

  \icmlkeywords{Optimization}

  \vskip 0.3in
]

\printAffiliationsAndNotice{}
\begin{abstract}
   The Proximal Point Method (\PPM) \cite{rockafellar1976monotone} is a fundamental tool for nonsmooth convex optimization. 
   However, its convergence is not linear under general convexity in the absence of strong convexity or other structural assumptions.
   To address this limitation, we study a trust-region stabilized proximal point scheme in which each proximal update is computed over a localized feasible region. 
   We show that this simple stabilization enforces non-vanishing steps and yields a linear decrease in objective values outside any prescribed neighborhood, without assuming smoothness or strong convexity. 
   Our analysis identifies a displacement condition as the key driver of linear descent and provides two complementary parameter regimes to guarantee it: fixing the trust-region radius and choosing the regularization properly, or fixing the regularization and selecting radii via a uniform displacement lower bound. 
   We further give explicit characterization of the linear regime conditions respectively, and prove that the trust-region is redundant under strong convexity, 
   Finally, we establish an exact equivalence with the Broximal Point Method (\BPM) \cite{gruntkowska2025ball} in the active constraint regime.
\end{abstract}

\section{Introduction}
Optimization provides a general framework for formulating and solving learning and inference problems across machine learning, signal processing, and statistics. Model training is typically cast as the minimization of an empirical risk or regularized objective, often of the form
\begin{align*}
   \label{eq:minimization-problem}
  \min_{x \in \R^d}\; f(x),
\end{align*}
where $x$ represents model parameters and $f$ encodes a loss function.
Within this broad class of problems, convex optimization plays a key role, as convexity allows local information to characterize global optimality and enables algorithms with strong guarantees.
As such, it provides a setting in which the interaction between optimization, statistics, and learning can be understood in a principled and mathematically complete way.

Throughout this work, we therefore focus on objective functions $f: \R^d \mapsto \R \cup \cbrac{+\infty}$ that are proper (which means that $\dom f \eqdef \cbrac{x \in \R^d: f(x) < +\infty}$ is nonempty), closed, convex and have at least one minimizer.

In this setting, \emph{proximal operators} constitute a cornerstone of modern optimization theory and practice.
First introduced by \citet{rockafellar1976monotone}, they underlie a broad class of algorithmic ideas in machine learning, often appearing implicitly even when not explicitly recognized.
At the core of these developments lies the proximal operator itself.
\begin{definition}
  The proximal operator with parameter $\gamma > 0$ associated with a function $f: \R^d \mapsto \R \cup \cbrac{+\infty}$ is 
  \begin{align*}
    \prox{\gamma f}{x} \eqdef \argmin_{z \in \R^d} \cbrac{f(z) + \frac{1}{2\gamma}\norm{z - x}^2}.
  \end{align*}
\end{definition}
Although proximal operators were originally introduced in the context of proximal algorithms, their significance extends far beyond this initial motivation.
They have found widespread practical use not only as theoretical constructs, but also as guiding principles for optimizer design, splitting methods, generalized projection viewpoints, and implicit regularization phenomena in modern optimization.
Proximal updates arise, explicitly or implicitly, in contemporary training pipelines, including accelerated methods, local training procedures, and adaptive optimizers such as \algname{AdamW}.
We provide a more detailed discussion of these connections in \Cref{sec:prox-review}.

Beyond serving as building blocks for more complex algorithms, one of the most classical and fundamental applications of proximal operators is their iteration as a stand-alone optimization method -- the original setting in which they were introduced.
This leads to the Proximal Point Method (\PPM) \cite{moreau1965proximite, martinet1970breve}, which iterates
\begin{align}
  x_{k+1} = \prox{\gamma_k f}{x_k}. \tag{\PPM}
\end{align}
Here, $\gamma_k \geq 0$ is commonly referred to as the \emph{step size} at iteration $k$.
This update is well defined whenever $f$ is proper, closed and convex, in which case the proximal operator is single valued \cite{bauschke2020correction,beck2017first}.
In practice, each iteration amounts to solving a proximal subproblem, whose computational cost depends on the structure of $f$ and is often handled via an inner optimization subroutine.

When $f$ is differentiable, the optimality condition of the proximal subproblem implies that, for any $\gamma > 0$, 
\begin{align*}
   z = \prox{\gamma f}{x} \;\Longleftrightarrow\; z + \gamma\nabla f(z) = x,
\end{align*}
As a result, the \PPM\ update can be equivalently written as 
\begin{align*}
   x_{k+1} = x_k - \gamma_k \nabla f(x_{k+1}).
\end{align*}
In this form, \PPM\ is closely related to Gradient Descent (\algname{GD}), except that the gradient is evaluated at the future iterate $x_{k+1}$, which makes the update implicit.
Such implicit schemes are known to exhibit better stability properties compared to their explicit counterparts \cite{ryu2016stochastic}.
As a result, proximal methods, unlike classical gradient-based algorithms, often exhibit notable robustness to inaccuracies in learning rate specifications \cite{asi2019stochastic}.

Beyond stability, an important advantage of proximal methods over widely used gradient-based approaches such as \algname{GD} and \algname{SGD} \cite{robbins1951stochastic,nemirovski2009robust,gower2019sgd} is that their convergence guarantees typically do not rely on smoothness assumptions on the objective function \cite{richtarik2024unified}.
This property makes them particularly attractive in modern deep learning applications, where loss functions are often nonsmooth or only weakly regularized \cite{zhang2020gradient}.

It is well known that under additional conditions, such as strong convexity or an error bound, \PPM\ enjoys a fast \emph{linear} convergence rate.
In the absence of such assumptions, however, no uniform linear rate can be guaranteed, and \PPM\ may converge only sublinearly in the worst case.
This phenomenon occurs even for simple one-dimensional problems.
\begin{example}
   \label{exp:1dconvex-sublinear}
   Consider the convex function 
   \begin{align*}
      f(x) \eqdef \frac{1}{4}x^4,
   \end{align*}
   and run \PPM\ with a constant step size $\gamma > 0$.
   The optimality condition of the proximal subproblem yields $x_k = x_{k+1} + \gamma x_{k+1}^3$, which implies that $x_k \to 0$ at a polynomial rate.
   In particular, $x_k = \Theta(k^{-1/2})$, and hence $f(x_k) = \Theta(k^{-2})$.
\end{example}
For the classical {\PPM}, this slow tail behavior is a fundamental limitation. While linear convergence is known to be achievable for general convex objectives without additional assumptions \citep{gruntkowska2025ball,gruntkowska2025non}, {\PPM} lacks such guarantees. This motivates the question of whether {\PPM} itself can be modified to achieve linear convergence for any proper, closed, and convex objective. In this work, we answer this question affirmatively.

To this end, we propose a novel trust-region stabilization of {\PPM} and show that it provides the key ingredient for achieving a linear decrease in the objective value, as long as the iterates remain outside a prescribed neighborhood of the minimizer set.
Concretely, we introduce the trust-region Proximal Point Method (\TRPPM), defined by the iteration
\begin{align}
   \label{eq:trppm-def}
   x_{k+1} = \argmin_{z \in \gB_{t_k}(x_k)} \cbrac{f(z) + \frac{\lambda_k}{2}\norm{z - x_k}^2} \tag{\TRPPM},
\end{align}
where $\gB_{t_k}(x_k) \eqdef \cbrac{z:\norm{z - x_k} \leq t_k}$ denotes the trust-region centered at $x_k$ with radius $t_k$, and the parameter $\lambda_k \geq 0$ controls the strength of the quadratic regularization.
In other words, {\TRPPM} coincides with the update rule of the classical {\PPM}, but with the optimization restricted to the local neighborhood $\gB_{t_k}(x_k)$ rather than the entire space~$\R^d$.
This seemingly minor modification plays a central role in both the algorithmic behavior and the theoretical properties of the method. In the remainder of the paper, we develop several complementary perspectives that illuminate the motivation for introducing this trust-region constraint and clarify how it interacts with proximal regularization.

\section{Contributions}

We summarize our contributions as follows:
\begin{itemize}
   \item \textbf{Trust-region stabilization yields linear descent under pure convexity.}
   We show that, for any proper, closed, and convex objective with a nonempty solution set, {\TRPPM} achieves a \emph{linear decrease} in the function value suboptimalities as long as the iterates remain outside a prescribed neighborhood of the solution set.
   This result requires neither strong convexity nor smoothness, and is achieved by enforcing a uniform lower bound on the step size away from the target neighborhood. 

    \item \textbf{Two complementary mechanisms for entering the linear regime.}
    We identify two distinct yet complementary ways to guarantee linear descent.
    In the first regime, the trust-region radius is fixed ($t_k \equiv t > 0$), and the regularization parameter is chosen such that $\lambda_k \le \lambda_k^\star$ (defined in \Cref{lemma:existence}), so that the proximal displacement $\norm{x_k - \prox{\nicefrac{f}{\lambda_k}}{x_k}}$ exceeds $t$, ensuring that the trust-region constraint is active at every non-tail iterate.
    In the second regime, the regularization parameter is fixed ($\lambda_k \equiv \lambda$), and the trust-region radii are selected according to a uniform lower bound $m_f(\varepsilon,\lambda)$ (defined in \Cref{def:m-quantity}) on the proximal displacement over points away from the solution set.

   \item \textbf{Explicit admissible parameter choices under weak sharp minima.}
   We provide general estimates for admissible values of the regularization parameter $\lambda_k$ and characterize the role of the displacement lower bound $m_f(\varepsilon, \lambda)$ in controlling the linear descent rate.
   Under the classical weak sharp minima condition ($q=1$), we further derive explicit sufficient bounds on $\lambda_k$ that guarantee entry into the linear regime.
   These results turn the existence of a critical regularization threshold and a displacement-based rate constant into verifiable rules depending only on the function value gap and the trust-region radius.

   \item \textbf{Trust-region redundancy under strong convexity.}
   We show that when the objective is strongly convex, {\TRPPM} achieves linear convergence even without enforcing a trust-region constraint, in agreement with classical theory.
   In this regime, the contraction factor can be controlled solely through the regularization parameter $\lambda_k$, explaining precisely why trust-region stabilization is necessary in the merely convex setting but becomes redundant under strong convexity.

   \item \textbf{A bridge between \PPM\ and \BPM.}
   We explain the source of {\TRPPM}'s improvement over the classical {\PPM} by showing that \TRPPM\ continuously interpolates between {\PPM} and the Broximal Point Method (\BPM) \citep{gruntkowska2025ball} (see \Cref{sec:prox-review}) through the joint choice of the parameters $(\lambda_k, t_k)$.
   In particular, when the unconstrained minimizer of the regularized model lies outside the trust-region, the \TRPPM\ update coincides exactly with the corresponding \BPM\ step.
   This equivalence shows that quadratic regularization improves the conditioning of the subproblems while leaving the iterates unchanged in the active constraint regime.
\end{itemize}

The rest of the paper is organized as follows.
In \Cref{sec:prox-review}, we present a review of topics related to proximal operators and highlight their broad range of applications.
\Cref{sec:conv-analysis} introduces the main assumptions and presents the general convergence theory of {\TRPPM}. 
The two complementary regimes are studied separately: the case of a fixed trust-region radius is analyzed in \Cref{sec:fixt}, while the case of a fixed regularization parameter is treated in \Cref{sec:fixlambda}.
In \Cref{sec:quantity-m}, we analyze the uniform lower bound on the proximal displacement and illustrate its role through concrete examples.
An explanation of why the trust-region constraint becomes redundant in the strongly convex setting is given in \Cref{sec:stronglyconvex}.
Finally, we conclude in \Cref{sec:conclusion}.

The notation used throughout the paper is summarized in \Cref{sec:notation}, and the acceleration mechanism from the perspective of {\BPM} is discussed in \Cref{sec:mechanism}.

\begin{table}[t]
  \caption{Comparison of linear convergence guarantees for {\PPM}, {\TRPPM}, and {\BPM}. 
  Neighborhood means that the algorithm converges linearly outside any prescribed neighborhood.}
  \label{tab:linear-conv-comparison}
  \begin{center}
    \begin{small}
      \begin{sc}
        \begin{tabular}{lc}
          \toprule
          Method & Linear Convergence \\
          \midrule
          {\PPM}
            & \redcross \\[0.5ex]

          {\TRPPM} (fixed $t$)
            & \greencheck\  (neighborhood) \\[0.5ex]

          {\TRPPM} (fixed $\lambda$)
            & \greencheck\  (neighborhood) \\[0.5ex]

          {\BPM}
            & \greencheck \\
          \bottomrule
        \end{tabular}
      \end{sc}
    \end{small}
  \end{center}
  \vskip -0.1in
\end{table}

\section{The many facets of {\PPM}}
\label{sec:prox-review}

Since their introduction, proximal methods have appeared in a wide range of contexts across optimization and machine learning.
We review their most prominent applications and highlight the connections that motivate our work.

\paragraph{Optimizer Design.}
Proximal algorithms have had a lasting impact on the design of modern optimizers in machine learning.
Rather than minimizing the objective function directly, many algorithms can be interpreted as approximating a proximal step, often by linearizing the loss while retaining a proximal treatment of regularization or stabilizing terms.
A notable example is the widely used optimizer \algname{AdamW} \cite{loshchilov2018decoupled}, which admits a proximal interpretation \cite{zhuang2022understanding}.
From this perspective, the decoupled weight decay (“W”) corresponds to applying the proximal operator of a quadratic regularizer in conjunction with an adaptive gradient step on the loss.
More broadly, trust-region constraints, projection steps, and gradient clipping can be interpreted as heuristic approximations of proximal updates, in the sense that they impose implicit regularization by restricting the update to a bounded neighborhood of the current iterate \citep{rockafellar1976monotone, parikh2014proximal, nocedal2006numerical}.

\paragraph{Distributed Optimization.}
Motivated by the growing scale and decentralization of modern learning systems, the proximal point framework has also been extended to stochastic and distributed settings \cite{konevcny2016federated,mcmahan2017communication}.
This has led to the Stochastic Proximal Point Method (\SPPM) \cite{bertsekas2011incremental,khaled2023faster}, also known as \algname{FedProx} \cite{li2020federated} in the context of federated learning.
Proximal operators have proven useful in local training, a technique commonly used in federated learning.
This refers to the practice of allowing each participating client to perform several local optimization steps on its own data before a communication-intensive synchronization step is carried out.
From this viewpoint, local training can be understood as each client approximately solving a proximal subproblem centered at the current global iterate.
Variants of local training were already used in earlier work \citep{povey2014parallel,moritz2016sparknet}, before being popularized in federated learning by \citet{mcmahan2017federated}.
Proximal algorithms can also be used directly to formulate local training schemes, rather than merely to interpret them, as exemplified by \algname{ProxSkip} \citep{mishchenko2022proxskip}, which interleaves local gradient steps with occasional proximal updates to reduce communication while retaining proximal convergence guarantees.

\paragraph{Projections.}
It is well known that proximal operators are closely related to projection operators.
In particular, the proximal operator of a convex function can be interpreted as a generalized projection onto a certain level set of the objective \cite{li2024power}.
Another classical connection arises when the function in the proximal operator is chosen to be the indicator function of a set $\gC$.
In this case, the proximal operator reduces exactly to the Euclidean projection,
\begin{align*}
   \prox{\iota_{\gC}}{x} = \ProjOn{\gC}{x}.
\end{align*}
This identity shows that projection methods are a special case of proximal methods.
Projection algorithms were originally introduced for solving linear systems \cite{kaczmarz1993approximate}, and were later extended to the more general setting of convex feasibility problems \cite{combettes1997hilbertian}.
In a similar spirit, {\SPPM} can be viewed as a generalization of parallel projection methods.

It is known that parallel projection methods can be accelerated by extrapolation, both in practice and in theory \cite{combettes1997convex,necoara2019randomized}.
In this setting, extrapolation consists of taking an over-relaxed step beyond the averaged projection point along the direction from the previous iterate, a construction that is closely related to classical over-relaxation in fixed point iterations and proximal splitting methods \cite{iutzeler2019generic,condat2023proximal}.
More recently, extrapolation has been shown to be effective in the more general setting of {\SPPM}, leading to improved performance in both theory and practice \cite{li2024power,li2024convergence, anyszka2024tighter}.

\paragraph{Splitting Algorithms.}

Proximal splitting methods extend the proximal point principle to structured objectives where evaluating the full proximal map of the sum is computationally prohibitive, but the individual terms admit cheap proximal or gradient evaluations \cite{combettes2011proximal,condat2023proximal}.
A canonical setting is the composite problem
\begin{align*}
  \min_{x \in \R^d}\; \cbrac{f(x) + g(x)},
\end{align*}
where splitting replaces a single proximal step on $f+g$ by inexpensive operations that process $f$ and $g$ separately.
When one term is smooth and the other is proximable, this leads to forward-backward splitting (proximal gradient), along with its variants \cite{tseng2000modified}.
When both terms are nonsmooth but proximable, Douglas-Rachford splitting applies the two proximal maps in an alternating manner, which is closely related to the alternating direction method of multipliers \algname{ADMM} \cite{lions1979splitting,eckstein1992douglas,boyd2011distributed}.
More general models involving linear operators are naturally treated by primal-dual splitting schemes, which perform explicit primal-dual updates in a product space and avoid inner linear solves \cite{chambolle2011first,condat2013primal,vu2013splitting}.
A unifying viewpoint is that many splitting algorithms can be interpreted as preconditioned or relaxed versions of {\PPM} applied to a monotone inclusion, providing insight into step-size rules and over-relaxation \cite{bauschke2020correction,davis2017three,condat2023proximal}.

\paragraph{Acceleration.}
Acceleration is closely intertwined with the proximal point framework, both as a mechanism that can be interpreted through {\PPM} and as a property that {\PPM} itself can inherit.
On the one hand, Nesterov acceleration can be understood as an approximation of a proximal update.
As shown in \citet{ahn2022understanding}, Accelerated Gradient Descent (\algname{AGD}) arises by centering a proximal step at an extrapolated point and replacing the exact proximal subproblem by a linearization, i.e., 
\begin{align*}
   x_{k+1} &= \argmin_{z} \cbrac{\inner{\nabla f(y_k)}{z - y_k} + \frac{1}{2\alpha_k}\norm{z - y_k}^2} \\
   y_{k+1} &= x_k + \beta_k(x_{k+1} - x_k),
\end{align*}
which enjoys the classical $\gO\rbrac{\nicefrac{1}{K^2}}$ rate for convex objectives.
On the other hand, {\PPM} itself admits accelerated variants.
\citet{guler1992new} established that {\PPM} can achieve accelerated convergence for convex minimization despite its implicit and non-smooth nature.
Subsequent work showed that acceleration can be retained under inexact solution of proximal subproblems, leading to accelerated inexact and hybrid proximal frameworks \cite{he2012accelerated,monteiro2013accelerated}.
This viewpoint also motivates generic acceleration frameworks such as \algname{Catalyst} \cite{lin2015universal,lin2018catalyst}, which can be seen as an inexact accelerated {\PPM} applied on top of a base method.
More recently, acceleration has also been obtained by modifying the proximal geometry itself. 
In particular, replacing the quadratic regularizer in {\PPM} with a higher order one leads to higher-order proximal methods that achieve faster polynomial convergence rates for convex objectives \cite{nesterov2023inexact}.

\paragraph{Smoothing.}

The proximal operator is closely related to the Moreau envelope \cite{moreau1965proximite}.
It is well known that applying proximal algorithms to the original objective is equivalent to running gradient-based methods on the corresponding Moreau envelope \cite{ryu2016stochastic,li2024power}.
Importantly, this reformulation does not alter the solution set, since the minimizers of the original objective and its Moreau envelope coincide \cite{planiden2016strongly,planiden2019proximal}.
Beyond proximal minimization, the Moreau envelope has found applications in other areas, including personalized federated learning \cite{t2020personalized} and meta-learning \cite{mishchenko2023convergence}.

\paragraph{BPM and Trust Region Method.}

Another line of work rooted in the proximal framework is the Broximal Point Method (\BPM) \cite{gruntkowska2025ball, gruntkowska2025non}.
{\BPM} replaces the regularization used in the proximal subproblems with an explicit trust-region-type constraint that directly bounds the step size.
Specifically, the method relies on the broximal operator:
\begin{definition}
   \label{def:brox}
   The broximal operator with radius $t>0$ associated with a function $f:\R^d \mapsto \R \cup \cbrac{+\infty}$ is given by 
   \begin{align}
      \brox{t}{f}{x} \eqdef \argmin_{z \in \gB_t(x)} f(z),
   \end{align}
   where $\norm{\cdot}$ is the standard Euclidean norm.
\end{definition}

From a theoretical perspective, {\BPM} is known to enjoy fast linear convergence under the assumptions that the objective is proper, closed, and convex.
However, each iteration of {\BPM} requires solving a local optimization problem involving the original objective $f$ restricted to a neighborhood of the current iterate.
In the absence of additional structure, this local subproblem can be as challenging as the original global problem.
Recently, {\BPM} has been extended to settings in which the trust-region is defined by a non-Euclidean geometry rather than the standard Euclidean norm \cite{gruntkowska2025non}.
The broximal framework is closely related to trust-region methods, whose origins can be traced back to early work by \citet{levenberg1944method} and were later formalized by \citet{marquardt1963algorithm} in the context of nonlinear optimization.
The main idea behind these methods is to minimize, at each iteration $k$, a local model~$m_k$ of the objective~$f$ around the current iterate~$x_k$, subject to a trust-region constraint that enforces locality.
In the simplest Euclidean setting, this leads to updates of the form
\begin{align*}
  x_{k+1} = \argmin_{z \in \gB_{t_k}(x_k)} m_k(z),
\end{align*}
where $t_k > 0$ controls the radius of the trust-region.
The use of balls as trust-regions has been studied in several recent contexts.
Acceleration techniques have been combined with ball operators \cite{carmon2020acceleration}, and similar constrained formulations have been employed in problems involving minimization of maximum or worst case losses \cite{carmon2021thinking,asi2021stochastic}.
Across these works, the additional ball constraint provides explicit control over the step length, thereby preventing excessively large updates that may arise from ill-conditioned objectives, inexact subproblem solutions, or imperfect local models.

\section{Convergence Theory of {\TRPPM}}
\label{sec:conv-analysis}

Our proposed algorithm {\TRPPM} admits two complementary interpretations.
On the one hand, it can be understood as a trust-region variant of the standard {\PPM}.
On the other hand, it can be interpreted as an instance of approximate {\BPM} based on the local model
\begin{align*}
   m_k(z) \eqdef f(z) + \frac{\lambda}{2}\norm{z - x_k}^2,
\end{align*}
that is, a local model whose approximation error vanishes at the center.
By appropriate choices of the parameters $t_k$ and $\lambda_k$, {\TRPPM} interpolates between these two methods: setting $t_k = +\infty$ recovers \PPM, while setting $\lambda_k = 0$ yields \BPM.
To better understand the interplay between~$\lambda_k$ and~$t_k$, we consider two complementary viewpoints in which $\lambda_k$ is fixed while $t_k$ varies, or $t_k$ is fixed while $\lambda_k$ varies.
Throughout the analysis, we assume that our objective function satisfies the following assumption.
\begin{assumption}
   \label{assmp:pcc}
   The function $f: \R^d \mapsto \R \cup \cbrac{+\infty}$ is proper, closed and convex, and has a non-empty set of minimizers $\gX_\star^f$.
\end{assumption}

\subsection{Fixing $t_k \equiv t$}
\label{sec:fixt}

Before presenting the convergence analysis, we introduce a key quantity that appears in the subsequent arguments.
\begin{definition}
   \label{def:displacement-func}
   Let $f:\R^d \mapsto \R \cup \cbrac{+\infty}$ satisfy \Cref{assmp:pcc}.
   For $\lambda > 0$ and $x \in \dom f$, we define the \emph{displacement function} $\phi(\lambda, x)$ by
   \begin{align*}
      \phi(\lambda, x) \eqdef \norm{x - \prox{\nicefrac{f}{\lambda}}{x}}.
   \end{align*}
   In the degenerate case $\lambda = 0$, we define $\prox{\nicefrac{f}{\lambda}}{x} \eqdef \ProjOn{\gX_\star^f}{x}$, so that
   \begin{align*}
      \phi(0, x) = \dist{x}{\gX_\star^f}.
   \end{align*}
\end{definition}
The function $\phi$ measures the distance between a point $x \in \R^d$ and its proximal minimizer, that is, the minimizer of the regularized objective.
As such, it provides a convenient way to quantify how far the proximal update would move from the current iterate.

The following lemma characterizes the behavior of $\phi(\lambda, x)$ as a function of the regularization parameter $\lambda$.
\begin{lemma}[Existence of $\lambda$]
   \label{lemma:existence}
   Let $f : \R^d \to \R \cup \cbrac{+\infty}$ satisfy \Cref{assmp:pcc}, and consider the update \eqref{eq:trppm-def} with a fixed radius $t_k \equiv t > 0$.
   For any iterate $x_k$ such that $\dist{x_k}{\gX_\star^f} > t$, there exists $\lambda_k^\star > 0$ such that
   \begin{align*}
      \phi(\lambda_k^\star, x_k) = t.
   \end{align*}
   Moreover, for any $\lambda_k \leq \lambda_k^\star$, it holds that
   \begin{align*}
      \norm{x_k - \prox{\nicefrac{f}{\lambda_k}}{x_k}} \geq t.
   \end{align*}
\end{lemma}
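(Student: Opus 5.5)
Fix $x = x_k$ with $\dist{x}{\gX_\star^f} > t$ and view $g(\lambda) \eqdef \phi(\lambda, x)$ as a function of $\lambda \in [0,\infty)$. The plan has three parts. First, show $g$ is continuous on $[0,\infty)$, including at $\lambda = 0$. Second, show $g$ is nonincreasing and tends to $0$ as $\lambda \to \infty$. Third, apply the intermediate value theorem: since $g(0) = \dist{x}{\gX_\star^f} > t$, there is $\lambda_k^\star > 0$ with $g(\lambda_k^\star) = t$. Monotonicity then gives $g(\lambda_k) \ge g(\lambda_k^\star) = t$ for every $\lambda_k \le \lambda_k^\star$. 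If $g$ fails to be strictly monotone, choosing $\lambda_k^\star$ as the largest root (closed set, bounded since $g \to 0$) still works.

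\textbf{Monotonicity.} Write $p_\lambda = \prox{\nicefrac{f}{\lambda}}{x}$ for $\lambda > 0$. Take $\lambda_1 < \lambda_2$ and compare the optimality of $p_{\lambda_1}$ and $p_{\lambda_2}$ in their respective subproblems:
\begin{align*}
f(p_{\lambda_1}) + \tfrac{\lambda_1}{2}\norm{p_{\lambda_1}-x}^2 &\le f(p_{\lambda_2}) + \tfrac{\lambda_1}{2}\norm{p_{\lambda_2}-x}^2,\\
f(p_{\lambda_2}) + \tfrac{\lambda_2}{2}\norm{p_{\lambda_2}-x}^2 &\le f(p_{\lambda_1}) + \tfrac{\lambda_2}{2}\norm{p_{\lambda_1}-x}^2.
\end{align*}
Adding the two gives $(\lambda_2-\lambda_1)\bigl(\norm{p_{\lambda_2}-x}^2 - \norm{p_{\lambda_1}-x}^2\bigr) \le 0$, so $g(\lambda_2) \le g(\lambda_1)$.

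For $\lambda = 0$, note that $\norm{p_\lambda - x} \le \norm{u - x}$ for any $u \in \gX_\star^f$, because $f(p_\lambda) \ge f(u)$ and $p_\lambda$ minimizes the regularized objective. Hence $g(\lambda) \le g(0)$, and $g$ is nonincreasing on all of $[0,\infty)$.

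\textbf{Limit as $\lambda \to \infty$.} This uses $x \in \dom f$. A standard argument bounds $f$ below by an affine minorant, which gives $\tfrac{\lambda}{2}\norm{p_\lambda - x}^2 \le f(x) - f(p_\lambda) \le f(x) - f_\star$. Therefore $g(\lambda) \le \sqrt{2(f(x)-f_\star)/\lambda} \to 0$.

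\textbf{Continuity, the main obstacle.} For $\lambda > 0$ I would use the resolvent identity, or simply nonexpansiveness of prox combined with the identity $\prox{\gamma f}{x} = \prox{\mu f}{\tfrac{\mu}{\gamma}x + (1-\tfrac{\mu}{\gamma})\prox{\gamma f}{x}}$. This gives $\norm{p_{\lambda_1}-p_{\lambda_2}} \le \abss{1-\lambda_2/\lambda_1}\,\norm{x - p_{\lambda_1}}$, which is local Lipschitz continuity in $\lambda$.

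The delicate point is continuity as $\lambda \downarrow 0$, where we need $p_\lambda \to \ProjOn{\gX_\star^f}{x}$. The points $p_\lambda$ are bounded, since $\norm{p_\lambda - x} \le g(0)$. The values satisfy $f(p_\lambda) \le f(u) + \tfrac{\lambda}{2}\norm{u-x}^2 \to f_\star$ for $u = \ProjOn{\gX_\star^f}{x}$. So by closedness of $f$, every cluster point $\bar p$ lies in $\gX_\star^f$ and satisfies $\norm{\bar p - x} \le \norm{u - x}$. Uniqueness of the projection onto the closed convex set $\gX_\star^f$ then forces $\bar p = u$, which yields $g(\lambda) \to g(0)$.

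Only the value comparison near $0$ is truly needed for the intermediate value argument. This is the step I expect to need the most care.
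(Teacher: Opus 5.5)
Your proof is correct and follows the same route as the paper. The paper also combines the bound $0\le\phi(\lambda,x_k)\le\dist{x_k}{\gX_\star^f}$, the limits as $\lambda\to0^+$ and $\lambda\to\infty$, monotonicity and continuity in $\lambda$, and the intermediate value theorem, then reuses monotonicity for the second claim.

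The differences lie only in how you prove the auxiliary facts, which the paper establishes as separate appendix lemmas:
\begin{itemize}
\item You get monotonicity by adding the two prox optimality inequalities. This is shorter than the paper's argument, which goes through monotonicity of $\partial f$ and a quadratic inequality in the ratio of the two displacements.
\item You get continuity for $\lambda>0$ from the resolvent identity and nonexpansiveness, which gives an explicit local Lipschitz bound. The paper instead uses boundedness, the closed graph of $\partial f$, and uniqueness of the prox.
\item At $\lambda=0$, your uniqueness-of-projection argument gives the stronger conclusion $p_\lambda\to\ProjOn{\gX_\star^f}{x}$. The paper only sandwiches the distances between a $\limsup$ and a $\liminf$.
\end{itemize}

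One slip needs fixing. Since $p_{\lambda_1}=\prox{\nicefrac{f}{\lambda_2}}{y}$ with $y=\frac{\lambda_1}{\lambda_2}x+\bigl(1-\frac{\lambda_1}{\lambda_2}\bigr)p_{\lambda_1}$, nonexpansiveness gives $\norm{p_{\lambda_1}-p_{\lambda_2}}\le\abss{1-\lambda_1/\lambda_2}\,\norm{x-p_{\lambda_1}}$. The ratio in your bound is inverted, and as written it fails: for $f(z)=\frac12 z^2$, $x=1$, $\lambda_1=10$, $\lambda_2=1$, the left side is $9/22$ but your right side is $9/110$. This does not affect continuity, because $\norm{x-p_\lambda}\le\dist{x}{\gX_\star^f}$ is bounded.

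Two further points are unnecessary rather than wrong:
\begin{itemize}
\item Choosing the largest root is not needed, since any root works by monotonicity. The largest root does give the widest admissible range for $\lambda_k$.
\item No affine minorant is needed, since $f\ge f_{\inf}$ under \Cref{assmp:pcc}.
\end{itemize}
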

The above lemma guarantees that, for any iterate outside the $t$-neighborhood of $\gX_\star^f$, there exists a value $\lambda_k^\star$ for which the displacement matches the trust-region radius.
Further properties of the displacement function are provided in \Cref{sec:displacement-lambda}.
We now use this characterization to establish the convergence behavior of {\TRPPM}.
\begin{theorem}
   \label{thm:linear-conv}
   Let $f:\R^d \mapsto \R \cup \cbrac{+\infty}$ satisfy \Cref{assmp:pcc} and let $\{x_k\}$ be the iterates of \Cref{eq:trppm-def} run with a fixed trust-region radius $t_k \equiv t > 0$,
   where the sequence $\cbrac{\lambda_k}$ satisfies $0 < \lambda_k \leq \lambda_k^\star$, with $\lambda_k^\star$ defined in \Cref{lemma:existence}.
   Then, for any iterate $K$ such that $\dist{x_K}{\gX_\star^f} > t$ (i.e., outside the $t$-neighborhood of the solution set), the following descent inequality holds:
   \begin{align*}
      f(x_K) - f_{\inf} \leq \rbrac{\frac{1}{1 + \nicefrac{t}{d_0}}}^K \rbrac{f(x_0) - f_{\inf}},
   \end{align*}
   where $d_0 \eqdef \dist{x_0}{\gX_\star^f}$.
\end{theorem}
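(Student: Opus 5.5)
The proof has three steps: show that the trust-region constraint is active at every non-tail iterate, deduce that the iterate coincides with a \BPM\ step of radius $t$, and then use a convexity argument along the segment toward a minimizer to get a one-step contraction.

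\textbf{Step 1: the constraint is active.} Fix an iterate $x_k$ with $k<K$. Since the sequence is monotone, as shown below, we may assume $\dist{x_k}{\gX_\star^f} > t$ for every $k \le K$; I will return to this point at the end. By \Cref{lemma:existence}, the choice $\lambda_k \le \lambda_k^\star$ gives $\norm{x_k - \prox{\nicefrac{f}{\lambda_k}}{x_k}} \ge t$.

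The subproblem objective $m_k(z) = f(z) + \frac{\lambda_k}{2}\norm{z-x_k}^2$ is strongly convex with unconstrained minimizer $p_k = \prox{\nicefrac{f}{\lambda_k}}{x_k}$. I claim the constrained minimizer $x_{k+1}$ over $\gB_t(x_k)$ lies on the boundary, i.e. $\norm{x_{k+1}-x_k} = t$.

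If $\norm{p_k - x_k} > t$, this follows from strong convexity. If $x_{k+1}$ were interior, it would be a local, hence global, minimizer of $m_k$, so $x_{k+1} = p_k$, which lies outside the ball. This is a contradiction. The equality case $\norm{p_k-x_k}=t$ gives $x_{k+1}=p_k$ on the boundary directly.

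\textbf{Step 2: equivalence with a \BPM\ step.} On the sphere $\norm{z-x_k}=t$ the quadratic term is constant. I then want to argue that $x_{k+1}$ minimizes $f$ over the whole ball $\gB_t(x_k)$, not only over its boundary.

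The argument is as follows. Let $y\in\gB_t(x_k)$ and suppose $f(y) < f(x_{k+1})$. Any minimizer of $f$ over the ball lies on the boundary: since $\dist{x_k}{\gX_\star^f}>t$, the ball contains no global minimizer, and convexity rules out interior local minima. So we may take $y$ on the boundary, and then $m_k(y) < m_k(x_{k+1})$, a contradiction. Hence $x_{k+1} = \brox{t}{f}{x_k}$.

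\textbf{Step 3: one-step contraction.} Take $x_\star = \ProjOn{\gX_\star^f}{x_k}$ and set $d_k = \norm{x_k - x_\star} > t$. The point $z = x_k + \frac{t}{d_k}(x_\star - x_k)$ lies in the ball, so by convexity
\begin{align*}
f(x_{k+1}) - f_{\inf} \le f(z) - f_{\inf} \le \rbrac{1 - \tfrac{t}{d_k}}\rbrac{f(x_k)-f_{\inf}}.
\end{align*}
Since $1 - \frac{t}{d_k} \le \frac{1}{1+\nicefrac{t}{d_k}}$, this gives a contraction factor at iterate $k$.

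To make the factor uniform, I need $d_k \le d_0$. This is the main obstacle. I would use the \BPM\ property from \citet{gruntkowska2025ball} that the distance to the solution set is nonincreasing along the iterates. Alternatively, I would show it directly: the active-constraint step $x_{k+1}$ satisfies the optimality condition $x_k - x_{k+1} \in \mu\,\partial f(x_{k+1})$ for some $\mu>0$. Combining this with convexity, $\inner{x_k-x_{k+1}}{x_{k+1}-x_\star}\ge 0$, and hence $\norm{x_{k+1}-x_\star}\le\norm{x_k-x_\star}$ for every $x_\star\in\gX_\star^f$.

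The same argument yields the monotonicity assumed in Step 1. If $\dist{x_K}{\gX_\star^f}>t$, then all earlier iterates are at distance greater than $t$, because distances are nonincreasing. Chaining the per-step factors $\frac{1}{1+\nicefrac{t}{d_k}} \le \frac{1}{1+\nicefrac{t}{d_0}}$ over $k=0,\dots,K-1$ gives the claimed bound.
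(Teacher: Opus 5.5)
Your proof is correct, but the contraction step follows a different route from the paper's.

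\textbf{The paper's route.} The paper does not use the \BPM\ equivalence here; that appears separately as \Cref{lemma:explaining}. It applies \Cref{lemma:variant} twice, with $y = x_k$ and $y = x_\star^f$, and combines the two subgradient inequalities via Cauchy--Schwarz into the per-step bound $\rbrac{1 + \frac{\norm{x_k - x_{k+1}}}{\norm{x_{k+1} - x_\star^f}}}\rbrac{f(x_{k+1}) - f_{\inf}} \le f(x_k) - f_{\inf}$. This bound holds at every step, whether or not the constraint is active. Activity enters only through $\norm{x_k - x_{k+1}} = t$, and Fej\'er monotonicity bounds the denominator by $d_0$.

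\textbf{Your route.} You use activity more strongly. Once $x_{k+1}$ lies on the sphere, $f(x_{k+1}) \le f(z)$ for points $z$ in the ball. Convexity along the segment toward $\ProjOn{\gX_\star^f}{x_k}$ then gives the factor $1 - \nicefrac{t}{d_k}$. This is strictly sharper than $\rbrac{1 + \nicefrac{t}{d_k}}^{-1}$, and it is attained exactly for $f(x) = \abss{x}$.

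Your Step~2 is heavier than needed. Your specific $z$ satisfies $\norm{z - x_k} = t = \norm{x_{k+1} - x_k}$. So $m_k(x_{k+1}) \le m_k(z)$ already gives $f(x_{k+1}) \le f(z)$, without the full \BPM\ equivalence.

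\textbf{Trade-off.} The paper's inequality is reused verbatim in the fixed-$\lambda$ theorem and the strongly convex theorem. There, interior (inactive) steps must be handled, and your segment argument gives nothing for them. Your argument gives a better rate. It also makes explicit two points the paper leaves implicit: why the constrained minimizer sits on the boundary, and why all iterates before $K$ lie outside the $t$-neighborhood.

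\textbf{One point to fix.} You state the Fej\'er argument only for active steps, but you use it to rule out earlier iterates entering the neighborhood, so it must hold at every step. It does hold unconditionally. Since $\lambda_k > 0$, the optimality condition gives $(c + \lambda_k)(x_k - x_{k+1}) \in \partial f(x_{k+1})$ with $c + \lambda_k > 0$ in both the interior and boundary cases.
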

The result shows that when the trust-region radius $t$ is fixed, the convergence of \TRPPM\ can be explicitly regulated through a suitable choice of the parameter sequence ${\lambda_k}$, ensuring linear convergence of the algorithm.
In general, the sequence $\{\lambda_k\}$ admits the following upper bound.
\begin{lemma}
   \label{lemma:roughestimate}
   Let assumptions of \Cref{thm:linear-conv} hold and suppose that $\dist{x_k}{\gX_\star^f} > t$ and $\norm{x_k - \prox{\nicefrac{f}{\lambda_k^\star}}{x_k}} \geq t$ for some $k$.
   Then
    \begin{align*}
      \lambda_k^\star \leq \frac{2\rbrac{f(x_k) - f_{\inf}}}{t^2}.
   \end{align*}
\end{lemma}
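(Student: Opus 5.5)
We compare the proximal objective at its minimizer with its value at the current point. Set $\lambda \eqdef \lambda_k^\star$ and $p \eqdef \prox{\nicefrac{f}{\lambda}}{x_k}$. By definition $p$ minimizes $z \mapsto f(z) + \frac{\lambda}{2}\norm{z - x_k}^2$ over $\R^d$. Testing this minimality against the trivial candidate $z = x_k$ gives
\begin{align*}
   f(p) + \frac{\lambda}{2}\norm{p - x_k}^2 \leq f(x_k).
\end{align*}
Since $p \in \dom f$ and $f(p) \geq f_{\inf}$ (finite by \Cref{assmp:pcc}), we can rearrange to
\begin{align*}
   \frac{\lambda}{2}\norm{p - x_k}^2 \leq f(x_k) - f(p) \leq f(x_k) - f_{\inf}.
\end{align*}

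Next we use the displacement hypothesis $\norm{x_k - p} \geq t$. This hypothesis is stated in the lemma; it also follows from \Cref{lemma:existence}, which gives equality $\phi(\lambda_k^\star, x_k) = t$. Substituting it into the left-hand side yields $\frac{\lambda}{2} t^2 \leq f(x_k) - f_{\inf}$. Since $t > 0$, dividing by $t^2/2$ gives
\begin{align*}
   \lambda_k^\star \leq \frac{2\rbrac{f(x_k) - f_{\inf}}}{t^2}.
\end{align*}

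One detail needs checking: $x_k \in \dom f$, so that $f(x_k)$ is finite and the comparison is meaningful. This holds for $k \geq 1$ because each iterate is a minimizer of a proper objective. For $k = 0$ we need $x_0 \in \dom f$, which the convergence statement of \Cref{thm:linear-conv} implicitly assumes; if $f(x_k) = +\infty$, the bound is trivial anyway. There is no real obstacle beyond this. The only substantive point is that the lower bound on displacement converts the quadratic penalty into a bound on $\lambda$, and $\lambda_k^\star > 0$ is not even needed for the inequality itself.
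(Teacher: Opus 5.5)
Your proof is correct and is essentially the paper's argument: you test the proximal subproblem at $z = x_k$ to obtain $\frac{\lambda_k^\star}{2}\norm{x_k - \prox{\nicefrac{f}{\lambda_k^\star}}{x_k}}^2 \leq f(x_k) - f_{\inf}$, and then insert the displacement lower bound $t$. The only cosmetic difference is that the paper first sets aside the case $\lambda_k^\star = 0$ and divides by $\lambda_k^\star$. You instead bound the left-hand side below by $\frac{\lambda_k^\star}{2}t^2$ and divide by $t^2/2$, which covers that case automatically.
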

Without additional assumptions on the geometry of $f$, it is in general impossible to obtain a more explicit characterization of the optimal parameter $\lambda_k^\star$.
However, such bounds can be derived under suitable growth conditions on the objective.
We illustrate this point by introducing a generalized version of the \emph{weak sharp minima condition}.
\begin{assumption}[Generalized Weak Sharp Minima]
   \label{assmp:sharp-minima}
   Let $f: \R^d \mapsto \R \cup \cbrac{+\infty}$ satisfy \Cref{assmp:pcc}.
   We say that $f$ has \emph{weak sharp minima of order $q \geq 1$ with constant $\alpha > 0$} if
   \begin{align*}
      f(x) - f_{\inf} \geq \alpha \cdot \distpow{q}{x}{\gX_\star^f}
      \qquad \forall x \in \R^d .
   \end{align*}
\end{assumption}
When $q = 1$, this condition reduces to the classical weak sharp minima assumption \cite{burke1993weak,burke2002weak}, which quantifies a linear growth of the objective away from the solution set.
Notably, it is strictly weaker than strong convexity, but nevertheless enables us to explicitly bound the parameter $\lambda_k^\star$, as illustrated by the following lemma.
For simplicity, we focus on the case $q=1$; extensions to higher orders are deferred to the appendix.
\begin{lemma}
   \label{lemma:simplifiedlambda}
   Let $f:\R^d \mapsto \R \cup \cbrac{+\infty}$ satisfy \Cref{assmp:pcc} and \Cref{assmp:sharp-minima} with parameters $\alpha > 0$ and $q = 1$.
   Let $\{x_k\}$ be the iterates of \Cref{eq:trppm-def} run with $t_k \equiv t > 0$, and suppose that the current iterate lies outside the $t$-neighborhood of the solution set, i.e., $\dist{x_k}{\gX_\star^f} > t$.
   If the parameters $\lambda_k$ satisfy
   \begin{align*}
      0 < \lambda_k \leq \frac{2\alpha^2}{f(x_k) - f_{\inf} + \alpha t},
   \end{align*}
   then it holds that
   \begin{align*}
      \norm{x_k - \prox{\nicefrac{f}{\lambda_k}}{x_k}} \geq t.
   \end{align*}
\end{lemma}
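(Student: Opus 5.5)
The plan is to argue through the first-order optimality condition of the proximal subproblem rather than through function-value comparisons. Fix $k$, write $p \eqdef \prox{\nicefrac{f}{\lambda_k}}{x_k}$ and $s \eqdef \norm{x_k - p}$. The optimality condition of the strongly convex proximal subproblem gives $g \eqdef \lambda_k (x_k - p) \in \partial f(p)$, so that $\norm{g} = \lambda_k s$. The argument then splits into two cases according to whether $p$ lies in $\gX_\star^f$.

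\emph{Case $p \in \gX_\star^f$.} This case is trivial: $s = \norm{x_k - p} \ge \dist{x_k}{\gX_\star^f} > t$.

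\emph{Case $p \notin \gX_\star^f$.} The key step is a lower bound on subgradients away from the solution set under \Cref{assmp:sharp-minima} with $q=1$. Let $\bar p \eqdef \ProjOn{\gX_\star^f}{p}$, which exists since $\gX_\star^f$ is closed and convex. The subgradient inequality for $f$ at $p$, evaluated at $\bar p$, gives
\begin{align*}
f(p) - f_{\inf} \le \inner{g}{p - \bar p} \le \norm{g}\,\dist{p}{\gX_\star^f}.
\end{align*}
Sharpness gives $f(p) - f_{\inf} \ge \alpha\, \dist{p}{\gX_\star^f}$. Since $\dist{p}{\gX_\star^f} > 0$, we can divide to obtain $\norm{g} \ge \alpha$, that is, $s \ge \alpha/\lambda_k$.

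Now plug in the assumed bound on $\lambda_k$:
\begin{align*}
s \ge \frac{\alpha}{\lambda_k} \ge \frac{f(x_k) - f_{\inf} + \alpha t}{2\alpha}.
\end{align*}
Applying sharpness at $x_k$ together with $\dist{x_k}{\gX_\star^f} > t$ yields $f(x_k) - f_{\inf} \ge \alpha\,\dist{x_k}{\gX_\star^f} > \alpha t$. Hence the right-hand side exceeds $(\alpha t + \alpha t)/(2\alpha) = t$, which proves $s \ge t$ (in fact strictly).

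The only point requiring care is the subgradient lower bound $\norm{g}\ge\alpha$ off the solution set. It relies on the existence of the projection $\bar p$, which is guaranteed because $\gX_\star^f$ is nonempty, closed and convex under \Cref{assmp:pcc}. It also relies on $p \in \dom \partial f$, which is automatic for a proximal point. Everything else is a direct substitution, so I do not expect a genuine obstacle. The stated bound is, if anything, loose by the factor relating $f(x_k)-f_{\inf}$ to $\alpha t$.
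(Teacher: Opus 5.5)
Your proof is correct, but it takes a different route from the paper's.

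\textbf{The paper's route.} It argues by contrapositive using only function values. Comparing the prox objective at $p$ with its value at a minimizer gives $f(p)-f_{\inf}\le\frac{\lambda_k}{2}(d^2-s^2)$, where $d\eqdef\dist{x_k}{\gX_\star^f}$. Sharpness and the triangle inequality give $f(p)-f_{\inf}\ge\alpha(d-s)$. Dividing by $d-s>0$ yields $s+d\ge 2\alpha/\lambda_k$. The unknown $d$ is then replaced by the upper bound $(f(x_k)-f_{\inf})/\alpha$, and this is exactly where the threshold $2\alpha^2/(f(x_k)-f_{\inf}+\alpha t)$ comes from.

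\textbf{Your route.} You use the first-order condition $\lambda_k(x_k-p)\in\partial f(p)$. The sum rule needs no qualification here, because the quadratic term is finite and differentiable. You combine this with the classical fact that, under weak sharp minima, every subgradient at a point off $\gX_\star^f$ has norm at least $\alpha$.

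\textbf{What your route buys.} It proves a strictly stronger statement. Your case analysis shows that $\lambda_k\le\alpha/t$ already guarantees $s\ge t$, and this condition does not depend on $x_k$ at all. Since $f(x_k)-f_{\inf}\ge\alpha d>\alpha t$, the paper's threshold lies strictly below $\alpha/t$. The value $\alpha/t$ cannot be improved: for $f(x)=\alpha\abss{x}$ one has $s=\min\{d,\alpha/\lambda_k\}$. The same idea also extends to order $q>1$. There it gives $\lambda_k s\ge\alpha\,\distpow{q-1}{p}{\gX_\star^f}\ge\alpha(d-s)^{q-1}$, hence the threshold $\alpha(d-t)^{q-1}/t$. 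This again exceeds the $2\alpha(d-t)^{q-1}/(d+t)$ of \Cref{lemma:ettet}.

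\textbf{What the paper's route buys.} Its purely zeroth-order argument avoids subdifferential calculus altogether. The price is a more conservative threshold, and one that depends on the current iterate.
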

Combining this bound with \Cref{thm:linear-conv}, we obtain the following immediate corollary.
\begin{corollary}
   \label{col:thm:linear-conv}
   Let $f:\R^d \mapsto \R \cup \cbrac{+\infty}$ satisfy \Cref{assmp:pcc} and \Cref{assmp:sharp-minima} with parameters $\alpha > 0$ and $q = 1$.
   Let $\{x_k\}$ be the iterates of \Cref{eq:trppm-def} run with $t_k \equiv t > 0$ and parameter sequence $\{\lambda_k\}$ such that
   \begin{align*}
      0 < \lambda_k \leq \frac{2\alpha^2}{f(x_k) - f_{\inf} + \alpha t}.
   \end{align*}
   Then the following descent inequality holds:
   \begin{align*}
      f(x_K) - f_{\inf}
      \leq
      \rbrac{\frac{1}{1 + \nicefrac{t}{d_0}}}^K
      \rbrac{f(x_0) - f_{\inf}}.
   \end{align*}
\end{corollary}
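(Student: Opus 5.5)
The corollary follows by combining \Cref{lemma:simplifiedlambda} with the descent argument behind \Cref{thm:linear-conv}. The only difference from \Cref{thm:linear-conv} is how the regularization parameters are chosen. There they satisfy $\lambda_k \le \lambda_k^\star$; here they satisfy the explicit bound $\lambda_k \le 2\alpha^2/(f(x_k)-f_{\inf}+\alpha t)$. So the plan is to show that the explicit choice delivers the one property the proof of \Cref{thm:linear-conv} actually uses: at every iterate with $\dist{x_k}{\gX_\star^f} > t$, the unconstrained proximal displacement is at least $t$, i.e.
\begin{align*}
\norm{x_k - \prox{\nicefrac{f}{\lambda_k}}{x_k}} \geq t .
\end{align*}

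\textbf{Step 1.} Fix $k\le K-1$ with $\dist{x_k}{\gX_\star^f}>t$. Apply \Cref{lemma:simplifiedlambda} to obtain the displacement bound above. By the monotonicity of $\phi(\cdot,x_k)$ from \Cref{lemma:existence}, this is equivalent to $\lambda_k \le \lambda_k^\star$. We may therefore either invoke \Cref{thm:linear-conv} directly, or re-run its argument using only the displacement bound.

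\textbf{Step 2 (what the argument consists of).} Since the unconstrained prox lies outside $\gB_t(x_k)$, the constrained minimizer lies on the boundary, so $\norm{x_{k+1}-x_k}=t$. I would make this precise by convexity of the model $m_k$ along the segment from $x_k$ to the prox point. Next, I would use the subproblem optimality condition to show $x_{k+1}$ coincides with $\brox{t}{f}{x_k}$. Writing it as $0\in\partial f(x_{k+1})+\lambda_k(x_{k+1}-x_k)+\mu(x_{k+1}-x_k)$ with $\mu\ge0$ gives $x_k - x_{k+1} \in c\,\partial f(x_{k+1})$ with $c=1/(\lambda_k+\mu)>0$. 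This is exactly the optimality condition for the ball-constrained problem with active constraint.

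Let $x_\star = \ProjOn{\gX_\star^f}{x_k}$ and let $g = (x_k-x_{k+1})/c \in \partial f(x_{k+1})$. Convexity gives
\begin{align*}
f(x_{k+1}) - f_{\inf} \le \frac{1}{c}\inner{x_k-x_{k+1}}{x_{k+1}-x_\star}.
\end{align*}
From this, the standard \BPM\ computation should yield
\begin{align*}
f(x_{k+1})-f_{\inf} \le \frac{1}{1+t/\norm{x_{k+1}-x_\star}}\,\bigl(f(x_k)-f_{\inf}\bigr).
\end{align*}
Distances to $\gX_\star^f$ are nonincreasing along the iteration, so $\norm{x_{k+1}-x_\star}\le d_0$. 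Each step therefore contracts by $1/(1+t/d_0)$, and telescoping over $k=0,\dots,K-1$ gives the claim.

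\textbf{Step 3 (main obstacle).} The corollary is stated without the hypothesis $\dist{x_K}{\gX_\star^f}>t$ that appears in \Cref{thm:linear-conv}, so this hypothesis must be inherited implicitly. I would handle it by interpreting the bound, as in the theorem, for iterates outside the $t$-neighborhood. For this I need that once an iterate enters the neighborhood it never leaves, which follows because distances to the solution set decrease. Hence every $k<K$ also lies outside the neighborhood and Step 1 applies at each of them. Verifying this monotonicity of $\dist{x_k}{\gX_\star^f}$, and checking that $\lambda_k$ defined via $f(x_k)$ stays positive and compatible at each step, is where I expect the bookkeeping effort to lie. The rest is a direct citation.
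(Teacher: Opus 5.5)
Your proposal is correct and follows the paper's proof. The paper likewise applies \Cref{lemma:simplifiedlambda} to obtain $\norm{x_k - \prox{\nicefrac{f}{\lambda_k}}{x_k}} \geq t$ and then reruns the argument of \Cref{thm:linear-conv} unchanged, and your treatment of the unstated hypothesis $\dist{x_K}{\gX_\star^f} > t$ via nonincreasing distances makes explicit what the paper leaves implicit. One small correction: $\phi(\cdot, x_k)$ is only non-increasing, so the displacement bound is equivalent to $\lambda_k \le \lambda_k^\star$ only when $\lambda_k^\star$ is the largest root of $\phi(\lambda, x_k) = t$; your fallback of working directly with the displacement bound, which is all the proof of \Cref{thm:linear-conv} uses, is therefore the cleaner route.
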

We conclude by comparing this result with the classical convergence guarantees for {\PPM}.
\begin{remark}[Comparison with {\PPM}]
    Under \Cref{assmp:pcc}, it is well known \citep[Theorem 10.28]{beck2017first} that the iterates of {\PPM} with a constant step size $\lambda_k \equiv \lambda$ satisfy
    \begin{align}
    \label{eq:compconv1}
        f(x_K) - f_{\inf} \leq \frac{d_0^2}{2\sum_{k=0}^{K-1} \lambda_k^{-1}}= \frac{d_0^2}{2K\lambda^{-1}},
    \end{align}
    yielding an $\mathcal{O}(1/K)$ convergence rate in function values \citep{rockafellar1976monotone}.
    In particular, \Cref{eq:compconv1} indicates that {\PPM} does not admit a uniform linear convergence rate without additional assumptions, and forcing a linear decay via step size tuning would require $\lambda_k$ that shrink in a pathological manner (exponentially fast with respect to $K$), which in turn makes each proximal subproblem increasingly expensive to solve.
    In contrast, {\TRPPM} achieves linear convergence even with a fixed trust-region radius $t_k \equiv t > 0$, by choosing $\lambda_k$ within the admissible range.
    {\TRPPM} controls the per iteration effort by enforcing a trust-region constraint, thereby stabilizing the subproblems while still guaranteeing linear convergence.
\end{remark}

\subsection{Fixing $\lambda_k \equiv \lambda$}
\label{sec:fixlambda}

We now turn to the complementary regime in which the regularization parameter is held fixed, $\lambda_k \equiv \lambda$, while the trust-region radii ${t_k}$ are allowed to vary.
We begin by introducing a quantity that captures the minimal proximal displacement away from the solution set.
\begin{definition}
   \label{def:m-quantity}
   Let $f: \R^d \mapsto R \cup \cbrac{+\infty}$ satisfy \Cref{assmp:pcc} and fix any $x_\star^f \in \gX_\star^f$, $\varepsilon > 0$.
   We define 
   \begin{align*}
      m_f(\varepsilon, \lambda) \eqdef \min_{x \in \gS_\varepsilon} \norm{x - \prox{\nicefrac{f}{\lambda}}{x}},
   \end{align*}
   that is, the uniform lower bound of the displacement function $\phi(\lambda, x)$ over the set
   \begin{align*}
      \gS_\varepsilon \eqdef \gB_{\norm{x_0 - x_\star^f}}(x_\star^f) \cap \cbrac{x \in \R^d: \dist{x}{\gX_\star^f} \geq \varepsilon}.
   \end{align*}
\end{definition}
Although $\gS_\varepsilon$ depends on the initial point $x_0 \in \dom f$, we treat $x_0$ as fixed throughout the discussion to avoid notational clutter.

\begin{lemma}[Uniform lower bound]
   \label{lemma:uniform-lowerbound}
   Let $f: \R^d \mapsto \R \cup \cbrac{+\infty}$ satisfy \Cref{assmp:pcc}.
   Then
   \begin{align*}
      m_f(\varepsilon, \lambda) > 0.
   \end{align*}
\end{lemma}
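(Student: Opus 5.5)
The plan is a compactness-plus-continuity argument: show that the displacement $x \mapsto \phi(\lambda, x) = \norm{x - \prox{\nicefrac{f}{\lambda}}{x}}$ is continuous on $\R^d$ and that $\gS_\varepsilon$ is compact. Then the minimum in \Cref{def:m-quantity} is attained at some $\bar x \in \gS_\varepsilon$, and it remains to rule out $\phi(\lambda, \bar x) = 0$. Throughout I take $\lambda > 0$, so that we are in the nondegenerate branch of \Cref{def:displacement-func}. I also assume $\gS_\varepsilon \neq \emptyset$, which holds, e.g., when $\varepsilon \le d_0$ since then $x_0 \in \gS_\varepsilon$. If $\gS_\varepsilon$ is empty, the minimum over the empty set is $+\infty$ by convention and the claim is vacuous; I would state this convention explicitly.

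\textbf{Step 1 (continuity).} Under \Cref{assmp:pcc}, $\prox{\nicefrac{f}{\lambda}}{\cdot}$ is single valued on all of $\R^d$ and firmly nonexpansive, hence $1$-Lipschitz. Consequently $x \mapsto x - \prox{\nicefrac{f}{\lambda}}{x}$ is $2$-Lipschitz, and $\phi(\lambda,\cdot)$ is continuous. This also shows that the prox is well defined on the whole of $\gS_\varepsilon$, even at points outside $\dom f$.

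\textbf{Step 2 (compactness).} The set $\gX_\star^f$ is closed, since it is a sublevel set of the closed function $f$, and it is nonempty. Hence $x \mapsto \dist{x}{\gX_\star^f}$ is $1$-Lipschitz, and $\cbrac{x : \dist{x}{\gX_\star^f} \geq \varepsilon}$ is closed. Intersecting with the closed ball $\gB_{\norm{x_0 - x_\star^f}}(x_\star^f)$ gives a compact set $\gS_\varepsilon$. By Weierstrass, the infimum of $\phi(\lambda,\cdot)$ over $\gS_\varepsilon$ is attained at some $\bar x \in \gS_\varepsilon$. In particular, the $\min$ in \Cref{def:m-quantity} is justified.

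\textbf{Step 3 (positivity at the minimizer).} This is the only substantive point. Suppose $\phi(\lambda,\bar x) = 0$, i.e., $\bar x = \prox{\nicefrac{f}{\lambda}}{\bar x}$. The optimality condition of the proximal subproblem reads $0 \in \partial f(\bar x) + \lambda(\bar x - \bar x) = \partial f(\bar x)$. By convexity, this means $\bar x \in \gX_\star^f$. Equivalently, without subdifferentials: for any $z$ and $s \in (0,1)$, comparing the prox objective at $\bar x$ and at $\bar x + s(z - \bar x)$ and letting $s \downarrow 0$ gives $f(\bar x) \le f(z)$. Either way, $\dist{\bar x}{\gX_\star^f} = 0 < \varepsilon$, which contradicts $\bar x \in \gS_\varepsilon$. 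Hence $m_f(\varepsilon,\lambda) = \phi(\lambda,\bar x) > 0$.

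I expect no real obstacle beyond bookkeeping. The points that need care are: (i) recording that the fixed points of the prox are exactly the minimizers of $f$; and (ii) the nonemptiness convention for $\gS_\varepsilon$, since the statement is only meaningful when $\varepsilon \le d_0$.
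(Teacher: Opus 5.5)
Your proof is correct and follows the same route as the paper: compactness of $\gS_\varepsilon$, continuity of $\phi(\lambda,\cdot)$ from nonexpansiveness of the prox, Weierstrass, and then ruling out a zero minimum because fixed points of $\prox{\nicefrac{f}{\lambda}}{\cdot}$ are minimizers of $f$. You justify that last step explicitly, whereas the paper only asserts it. The only case you set aside is $\lambda = 0$, which the paper's argument also covers and which is immediate anyway, since $\phi(0,x) = \dist{x}{\gX_\star^f} \geq \varepsilon$ on $\gS_\varepsilon$.
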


Now, let us formalize the convergence guarantee.
\begin{theorem}
    \label{lemma:lnrate-fixed-lambda}
    Let $f:\R^d \mapsto \R \cup \cbrac{+\infty}$ satisfy \Cref{assmp:pcc} and let $\{x_k\}$ be the iterates of \Cref{eq:trppm-def} run with fixed $\lambda_k \equiv \lambda \geq 0$ and (possibly varying) trust-region radii $t_k \geq 0$.
   Then, for any $\varepsilon > 0$, as long as the iterates remain outside the $\varepsilon$-neighborhood of the solution set $\gX_\star^f$, one can choose
   \begin{align*}
      t_k = \theta \cdot m_f(\varepsilon, \lambda) > 0,
      \qquad
      \theta \in (0,1],
   \end{align*}
   where $m_f(\varepsilon, \lambda)$ is defined in \Cref{lemma:uniform-lowerbound}.
   Under this choice, the iterates satisfy the linear descent inequality
   \begin{align*}
      f(x_{k+1}) - f_{\inf}
      \leq
      \rbrac{\frac{1}{1 + \theta \cdot \nicefrac{m_f(\varepsilon, \lambda)}{d_0}}}
      \rbrac{f(x_k) - f_{\inf}} .
   \end{align*}
\end{theorem}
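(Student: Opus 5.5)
The plan is to show that, with the stated choice of $t_k$, every non-tail step of \TRPPM\ has the trust-region constraint active, so that it coincides with a \BPM\ step of radius $t_k$. The linear rate then follows from the usual convexity argument along the segment towards the solution set. We carry this out under the standing inductive hypothesis that the iterates stay in $\gB_{\norm{x_0 - x_\star^f}}(x_\star^f)$; together with $\dist{x_k}{\gX_\star^f} \geq \varepsilon$, this places $x_k \in \gS_\varepsilon$.

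\textbf{Step 1 (active constraint).} Since $x_k \in \gS_\varepsilon$, \Cref{def:m-quantity} and \Cref{lemma:uniform-lowerbound} give $\phi(\lambda, x_k) \geq m_f(\varepsilon,\lambda) \geq t_k$. The subproblem objective $m_k(z) = f(z) + \frac{\lambda}{2}\norm{z - x_k}^2$ is strongly convex when $\lambda>0$; for $\lambda=0$ we interpret the prox as in \Cref{def:displacement-func} and the argument is the \BPM\ one. Its unconstrained minimizer $p_k = \prox{\nicefrac{f}{\lambda}}{x_k}$ therefore lies outside or on the boundary of $\gB_{t_k}(x_k)$. By convexity of $m_k$ and uniqueness of its minimizer, the constrained minimizer $x_{k+1}$ lies on the boundary, i.e.\ $\norm{x_{k+1} - x_k} = t_k$. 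To see this, suppose $x_{k+1}$ were interior. Then it would be a local, hence global, minimizer of $m_k$, so $x_{k+1} = p_k$, which is impossible unless $\norm{p_k - x_k} = t_k$. Either way $\norm{x_{k+1}-x_k}=t_k$.

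\textbf{Step 2 (descent via a feasible comparison point).} Let $x_\star = \ProjOn{\gX_\star^f}{x_k}$ and $d_k = \norm{x_k - x_\star} \geq \varepsilon$. If $t_k \geq d_k$ the claim is easy, since $x_\star$ is feasible; we treat the main case $t_k < d_k$. Take $y = x_k + \frac{t_k}{d_k}(x_\star - x_k) \in \gB_{t_k}(x_k)$. Optimality of $x_{k+1}$ gives
\[
f(x_{k+1}) + \tfrac{\lambda}{2}t_k^2 \leq f(y) + \tfrac{\lambda}{2}t_k^2 \leq \big(1-\tfrac{t_k}{d_k}\big) f(x_k) + \tfrac{t_k}{d_k} f_{\inf},
\]
where the two regularization terms are equal by Step 1. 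This yields $f(x_{k+1}) - f_{\inf} \leq (1 - \nicefrac{t_k}{d_k})(f(x_k)-f_{\inf})$. To reach the sharper form $\frac{1}{1+t_k/d_k}$ I would instead follow the \BPM\ analysis. The first-order optimality condition on the ball gives $x_k - x_{k+1} = c\, g$ for some $g \in \partial f(x_{k+1})$ with $c > 0$, because the multiplier of the active constraint plus $\lambda$ is positive. Then
\[
f(x_{k+1}) - f_{\inf} \leq \inner{g}{x_{k+1} - x_\star'} \leq \norm{g}\, \dist{x_{k+1}}{\gX_\star^f},
\]
where $x_\star' = \ProjOn{\gX_\star^f}{x_{k+1}}$. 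Next, $f(x_k) - f(x_{k+1}) \geq \inner{g}{x_k - x_{k+1}} = \norm{g}\, t_k$. Dividing the two bounds gives $f(x_k)-f(x_{k+1}) \geq \frac{t_k}{\dist{x_{k+1}}{\gX_\star^f}}(f(x_{k+1})-f_{\inf})$. Finally, $\dist{x_{k+1}}{\gX_\star^f} \leq d_0$, so the rate $\frac{1}{1+\theta m_f(\varepsilon,\lambda)/d_0}$ follows after rearranging.

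\textbf{Step 3 (staying in the ball).} To close the induction we need $\norm{x_{k+1} - x_\star^f} \leq \norm{x_0 - x_\star^f}$. The update is $x_{k+1} = x_k - c\,g$ with $g\in\partial f(x_{k+1})$, and monotonicity gives $\inner{g}{x_{k+1}-x_\star^f}\geq f(x_{k+1})-f_{\inf}\geq 0$. Expanding $\norm{x_k - x_\star^f}^2 = \norm{x_{k+1} - x_\star^f + c g}^2$ shows $\norm{x_{k+1}-x_\star^f}\leq\norm{x_k-x_\star^f}$. This Fejér monotonicity also gives $\dist{x_{k+1}}{\gX_\star^f}\leq d_0$. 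The main obstacle I expect is Step 2's subgradient/KKT bookkeeping in the nonsmooth extended-valued setting. Existence of $g$ requires a constraint qualification, namely that $\dom f$ meets the interior of the ball, as in \BPM. If this fails, I would fall back to the cruder comparison-point bound $1 - t_k/d_0 \leq \frac{1}{1+t_k/d_0}$, which follows from $(1-s)(1+s)\leq 1$ and already implies the stated inequality.
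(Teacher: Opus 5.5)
Your proof is correct, and its main line is the paper's proof. You derive Fej\'er monotonicity from the optimality condition $(c_{t_k}(x_k)+\lambda)(x_k-x_{k+1})\in\partial f(x_{k+1})$; the paper does this by combining \Cref{lemma:variant} with the three-point identity, which is the same computation as your expansion of the square. Fej\'er monotonicity gives $x_k\in\gS_\varepsilon$, hence $\phi(\lambda,x_k)\ge m_f(\varepsilon,\lambda)\ge t_k$ and the constraint is active. The descent inequality then comes from testing that subgradient at $y=x_k$ and at a minimizer, with $\norm{x_{k+1}-x_\star^f}\le d_0$.

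Two points in your commentary need correcting. First, the comparison-point bound is not the ``cruder'' one but the stronger one. By Fej\'er monotonicity $d_k\le d_0$, so $1-t_k/d_k\le 1-t_k/d_0\le (1+t_k/d_0)^{-1}$, and that argument alone already proves the theorem. Second, the constraint qualification you worry about always holds. Since $x_k\in\dom f$ is the center of a ball of positive radius and $\operatorname{ri}(\dom f)$ is dense in $\dom f$, the relative interior of $\dom f$ meets the interior of $\gB_{t_k}(x_k)$. This matters because your fallback would not have avoided the issue anyway: Step 3 relies on the same optimality condition.
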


\begin{remark}
   In \citet[Corollary 8.2]{gruntkowska2025ball}, it is shown that under \Cref{assmp:pcc}, the iterates of {\BPM} satisfy
   \begin{align*}
      f(x_{k+1}) - f_{\inf} \leq \rbrac{1 + \frac{t_k}{d_0}}^{-1}\rbrac{f(x_k) - f_{\inf}}.
   \end{align*}
   Specializing \Cref{lemma:lnrate-fixed-lambda} to the case $\lambda = 0$ and $\theta = 1$ yields $m_f(\varepsilon,\lambda) = \varepsilon$. Hence, the two convergence results coincide when $t_k = \varepsilon$.
\end{remark}

\subsection{The quantity $m_f(\varepsilon, \lambda)$}
\label{sec:quantity-m}

To better understand the role of the quantity $m_f(\varepsilon, \lambda)$ in the convergence analysis, we now examine how it depends on the parameters $\varepsilon$ and $\lambda$. We begin by discussing several basic properties before illustrating them through explicit examples.

\begin{remark}[Edge case]
   When $\lambda = +\infty$, the proximal operator reduces to the identity mapping, $\prox{\nicefrac{f}{\lambda}}{x} \equiv x$.
   As a result, the displacement vanishes for all $x$, and hence $m_f(\varepsilon, \lambda)= 0$. 
\end{remark}

\begin{remark}[Monotonicity in $\varepsilon$ for fixed $\lambda$]
   The neighborhood radius $\varepsilon > 0$ directly influences the magnitude of the threshold $m_f(\varepsilon, \lambda)$.
   Intuitively, points that are further away from the solution set are expected to exhibit a larger displacement under the proximal mapping.
   This intuition can be made precise.
   For a fixed minimizer $x_\star^f\in\gX_\star^f$ and any $0 < \varepsilon_1 \leq \varepsilon_2$, we have $\gS_{\varepsilon_2} \subseteq \gS_{\varepsilon_1}$,
   which immediately implies the monotonicity property
   \begin{align*}
      m_f(\varepsilon_2, \lambda) \geq m_f(\varepsilon_1, \lambda).
   \end{align*}
\end{remark}

\begin{lemma}[Monotonicity]
   \label{lemma:undecided}
   Let $f:\R^d \mapsto \R \cup \cbrac{+\infty}$ satisfy \Cref{assmp:pcc}.
   For any fixed $\varepsilon > 0$, the function $m_f(\varepsilon, \lambda)$ is a non-increasing function of $\lambda$.
\end{lemma}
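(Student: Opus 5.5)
The plan is to reduce the claim to a pointwise statement and then pass it through the minimum. The set $\gS_\varepsilon$ depends only on $\varepsilon$, $x_0$ and $x_\star^f$, not on $\lambda$. So it suffices to show that for each fixed $x \in \gS_\varepsilon$ the displacement $\phi(\lambda, x) = \norm{x - \prox{\nicefrac{f}{\lambda}}{x}}$ is non-increasing in $\lambda$. Then, for $0 \le \lambda_1 \le \lambda_2$, we have $\phi(\lambda_2,x) \le \phi(\lambda_1,x)$ for every $x\in\gS_\varepsilon$, and taking the minimum over $\gS_\varepsilon$ on both sides gives $m_f(\varepsilon,\lambda_2) \le m_f(\varepsilon,\lambda_1)$.

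For the pointwise monotonicity with $0<\lambda_1<\lambda_2$, write $z_i = \prox{\nicefrac{f}{\lambda_i}}{x}$. Optimality of $z_1$ for $f + \frac{\lambda_1}{2}\norm{\cdot - x}^2$ gives
\begin{align*}
f(z_1) + \tfrac{\lambda_1}{2}\norm{z_1-x}^2 \le f(z_2) + \tfrac{\lambda_1}{2}\norm{z_2-x}^2 .
\end{align*}
Optimality of $z_2$ gives the symmetric inequality with $\lambda_2$:
\begin{align*}
f(z_2) + \tfrac{\lambda_2}{2}\norm{z_2-x}^2 \le f(z_1) + \tfrac{\lambda_2}{2}\norm{z_1-x}^2 .
\end{align*}
Adding the two inequalities cancels the $f$ terms (both finite, since $x\in\dom f$ makes the prox values lie in $\dom f$). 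This leaves $(\lambda_2-\lambda_1)\bigl(\norm{z_2-x}^2 - \norm{z_1-x}^2\bigr) \le 0$, hence $\phi(\lambda_2,x)\le\phi(\lambda_1,x)$. This is the standard two-point optimality swap and needs no smoothness.

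The step I expect to be the main obstacle is the degenerate endpoint $\lambda_1 = 0$, where $\prox{\nicefrac{f}{\lambda}}{x}$ is defined as the projection onto $\gX_\star^f$. I will handle it directly rather than through the swap argument. Let $p=\ProjOn{\gX_\star^f}{x}$ and $z=\prox{\nicefrac{f}{\lambda}}{x}$ with $\lambda>0$. Since $f(p) = f_{\inf} \le f(z)$, optimality of $z$ gives
\begin{align*}
\tfrac{\lambda}{2}\norm{z-x}^2 \le f(p)-f(z)+\tfrac{\lambda}{2}\norm{p-x}^2 \le \tfrac{\lambda}{2}\norm{p-x}^2 .
\end{align*}
Hence $\phi(\lambda,x)\le \dist{x}{\gX_\star^f}=\phi(0,x)$. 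The case $\lambda_1=\lambda_2$ is trivial. If the paper's convention $\lambda=+\infty$ is to be included, the edge-case remark gives $m_f=0$, which is consistent with non-increase.

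A final subtlety is that $m_f$ is written as a minimum. Lemma \ref{lemma:uniform-lowerbound} is taken to justify that this quantity is well defined. In any case, pointwise domination passes to infima, so the argument is valid whether the minimum is attained or interpreted as an infimum.
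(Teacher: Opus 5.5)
Your proof is correct and follows the paper's argument. Both show that $\phi(\cdot,x)$ is non-increasing for each fixed $x$ and then pass this through the minimum over the $\lambda$-independent set $\gS_\varepsilon$. The one difference is how you get the pointwise fact: you prove it directly via the two-point optimality swap, plus a direct comparison with $\ProjOn{\gX_\star^f}{x}$ at $\lambda=0$, whereas the paper cites Lemma~\ref{lemma:monotonicity-displacement}, which it proves through monotonicity of $\partial f$ and a quadratic inequality in the ratio of displacements. Your route is more elementary, and one small correction: $f(z_i)<+\infty$ follows from properness of $f$ alone, not from $x\in\dom f$, so your argument also covers points of $\gS_\varepsilon$ outside $\dom f$.
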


\begin{lemma}[Continuity]
   \label{lemma:cont-m}
   Let $f:\R^d \mapsto \R \cup \cbrac{+\infty}$ satisfy \Cref{assmp:pcc}.
   For any fixed $\varepsilon > 0$, $m_f(\varepsilon, \lambda)$ is a continuous function of $\lambda \geq 0$.
\end{lemma}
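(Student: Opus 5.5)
The plan is to write $m_f(\varepsilon,\lambda)=\min_{x\in\gS_\varepsilon}\phi(\lambda,x)$ as the minimum of a family of functions over the fixed compact set $\gS_\varepsilon$. I will show that $\lambda\mapsto\phi(\lambda,\cdot)$ is continuous in the sup-norm on $\gS_\varepsilon$. Continuity of the minimum then follows from the elementary inequality
\begin{align*}
\abss{\min_{\gS_\varepsilon}\phi(\lambda,\cdot)-\min_{\gS_\varepsilon}\phi(\mu,\cdot)} \le \sup_{x\in\gS_\varepsilon}\abss{\phi(\lambda,x)-\phi(\mu,x)}.
\end{align*}
The set $\gS_\varepsilon$ is compact because it is the intersection of a closed ball with the closed set $\{x:\dist{x}{\gX_\star^f}\ge\varepsilon\}$. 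If $\gS_\varepsilon$ is empty, the statement is vacuous.

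\textbf{Step 1: uniform Lipschitz continuity in $x$.} For each $\lambda\ge0$, the map $x\mapsto x-\prox{\nicefrac{f}{\lambda}}{x}$ is firmly nonexpansive, being the complement of a firmly nonexpansive map. At $\lambda=0$ the relevant map is the projection onto the closed convex set $\gX_\star^f$, which is also firmly nonexpansive. Hence $\phi(\lambda,\cdot)$ is $1$-Lipschitz in $x$, uniformly in $\lambda$. With this equicontinuity, pointwise continuity in $\lambda$ for each fixed $x$ upgrades to uniform convergence on the compact set $\gS_\varepsilon$, by a standard finite $\delta$-net argument. So it suffices to prove continuity of $\lambda\mapsto\prox{\nicefrac{f}{\lambda}}{x}$ for each fixed $x$.

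\textbf{Step 2: continuity at $\lambda>0$.} Write $\gamma=1/\lambda$ and $p_\gamma=\prox{\gamma f}{x}$. The optimality conditions are $(x-p_\gamma)/\gamma\in\partial f(p_\gamma)$ and $(x-p_\mu)/\mu\in\partial f(p_\mu)$. Using monotonicity of $\partial f$, the resolvent identity gives
\begin{align*}
\norm{p_\gamma-p_\mu}\le \abss{1-\nicefrac{\mu}{\gamma}}\,\norm{x-p_\gamma}.
\end{align*}
This tends to $0$ as $\mu\to\gamma$, which gives continuity for every $\lambda\in(0,\infty)$.

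\textbf{Step 3: continuity at $\lambda=0$, the main obstacle.} Here I must show that $p_\gamma\to\ProjOn{\gX_\star^f}{x}$ as $\gamma\to\infty$. Let $\bar x=\ProjOn{\gX_\star^f}{x}$. Comparing the proximal objective at $p_\gamma$ with its value at $\bar x$ gives
\begin{align*}
f(p_\gamma)+\tfrac{1}{2\gamma}\norm{p_\gamma-x}^2\le f_{\inf}+\tfrac{1}{2\gamma}\norm{\bar x-x}^2.
\end{align*}
Two consequences follow. First, since $f(p_\gamma)\ge f_{\inf}$, we get $\norm{p_\gamma-x}\le\norm{\bar x-x}$, so the family $p_\gamma$ is bounded. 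Second, $f(p_\gamma)\le f_{\inf}+\norm{\bar x-x}^2/(2\gamma)\to f_{\inf}$. Take any cluster point $p$ of $p_\gamma$ as $\gamma\to\infty$. By lower semicontinuity of $f$, $p\in\gX_\star^f$, and by the first consequence $\norm{p-x}\le\dist{x}{\gX_\star^f}$. Uniqueness of the projection onto a closed convex set then forces $p=\bar x$, so the whole family converges to $\bar x$. Combining Steps 1--3 gives continuity of $m_f(\varepsilon,\cdot)$ on $[0,\infty)$.
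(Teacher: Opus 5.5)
Your proof is correct, but it takes a different route from the paper's.

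\textbf{The paper's route.} The paper argues by sequences.
For the $\liminf$ bound, it selects minimizers $x_{n_k}\in\gS_\varepsilon$ of $\phi(\lambda_{n_k},\cdot)$ and extracts $x_{n_k}\to\bar x$. It then proves the \emph{joint} limit $\phi(\lambda_{n_k},x_{n_k})\to\phi(\lambda,\bar x)$ by passing to the limit in the prox optimality inequality. That step uses lower semicontinuity of $f$ and uniqueness of the prox, with a separate case at $\lambda=0$ showing that cluster points lie in $\gX_\star^f$.
For the $\limsup$ bound, it fixes a minimizer $\hat x$ of $\phi(\lambda,\cdot)$ and uses pointwise continuity in $\lambda$.

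\textbf{Your route.} You decouple the two variables.
Step 1 gives a $1$-Lipschitz bound in $x$ that is uniform in $\lambda$, via firm nonexpansiveness of $I-\operatorname{prox}$ and of $I-\operatorname{Proj}$. This is sharper than the paper's $2$-Lipschitz estimate.
That equicontinuity upgrades pointwise continuity in $\lambda$ to uniform convergence of $\phi(\lambda_n,\cdot)$ on the compact set $\gS_\varepsilon$. The $1$-Lipschitz dependence of a minimum on the sup-norm then finishes the proof.

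\textbf{Comparison.} Your argument is shorter and avoids both the selection of minimizers along subsequences and the joint-continuity casework.
Your Step 2 is also quantitative. On $\gS_\varepsilon$ you have $\phi(\lambda,x)\le\norm{x-x_\star^f}\le\norm{x_0-x_\star^f}$, so the resolvent estimate alone gives
\[
\abss{m_f(\varepsilon,\lambda)-m_f(\varepsilon,\lambda')}\le\abss{1-\nicefrac{\lambda}{\lambda'}}\,\norm{x_0-x_\star^f}\quad\text{for } \lambda,\lambda'>0.
\]
This is local Lipschitz continuity on $(0,\infty)$, so the equicontinuity step is really needed only at $\lambda=0$.
Your Step 3 also proves the stronger fact $\prox{\nicefrac{f}{\lambda}}{x}\to\ProjOn{\gX_\star^f}{x}$ as $\lambda\to0^+$. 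The paper's limit lemma gives only convergence of the displacement norms.
The paper's argument, in contrast, uses only compactness, lower semicontinuity and uniqueness of the prox, with no nonexpansiveness structure. It pays for that with more bookkeeping.
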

When the objective function $f$ admits a simple structure, the quantity $m_f(\varepsilon, \lambda)$ can often be computed in closed form.
\begin{example}[Indicator function]
   Let $f(x) = \iota_{\gC}(x)$, where $\gC \subseteq \R^d$ is a closed convex set.
   In this case, the proximal operator reduces to the Euclidean projection onto~$\gC$, namely,
   \begin{align*}
      \prox{\nicefrac{f}{\lambda}}{x} = \ProjOn{\gC}{x}.
   \end{align*}
   Consequently, the displacement function simplifies to
   \begin{align*}
      \phi(\lambda, x)
      = \norm{x - \ProjOn{\gC}{x}}
      = \dist{x}{\gC}.
   \end{align*}
   By definition of $\gS_\varepsilon$, it follows that $m_f(\varepsilon, \lambda) = \varepsilon$.
\end{example}

\begin{example}
   Let $f(x) = \abss{x}$. The proximal map for this function is
   \begin{align*}
      \prox{\nicefrac{f}{\lambda}}{x} = \sign(x) \cdot \max \cbrac{\abss{x} - \frac{1}{\lambda}, 0},
   \end{align*} 
   and the associated displacement function is
   \begin{align*}
      \phi(\lambda, x) = \abss{x - \prox{\nicefrac{f}{\lambda}}{x}} = \min \cbrac{\abss{x}, \frac{1}{\lambda}}.
   \end{align*}
   Thus, restricting to $\gS_\varepsilon = \cbrac{\varepsilon \leq \abss{x} \leq \abss{x_0}}$, we have  
   \begin{align*}
      m_f(\varepsilon, \lambda) = \min \cbrac{\varepsilon, \frac{1}{\lambda}}.
   \end{align*}
\end{example}

\begin{example}
   Let $f(x) = \frac{\mu}{2}x^2$ for some $\mu > 0$. Then 
   \begin{align*}
      \prox{\nicefrac{f}{\lambda}}{x}  = \frac{\lambda}{\lambda + \mu}x,
   \end{align*}
   and the displacement function is given by 
   \begin{align*}
      \abss{x - \prox{\nicefrac{f}{\lambda}}{x}} = \frac{\mu}{\lambda + \mu}\abss{x}.
   \end{align*}
   Therefore, 
   \begin{align*}
      m_f(\varepsilon, \lambda) = \min_{\varepsilon \leq \abss{x} \leq \abss{x_0}} \frac{\mu}{\lambda + \mu}\abss{x} = \frac{\mu}{\lambda + \mu}\varepsilon.
   \end{align*}
\end{example}

The preceding example extends naturally to higher-dimensional quadratic objectives.

\begin{lemma}[Simple Quadratics]
   \label{lemma:simple-quadatics}
   Consider the function $f(x) = \frac{1}{2}x^{\top}\mQ x$, where $\mQ \succeq \mO$ and $\mQ \neq \mO$.
   Then 
   \begin{align*}
      m_f(\varepsilon, \lambda) = \frac{\sigma_+}{\sigma_+ + \lambda}\varepsilon,
   \end{align*}
   where $\sigma_+$ is the smallest positive eigenvalue of $\mQ$.
\end{lemma}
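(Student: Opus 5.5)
The plan is to compute the proximal map in closed form and then diagonalize $\mQ$. Since $f(x)=\frac{1}{2}x^\top \mQ x$ with $\mQ\succeq\mO$, its minimizer set is $\gX_\star^f=\ker\mQ$ and $f_{\inf}=0$. For $\lambda>0$, the optimality condition $\mQ z+\lambda(z-x)=0$ gives $\prox{\nicefrac{f}{\lambda}}{x}=\lambda(\mQ+\lambda\mI)^{-1}x$. Hence the displacement is
\begin{align*}
   x-\prox{\nicefrac{f}{\lambda}}{x} = \mQ(\mQ+\lambda\mI)^{-1}x .
\end{align*}
For $\lambda=0$, \Cref{def:displacement-func} uses $\ProjOn{\ker\mQ}{x}$, so the displacement is the component of $x$ in $\operatorname{range}\mQ$. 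The same formula then holds formally with $\lambda=0$, since $\mQ\mQ^{+}$ is the orthogonal projector onto $\operatorname{range}\mQ$.

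Write $\mQ=\mU\operatorname{diag}(\sigma_1,\dots,\sigma_d)\mU^\top$ with orthonormal eigenvectors $u_i$, and split $x=x_N+x_R$ with $x_N\in\ker\mQ$ and $x_R\in\operatorname{range}\mQ=(\ker\mQ)^\perp$. Then $\dist{x}{\gX_\star^f}=\norm{x_R}$. Writing $c_i=\inner{u_i}{x}$, we get
\begin{align*}
   \norm{\mQ(\mQ+\lambda\mI)^{-1}x}^2=\sum_{i:\sigma_i>0}\Big(\frac{\sigma_i}{\sigma_i+\lambda}\Big)^2 c_i^2 .
\end{align*}
The map $s\mapsto s/(s+\lambda)$ is nondecreasing on $s>0$, so each factor is at least $\sigma_+/(\sigma_++\lambda)$. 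This yields the lower bound
\begin{align*}
   \phi(\lambda,x)\ge\frac{\sigma_+}{\sigma_++\lambda}\norm{x_R}=\frac{\sigma_+}{\sigma_++\lambda}\dist{x}{\gX_\star^f}\ge\frac{\sigma_+}{\sigma_++\lambda}\varepsilon
\end{align*}
for every $x\in\gS_\varepsilon$. The containing ball in $\gS_\varepsilon$ plays no role in this step.

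For attainment, let $u$ be a unit eigenvector for $\sigma_+$ and set $\bar x=x_\star^f+\varepsilon u$. Since $x_\star^f\in\ker\mQ$ and $u\perp\ker\mQ$, we have $\dist{\bar x}{\gX_\star^f}=\varepsilon$ and $\phi(\lambda,\bar x)=\frac{\sigma_+}{\sigma_++\lambda}\varepsilon$. It remains to check that $\bar x$ lies in the ball $\gB_{\norm{x_0-x_\star^f}}(x_\star^f)$, i.e.\ that $\varepsilon\le\norm{x_0-x_\star^f}$.

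This membership is the one delicate point. It holds whenever $\gS_\varepsilon$ is nonempty: any $x\in\gS_\varepsilon$ satisfies $\varepsilon\le\dist{x}{\gX_\star^f}\le\norm{x-x_\star^f}\le\norm{x_0-x_\star^f}$. Nonemptiness is implicitly required for $m_f(\varepsilon,\lambda)$ to be a well-defined minimum in \Cref{def:m-quantity}, so I will state it explicitly as the standing assumption. With it, the lower bound is attained, and the minimum equals $\frac{\sigma_+}{\sigma_++\lambda}\varepsilon$.
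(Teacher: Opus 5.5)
Your proof is correct and follows essentially the same route as the paper: the closed-form prox $\lambda(\mQ+\lambda\mI)^{-1}x$, a spectral decomposition, the bound $\sigma_i/(\sigma_i+\lambda)\ge\sigma_+/(\sigma_++\lambda)$ on the components in the range of $\mQ$, and attainment at a $\sigma_+$-eigenvector. Your single argument also covers the paper's separate $\mQ\succ\mO$ case.

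Your witness $x_\star^f+\varepsilon u$, together with the explicit condition $\varepsilon\le\norm{x_0-x_\star^f}$, is slightly more careful than the paper's choice of an eigenvector with $\norm{x}=\varepsilon$. That point need not lie in the ball $\gB_{\norm{x_0-x_\star^f}}(x_\star^f)$ when the fixed minimizer $x_\star^f\neq 0$. Your treatment also covers $\lambda=0$, which the paper's proof omits.
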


\begin{remark}[Comparison with {\PPM}]
   For \TRPPM, fixing the regularization parameter $\lambda$ and selecting the trust-region radius according to \Cref{lemma:lnrate-fixed-lambda} yields the linear convergence bound
   \begin{align*}
      f(x_K) - f_{\inf} \leq \rbrac{\frac{1}{1 + \theta \cdot \nicefrac{m_f(\varepsilon,\lambda)}{d_0}}}^K
      \rbrac{f(x_0) - f_{\inf}}
   \end{align*}
   as long as the iterates remain outside the $\varepsilon$-neighborhood of the solution set.
   Here, this linear rate is achieved by stabilizing the iterates through the trust-region constraint, rather than by forcing the regularization parameter to vary.
\end{remark}

\subsection{Strongly Convex Case}
\label{sec:stronglyconvex}

The preceding discussion shows that, under the general convexity conditions of \Cref{assmp:pcc}, achieving linear convergence requires the introduction of a trust-region constraint.
However, it is well known that the classical {\PPM} already converges linearly when $f$ is strongly convex.
This naturally raises the question of whether our theory reflects the fact that a trust-region mechanism is unnecessary in the strongly convex setting.
As we show below, the answer is negative: when $f$ is strongly convex, the trust-region constraint becomes redundant, and linear convergence is guaranteed even in its absence.
\begin{theorem}
   \label{thm:combination-complicated}
   Let $f: \R^d \mapsto \R \cup \cbrac{+ \infty}$ satisfy \Cref{assmp:pcc} and let $\{x_k\}$ be the iterates of \Cref{eq:trppm-def} run with some sequences $\cbrac{\lambda_k}$ and $\cbrac{t_k}$. Then

    \begin{enumerate}
    \item For any chosen $t_k$ and $\lambda_k$, we have 
    \begin{align*}
        f(x_{k+1}) - f_{\inf} \leq q_k \rbrac{ f(x_k) - f_{\inf} },
    \end{align*}
    where $q_k = \rbrac{1 + \frac{\norm{x_k - x_{k+1}}}{\norm{x_{k+1} - x_\star^f}}}^{-1}$.

    \item If we further assume that $f$ is $\mu$-strongly convex, we have 
    \begin{align*}
        \gE_K \leq \prod_{k=0}^{K-1} \rbrac{1 + \min\cbrac{\frac{t_k}{d_0}, \frac{\mu}{\lambda_k}}}^{-1} \cdot \gE_0,
    \end{align*}
    where $\gE_K \eqdef f(x_K) - f_{\inf}$ and $K$ is the total number of iterations.
    \end{enumerate}
\end{theorem}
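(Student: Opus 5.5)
Part 1 follows from the optimality condition of the trust-region subproblem combined with convexity. Write $g_k \eqdef x_k - x_{k+1}$ and $x_\star \eqdef x_\star^f$ (the minimizer fixed in the statement). The constraint $\gB_{t_k}(x_k)$ has nonempty interior, so the usual constraint qualification holds and we get
\begin{align*}
0 \in \partial f(x_{k+1}) + \lambda_k (x_{k+1}-x_k) + N_{\gB_{t_k}(x_k)}(x_{k+1}).
\end{align*}
The normal cone at $x_{k+1}$ is $\{\nu (x_{k+1}-x_k):\nu\ge 0\}$ if the constraint is active and $\{0\}$ otherwise. In both cases there is a subgradient $u_k \in \partial f(x_{k+1})$ with
\begin{align*}
u_k = c_k (x_k - x_{k+1}), \qquad c_k = \lambda_k + \nu_k \ge 0.
\end{align*}
This is the key structural fact: the subgradient at the new point is a nonnegative multiple of the step.

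Next, apply convexity at $x_\star$ together with Cauchy--Schwarz:
\begin{align*}
f(x_{k+1}) - f_{\inf} \le \inner{u_k}{x_{k+1}-x_\star} \le c_k\norm{g_k}\norm{x_{k+1}-x_\star}.
\end{align*}
Convexity at $x_k$ gives
\begin{align*}
f(x_k) \ge f(x_{k+1}) + \inner{u_k}{x_k - x_{k+1}} = f(x_{k+1}) + c_k\norm{g_k}^2.
\end{align*}
Assume $g_k \neq 0$ and $x_{k+1}\neq x_\star$; otherwise the claim is trivial or $q_k$ is interpreted as $0$ or $1$. Then
\begin{align*}
c_k\norm{g_k}^2 \ge \frac{\norm{g_k}}{\norm{x_{k+1}-x_\star}}\bigl(f(x_{k+1})-f_{\inf}\bigr),
\end{align*}
so $f(x_k)-f_{\inf} \ge \bigl(1+\norm{g_k}/\norm{x_{k+1}-x_\star}\bigr)\bigl(f(x_{k+1})-f_{\inf}\bigr)$. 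This is exactly Part 1.

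For Part 2, I need to lower bound the ratio $\norm{g_k}/\norm{x_{k+1}-x_\star}$ by $\min\{t_k/d_0,\ \mu/\lambda_k\}$. I split into the same two cases as above.

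\emph{Active constraint.} Here $\norm{g_k}=t_k$. I also need $\norm{x_{k+1}-x_\star}\le d_0$. Under strong convexity the minimizer is unique, and the iterates should be Fejér monotone: TRPPM with $\lambda=0$ is BPM, which is known to be Fejér monotone. The same argument works here, since $\inner{u_k}{x_{k+1}-x_\star}\ge 0$ and $u_k$ is parallel to $g_k$, so
\begin{align*}
\norm{x_k-x_\star}^2 = \norm{x_{k+1}-x_\star}^2 + \norm{g_k}^2 + 2\inner{g_k}{x_{k+1}-x_\star} \ge \norm{x_{k+1}-x_\star}^2.
\end{align*}
By induction $\norm{x_{k+1}-x_\star}\le d_0$, which gives the ratio $\ge t_k/d_0$.

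\emph{Inactive constraint.} Here $u_k=\lambda_k g_k$. Strong monotonicity of $\partial f$ together with $0\in\partial f(x_\star)$ gives
\begin{align*}
\mu\norm{x_{k+1}-x_\star}^2 \le \inner{u_k}{x_{k+1}-x_\star} \le \lambda_k\norm{g_k}\norm{x_{k+1}-x_\star},
\end{align*}
so the ratio is $\ge \mu/\lambda_k$.

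Strong monotonicity also holds in the active case, with $c_k$ in place of $\lambda_k$, but $c_k$ is not controlled, so the active case must use $t_k$ instead. In either case $q_k \le \bigl(1+\min\{t_k/d_0,\mu/\lambda_k\}\bigr)^{-1}$. Multiplying over $k=0,\dots,K-1$ finishes the proof.

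The main obstacle is the Fejér monotonicity step in the active case, since the factor $t_k/d_0$ relies on it. If it caused trouble, I would fall back on bounding $\norm{x_{k+1}-x_\star}$ using the sublevel-set boundedness that follows from strong convexity.
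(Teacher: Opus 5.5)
Your proof is correct and matches the paper's argument: Part 1 comes from the multiplier representation $(c_k+\lambda_k)(x_k-x_{k+1})\in\partial f(x_{k+1})$ (the paper's variant of the second brox theorem) tested at $x_k$ and at $x_\star^f$, and Part 2 uses the same active/inactive split, with Fej\'er monotonicity giving the factor $t_k/d_0$ and strong monotonicity of $\partial f$ together with $0\in\partial f(x_\star^f)$ giving $\mu/\lambda_k$. The only cosmetic difference is that you split on whether $x_{k+1}$ lies on the boundary or in the interior of the ball, rather than on $t_k$ versus $\phi(\lambda_k,x_k)$, which avoids needing $\norm{x_k-x_{k+1}}=\min\{t_k,\phi(\lambda_k,x_k)\}$; the degenerate case $c_k=0$ in your Fej\'er step is harmless, since then $x_{k+1}$ minimizes $f$ and equals $x_\star^f$ by uniqueness.
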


We now interpret the implications of \Cref{thm:combination-complicated} in both the convex and strongly convex regimes.

\textbullet~ In the convex case, if we guarantee that $t_k \leq \norm{x_k - \prox{\nicefrac{f}{\lambda_k}}{x_k}}$ in each iteration, then by the first brox theorem (\cref{fact:first-brox}), the next iterate is located on the boundary of the ball, which guarantees that $\norm{x_k - x_{k+1}} = t_k$.
Consequently, we obtain the inequality
\begin{align*}
   1 + \frac{\norm{x_k - x_{k+1}}}{\norm{x_{k+1} - x_\star^f}} \geq 1 + \frac{t_k}{d_0}.
\end{align*}
Linear convergence then follows from the fact that the sequence ${t_k}$ can be uniformly lower bounded on the set $\gS_\varepsilon$ when $\lambda_k \equiv \lambda$ is fixed.

\textbullet~ In the strongly convex case, we always have 
\begin{align*}
   \norm{x_k - x_{k+1}} = \min \cbrac{t_k, \norm{x_k - \prox{\nicefrac{f}{\lambda}}{x_k}}}.
\end{align*}
As in the proof of \Cref{thm:combination-complicated}, 
\begin{align*}
   1 + \frac{\norm{x_k - x_{k+1}}}{\norm{x_{k+1} - x_\star^f}} \geq \min \cbrac{1 + \frac{t_k}{d_0}, 1 + \frac{\mu}{\lambda_k}}.
\end{align*}
In particular, even if we set $t_k = +\infty$ (corresponding to the absence of a trust-region constraint), the quantity on the right-hand side remains uniformly lower bounded by $1 + \nicefrac{\mu}{\lambda_k}$.
This explains why, under strong convexity, the trust-region constraint is no longer necessary for achieving linear convergence.

\section{Conclusion}
\label{sec:conclusion}
In this work, we studied a trust-region stabilized proximal point framework and provided a unified analysis of its convergence behavior under minimal convexity assumptions.
By explicitly controlling the step size through the joint choice of the trust-region radius and the regularization parameter, we showed that linear descent can be obtained outside an arbitrary  prescribed neighborhood of the solution set without requiring strong convexity.
Our analysis clarified the distinct roles of trust-region constraints and quadratic regularization, established two complementary mechanisms for entering the linear regime, and revealed an exact equivalence between {\TRPPM} and {\BPM} in the active constraint setting (refer to \Cref{sec:mechanism}).


\section{Acknowledgements}
This work was supported by funding from King Abdullah University of Science and Technology (KAUST): i) KAUST Baseline Research Scheme, ii) Center of Excellence for Generative AI (award no. 5940). K.G. is supported by the Google PhD Fellowship.

\newpage




\bibliography{example_paper}
\bibliographystyle{icml2026}

\newpage
\appendix
\onecolumn

\section{Notation}
\label{sec:notation}
We work in $\R^d$ equipped with the standard inner product $\inner{\cdot}{\cdot}$ and norm $\norm{\cdot}$; for $x\in\R^d$ and $t\ge0$, $\gB_t(x)\eqdef\cbrac{z:\norm{z-x}\le t}$ denotes the closed Euclidean ball.
The objective $f:\R^d\to\R\cup\cbrac{+\infty}$ is assumed proper, closed and convex, with optimal value $f_{\inf}$ and solution set $\gX_\star^f\eqdef\argmin f$.
The subdifferential of $f$ at $x$ is denoted as $\partial f(x)$.
For a set $\gC$, $\dist{x}{\gC}$ denotes the Euclidean distance.
For any nonempty, closed, and convex set $\gC$, we define its indicator function $\iota_{\gC}: \R^d \to \R \cup \{+\infty\}$ by
\begin{align*}
    \iota_{\gC}(z) \eqdef
    \begin{cases}
        0 & \text{if } z\in \gC, \\
        +\infty & \text{otherwise}.
    \end{cases}
\end{align*}
This function is proper, closed, and convex.
The projection onto $\gC$ is $\ProjOn{\gC}{x}$.
For $\lambda>0$ the proximal operator is $\prox{\nicefrac{f}{\lambda}}{x}\eqdef\argmin_z\cbrac{f(z)+\frac{\lambda}{2}\norm{z-x}^2}$, with the convention $\prox{\nicefrac{f}{\lambda}}{x}=\ProjOn{\gX_\star^f}{x}$ when $\lambda=0$. 
The broximal operator with radius $t>0$ is $\brox{t}{f}{x}\eqdef\argmin_{z\in\gB_t(x)}f(z)$. 

Algorithm iterates are denoted by $\cbrac{x_k}$; {\PPM} uses $$x_{k+1}=\prox{\gamma_k f}{x_k} = \argmin_{z \in \R^d} \cbrac{f(z) + \frac{1}{2\gamma_k}\norm{z - x_k}^2},$$ while {\TRPPM} uses $$x_{k+1}=\argmin_{z\in\gB_{t_k}(x_k)}\cbrac{f(z)+\frac{\lambda_k}{2}\norm{z-x_k}^2}.$$ 
The displacement function is $\phi(\lambda,x)\eqdef\norm{x-\prox{\nicefrac{f}{\lambda}}{x}}$, and for $\varepsilon>0$, $m_f(\varepsilon,\lambda)$ denotes its minimum over points at distance at least $\varepsilon$ from $\gX_\star^f$ within the initial level set.

Finally, $\Exp{\cdot}$ denotes expectation, and $\xi_k$ denotes a (possibly stochastic) perturbation of the objective or model at iteration $k$.

\section{Facts and Lemmas}

\subsection{Facts}
\begin{fact}
   \label{fact:mono}
   If $f$ is proper and convex, then $\partial f$ is monotone.
\end{fact}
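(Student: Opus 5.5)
The statement to prove is Fact~\ref{fact:mono}: if $f$ is proper and convex, then $\partial f$ is monotone. That is, for all $x,y$ and all $g_x\in\partial f(x)$, $g_y\in\partial f(y)$,
\begin{align*}
\inner{g_x - g_y}{x - y} \geq 0.
\end{align*}
The plan is to use only the definition of the subdifferential and add two subgradient inequalities.

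If $\partial f(x)$ or $\partial f(y)$ is empty, the claim is vacuous. So fix $g_x\in\partial f(x)$ and $g_y\in\partial f(y)$. Since $f$ is proper, a nonempty subdifferential at a point forces that point into $\dom f$. Indeed, if $f(x)=+\infty$, the inequality $f(z)\geq f(x)+\inner{g_x}{z-x}$ would give $f(z)=+\infty$ for every $z$, contradicting properness. Hence $f(x)$ and $f(y)$ are both finite.

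By the subgradient inequality at $x$ evaluated at $z=y$, and at $y$ evaluated at $z=x$,
\begin{align*}
f(y) &\geq f(x) + \inner{g_x}{y-x}, \\
f(x) &\geq f(y) + \inner{g_y}{x-y}.
\end{align*}
Both values are finite, so adding the two inequalities is legitimate. After cancelling $f(x)+f(y)$ we get
\begin{align*}
0 \geq \inner{g_x}{y-x} + \inner{g_y}{x-y} = -\inner{g_x-g_y}{x-y},
\end{align*}
which is exactly the monotonicity inequality.

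The only point that needs care is ruling out $\infty-\infty$ when adding the inequalities, and the properness argument above handles it. Convexity is used only through the subgradient inequality itself; no closedness is required.
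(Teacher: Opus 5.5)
The paper records this as a standard fact without proof. Your argument is the standard textbook one and is correct: you use properness to place both points in $\dom f$ and then add the two subgradient inequalities.

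One small correction to your closing remark: convexity is not used anywhere, since the subgradient inequality is the definition of $\partial f$. Your argument in fact shows that the global subdifferential of any proper function is monotone.
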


\begin{fact}[First Brox Theorem \citep{gruntkowska2025ball}]
   \label{fact:first-brox}
   Let $f:\R^d \mapsto \R \cup \cbrac{+\infty}$ satisfy \Cref{assmp:pcc} and choose $x \in \dom f$.
   Then
   \begin{enumerate}[label=(\roman*)]
        \item $\brox{t}{f}{x} \neq \emptyset$. 
   In addition, if $\gB_t(x) \cap \gX_f^\star \neq \emptyset$, then $\brox{t}{f}{x}$ is a non-empty subset of $\gX_f^\star$.
        \item If $\gB_t(x) \cap \gX_f^\star \neq \emptyset$, then $\brox{t}{f}{x}$ is a singleton lying on the boundary of $\gB_t(x)$.
    \end{enumerate}
\end{fact}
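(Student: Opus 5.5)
The proof uses only lower semicontinuity, convexity of $f$, and strict convexity of the Euclidean ball. One remark first: item (ii) as printed has the hypothesis $\gB_t(x) \cap \gX_f^\star \neq \emptyset$, which contradicts (i). If the ball meets the solution set, the minimizers may lie in the interior and need not be unique (take $f \equiv 0$). So I read (ii) with the intended hypothesis $\gB_t(x) \cap \gX_f^\star = \emptyset$, which is how the result is used in the discussion after \Cref{thm:combination-complicated}.

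For (i), I would consider the restricted function $g \eqdef f + \iota_{\gB_t(x)}$. It is proper, since $x \in \dom f \cap \gB_t(x)$. It is closed, as the sum of two closed functions, and convex. Its domain is bounded, so its sublevel sets are compact, and by Weierstrass' theorem $\argmin g = \brox{t}{f}{x}$ is nonempty. If moreover some $x_\star \in \gX_f^\star \cap \gB_t(x)$ exists, then $\min g \le f(x_\star) = f_{\inf} \le f(z)$ for all $z$. Hence every minimizer of $g$ attains $f_{\inf}$ and so belongs to $\gX_f^\star$.

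For (ii), assume $\gB_t(x) \cap \gX_f^\star = \emptyset$ and let $z \in \brox{t}{f}{x}$. I first show $z$ lies on the boundary. Suppose instead $\norm{z - x} < t$. Then $z$ minimizes $f$ over an open neighbourhood of itself, so it is a local minimizer of the convex function $f$. To see that it is global, take any $y$: for small $s > 0$ the point $z + s(y - z)$ lies in the ball, so $f(z) \le f(z + s(y-z)) \le (1-s) f(z) + s f(y)$, which gives $f(z) \le f(y)$. Thus $z \in \gX_f^\star \cap \gB_t(x)$, a contradiction, so $\norm{z - x} = t$.

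For uniqueness, suppose $z_1 \neq z_2$ are both minimizers with common value $v$. By the boundary claim, $\norm{z_1 - x} = \norm{z_2 - x} = t$. Their midpoint $\bar z$ lies in the ball, and by convexity $f(\bar z) \le v$, so $\bar z$ is also a minimizer. However, strict convexity of $\norm{\cdot}^2$ (equivalently, the parallelogram law) gives
\begin{align*}
\norm{\bar z - x}^2 = \tfrac12 \norm{z_1 - x}^2 + \tfrac12 \norm{z_2 - x}^2 - \tfrac14 \norm{z_1 - z_2}^2 < t^2,
\end{align*}
so $\bar z$ is an interior minimizer. This contradicts the boundary claim, hence the minimizer is unique.

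The only delicate point is the local-to-global step in the boundary argument, which must be carried out without any differentiability of $f$. The segment argument above handles it.
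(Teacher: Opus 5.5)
Your proof is correct, including the repair of (ii). As printed, (ii) is simply false: your $f\equiv 0$ example gives $\brox{t}{f}{x}=\gB_t(x)$. The version with hypothesis $\gB_t(x)\cap\gX_f^\star=\emptyset$ is the one the paper actually invokes in \Cref{lemma:explaining} and after \Cref{thm:combination-complicated}. The paper itself gives no proof to compare against, since it imports the result from \citet{gruntkowska2025ball}. Your route is the standard elementary argument and is complete as written: Weierstrass applied to $f+\iota_{\gB_t(x)}$ for existence, the segment inequality for the local-to-global step, and the parallelogram law for uniqueness.
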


\begin{fact}[Second Brox Theorem \citep{gruntkowska2025ball}]
   \label{fact:second-brox}
   Let $f:\R^d \mapsto \R \cup \cbrac{+\infty}$ satisfy \Cref{assmp:pcc} and choose $x \in \dom f$ and $u \in \brox{t}{f}{x}$ for some $t > 0$.  
   Then, there exists $c_t(x) \geq 0$ such that 
   \begin{enumerate}[label=(\roman*)]
        \item $c_t(x)(x - u) \in \partial f(u)$.
        \item $f(y) - f(u) \geq c_t(x)\inner{x - u}{y - u}$ for all $y \in \R^d$.
   \end{enumerate}
\end{fact}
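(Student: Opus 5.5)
The plan is to read off the first-order optimality condition of the constrained problem $\min_{z \in \gB_t(x)} f(z)$ at its minimizer $u$, and to use the explicit form of the normal cone of a Euclidean ball. Write the subproblem as the unconstrained minimization of $f + \iota_{\gB_t(x)}$. Since $u$ is a minimizer, Fermat's rule gives $0 \in \partial\rbrac{f + \iota_{\gB_t(x)}}(u)$. The main step is to split this subdifferential as $\partial f(u) + N_{\gB_t(x)}(u)$, where $N_{\gB_t(x)}(u) = \partial \iota_{\gB_t(x)}(u)$ is the normal cone.

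The sum rule needs a constraint qualification, and I expect this to be the only delicate point. I would apply the standard sum rule for proper closed convex functions (Rockafellar, Thm.~23.8), which requires $\operatorname{ri}(\dom f) \cap \operatorname{int}\gB_t(x) \neq \emptyset$. To check it, I would use that $x \in \dom f$ and that $\dom f$ is nonempty and convex, so $\operatorname{ri}(\dom f)$ is nonempty. Pick $y \in \operatorname{ri}(\dom f)$. By the line-segment principle, every point $(1-s)x + sy$ with $s \in (0,1]$ lies in $\operatorname{ri}(\dom f)$. For $s$ small enough this point is within distance $t$ of $x$, hence in $\operatorname{int}\gB_t(x)$ because $t>0$. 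So the qualification holds and
\begin{align*}
   0 \in \partial f(u) + N_{\gB_t(x)}(u).
\end{align*}

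Next I would compute the normal cone of the ball. If $\norm{u - x} < t$, then $N_{\gB_t(x)}(u) = \cbrac{0}$. If $\norm{u-x} = t$, then $N_{\gB_t(x)}(u) = \cbrac{c(u-x) : c \geq 0}$; this follows because the ball is the sublevel set of the differentiable convex function $\tfrac12\norm{\cdot - x}^2$, which satisfies Slater's condition. In both cases there is some $c \geq 0$ (with $c=0$ in the interior case) such that $-c(u - x) \in \partial f(u)$. Setting $c_t(x) \eqdef c$ gives $c_t(x)(x-u) \in \partial f(u)$, which is item (i).

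Item (ii) then follows directly from the subgradient inequality applied to $g = c_t(x)(x-u) \in \partial f(u)$: for all $y \in \R^d$,
\begin{align*}
   f(y) \geq f(u) + \inner{g}{y - u} = f(u) + c_t(x)\inner{x-u}{y-u}.
\end{align*}
This holds trivially when $f(y) = +\infty$.
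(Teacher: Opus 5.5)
Your proposal is correct and follows essentially the same route the paper takes in its proof of \Cref{lemma:variant}, which reduces to this fact at $\lambda = 0$. Both arguments apply Fermat's rule with the subdifferential sum rule, use the interior/boundary description of the normal cone of $\gB_t(x)$, and then apply the subgradient inequality for (ii); your explicit verification of the relative-interior constraint qualification via the line-segment principle is a detail the paper leaves unchecked when it invokes \Cref{fact:subdiff-calculus}.
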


\begin{fact}[Three Point Identity]
   \label{fact:three-point-identity}
   For any $x, y, z \in \R^d$, we have 
   \begin{align*}
      \inner{x - z}{y - z} = \frac{1}{2}\norm{x - z}^2 + \frac{1}{2}\norm{z - y}^2 - \frac{1}{2}\norm{x - y}^2.
   \end{align*}
\end{fact}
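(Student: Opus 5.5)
Since the statement is an elementary identity in $\R^d$, I would prove it by direct expansion of squared norms using bilinearity and symmetry of the Euclidean inner product. No result specific to the paper is needed.

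The plan is to expand $\norm{x - y}^2$ after inserting the point $z$. Write $x - y = (x - z) - (y - z)$. Expanding the square gives
\begin{align*}
   \norm{x - y}^2 = \norm{x - z}^2 - 2\inner{x - z}{y - z} + \norm{y - z}^2 .
\end{align*}
Rearranging this equation for the cross term and dividing by two gives
\begin{align*}
   \inner{x - z}{y - z} = \frac{1}{2}\norm{x - z}^2 + \frac{1}{2}\norm{y - z}^2 - \frac{1}{2}\norm{x - y}^2 .
\end{align*}
Finally, I would note that $\norm{y - z} = \norm{z - y}$, which matches the form stated in \Cref{fact:three-point-identity}.

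There is no real obstacle. The only step needing care is the sign of the cross term in the expansion. One must also make sure the polarization is carried out around $z$, the common point of both vectors on the left-hand side, and not around $x$ or $y$.
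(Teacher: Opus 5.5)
Your proof is correct: writing $x-y=(x-z)-(y-z)$, expanding the square, and solving for the cross term is the standard verification. The paper states this identity as a Fact without giving a proof, so your direct expansion is all the justification it needs.
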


\begin{fact}[Second Projection Theorem]
   \label{fact:second-proj}
   Let $\gC$ be a non-empty closed and convex set.
   Let $u \in \gC$.
   Then $u = \ProjOn{\gC}{x}$ if and only if 
   \begin{align*}
      \inner{x - u}{y - u} \leq 0, \quad \forall y \in \gC.
   \end{align*}
\end{fact}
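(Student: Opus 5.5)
The plan is to prove both directions directly from the variational definition $\ProjOn{\gC}{x} = \argmin_{y \in \gC} \norm{x - y}^2$, after first recording that this minimizer exists and is unique. Existence follows because $y \mapsto \norm{x-y}^2$ is continuous and coercive and $\gC$ is nonempty and closed, so a minimizer exists on a compact sublevel set intersected with $\gC$. Uniqueness follows from strict convexity of $\norm{x-\cdot}^2$ on the convex set $\gC$: two distinct minimizers would have a strictly better midpoint, which also lies in $\gC$. This makes the statement ``$u = \ProjOn{\gC}{x}$'' equivalent to ``$u$ minimizes $\norm{x-y}^2$ over $\gC$'', which is the form I will use.

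For the direction ($\Rightarrow$), take $u = \ProjOn{\gC}{x}$ and any $y \in \gC$. By convexity, $u_s \eqdef u + s(y-u) \in \gC$ for $s \in (0,1]$, so minimality of $u$ gives $\norm{x - u_s}^2 \geq \norm{x-u}^2$. Expanding the left-hand side yields
\begin{align*}
   -2s\inner{x-u}{y-u} + s^2\norm{y-u}^2 \geq 0 .
\end{align*}
Dividing by $s>0$ and letting $s \downarrow 0$ gives $\inner{x-u}{y-u} \leq 0$.

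For the direction ($\Leftarrow$), assume $u \in \gC$ satisfies the inequality for all $y \in \gC$. For any $y \in \gC$, expand $\norm{x-y}^2 = \norm{(x-u) - (y-u)}^2$; this is equivalently an instance of \Cref{fact:three-point-identity}. This gives
\begin{align*}
   \norm{x-y}^2 = \norm{x-u}^2 + \norm{y-u}^2 - 2\inner{x-u}{y-u} \geq \norm{x-u}^2 .
\end{align*}
Hence $u$ minimizes the distance over $\gC$, and by the uniqueness shown above, $u = \ProjOn{\gC}{x}$.

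There is no real obstacle here. The only points needing care are the limiting argument in ($\Rightarrow$), which uses convexity of $\gC$ to keep $u_s$ feasible, and invoking uniqueness of the projection so that ``a minimizer'' can be identified with $\ProjOn{\gC}{x}$.
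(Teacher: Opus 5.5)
Your proof is correct and complete. The paper gives no proof of this statement: it lists it among the background facts as a standard result, the textbook ``second projection theorem'', so there is no in-paper argument to compare against.

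A common textbook route gets it in one line from the prox characterization $u = \prox{\iota_{\gC}}{x} \iff x - u \in \partial \iota_{\gC}(u) = \gN_{\gC}(u)$. Your argument is more elementary and self-contained. For ($\Rightarrow$) it uses only the expansion of $\norm{(x-u) - s(y-u)}^2$ along the feasible segment. For ($\Leftarrow$) it uses the same expansion with $s=1$. Existence and uniqueness come from coercivity and strict convexity. The prox route instead presupposes subdifferential optimality conditions and the normal-cone description of $\partial\iota_{\gC}$; it buys brevity and a direct link to the proximal machinery used in the rest of the paper.

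Your expansion in ($\Leftarrow$) actually gives $\norm{x-y}^2 \geq \norm{x-u}^2 + \norm{y-u}^2$ for all $y \in \gC$. So $u$ is the strict minimizer there, and you do not need to invoke the separately proved uniqueness in that direction.
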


\begin{fact}[Theorem 3.40 of \citet{beck2017first}]
   \label{fact:subdiff-calculus}
   Let $f_i : \R^d \to \R \cup \cbrac{+\infty}$, $i \in \cbrac{1,\dots,n}$, be proper convex functions such that $\cap_{i=1}^n \operatorname{ri}({\rm dom} f) \neq \emptyset$.
   Then, for any $x \in \R^d$, it holds that
   $$
      \partial\!\left( \sum_{i=1}^n f_i \right)(x) = \sum_{i=1}^n \partial f_i(x).
   $$
\end{fact}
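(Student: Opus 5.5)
The inclusion $\supseteq$ is elementary, so the work is in $\subseteq$; I would reduce it to the case $n=2$ and then use a proper separation argument in $\R^{d}\times\R$. (I read the hypothesis as $\cap_{i=1}^n \operatorname{ri}(\dom f_i)\neq\emptyset$.)

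\emph{Easy inclusion.} Let $g_i\in\partial f_i(x)$. Then $f_i(y)\ge f_i(x)+\inner{g_i}{y-x}$ for all $y$, and summing over $i$ shows $\sum_i g_i\in\partial(\sum_i f_i)(x)$. This direction needs no qualification condition.

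\emph{Reduction to $n=2$.} I would induct on $n$, writing $\sum_{i=1}^n f_i=(\sum_{i=1}^{n-1}f_i)+f_n$. The induction needs the standard relative-interior calculus for convex sets: if $\cap_i \operatorname{ri} C_i\neq\emptyset$, then $\operatorname{ri}(\cap_i C_i)=\cap_i\operatorname{ri}C_i$. Applying it to $C_i=\dom f_i$, with $\dom(\sum_{i<n}f_i)=\cap_{i<n}\dom f_i$, shows that the qualification condition passes both to the first $n-1$ functions and to the pair $(\sum_{i<n}f_i,f_n)$.

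\emph{Case $n=2$.} Fix $g\in\partial(f_1+f_2)(x)$. Then $x\in\dom f_1\cap\dom f_2$. After replacing $f_1$ by $f_1-\inner{g}{\cdot}$ and shifting constants, I may assume $g=0$, $f_1(x)=f_2(x)=0$, and that $x$ minimizes $f_1+f_2$. Define the convex sets
\begin{align*}
   A&\eqdef\cbrac{(y,s): s> f_1(y)}, & B&\eqdef\cbrac{(y,s): s\le -f_2(y)}.
\end{align*}
They are disjoint, because $f_1+f_2\ge 0$. By the proper separation theorem there is $(a,\beta)\neq 0$ with
\begin{align*}
   \inner{a}{y}+\beta s\ \ge\ \inner{a}{y'}+\beta s' \qquad \forall (y,s)\in A,\ (y',s')\in B,
\end{align*}
and with $A\cup B$ not contained in the hyperplane. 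Since $s$ can be taken arbitrarily large in $A$, we get $\beta\ge0$.

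\emph{Main obstacle: excluding $\beta=0$.} If $\beta=0$, then $a$ properly separates $\dom f_1$ from $\dom f_2$. Proper separation of two convex sets is equivalent to their relative interiors being disjoint, which contradicts the hypothesis. So $\beta>0$, and I normalize to $\beta=1$.

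\emph{Reading off the subgradients.} With $\beta=1$, let $s\downarrow f_1(y)$ and take $s'=-f_2(y')$. This gives $f_1(y)+\inner{a}{y}\ge -f_2(y')+\inner{a}{y'}$ for all $y,y'$. Setting $y'=x$ yields $f_1(y)\ge f_1(x)+\inner{-a}{y-x}$, so $-a\in\partial f_1(x)$. Setting $y=x$ yields $a\in\partial f_2(x)$. Hence $0=(-a)+a\in\partial f_1(x)+\partial f_2(x)$. Undoing the shift by $g$ gives $g\in\partial f_1(x)+\partial f_2(x)$, which completes the $n=2$ case and, by the induction, the proof.
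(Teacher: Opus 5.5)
Your proof is correct. Your reading of the hypothesis as $\bigcap_i\operatorname{ri}(\dom f_i)\neq\emptyset$ is also right; the $\operatorname{ri}(\dom f)$ in the statement is a typo.

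The paper gives no proof of this Fact; it is simply imported from Beck's textbook. The classical argument for this relative-interior sum rule is Rockafellar's \emph{Convex Analysis}, Theorem~23.8, and it goes through conjugate duality:
\begin{itemize}
\item Under the qualification, the conjugate of $\sum_i f_i$ is the infimal convolution of the $f_i^*$, with the infimum attained.
\item So for $g\in\partial(\sum_i f_i)(x)$ one can split $g=\sum_i g_i$ with $(\sum_i f_i)^*(g)=\sum_i f_i^*(g_i)$.
\item The Fenchel--Young equality for the sum, combined with the termwise Fenchel--Young inequalities, then forces $f_i(x)+f_i^*(g_i)=\inner{g_i}{x}$, that is, $g_i\in\partial f_i(x)$. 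This handles all summands at once.
\end{itemize}

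You instead properly separate the strict epigraph of $f_1$ from the hypograph of $-f_2$ in $\R^d\times\R$, and then induct using $\operatorname{ri}(\bigcap_i C_i)=\bigcap_i\operatorname{ri}C_i$. The two delicate points are handled soundly. You obtain $\beta\ge 0$ from the unboundedness of $A$ in $s$. When $\beta=0$, you note that properness of the separation passes to $\dom f_1$ and $\dom f_2$, so the proper-separation criterion gives $\operatorname{ri}\dom f_1\cap\operatorname{ri}\dom f_2=\emptyset$, a contradiction.

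Your route is more elementary and shows exactly where the qualification is needed: it rules out a vertical separating hyperplane. The conjugate route relies on the heavier exact-infimal-convolution theorem, which itself rests on separation and recession arguments. In exchange, it treats all $n$ summands simultaneously, gives the conjugate-of-a-sum formula as a by-product, and carries over to Rockafellar's refinement in which polyhedral summands need only $\dom f_i$, rather than $\operatorname{ri}(\dom f_i)$, in the common intersection.
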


\subsection{Useful Lemmas}

When analyzing {\TRPPM}, a natural first approach is to interpret it as an instance of {\BPM} applied to a noisy model $m_k$.
Under this viewpoint, one may directly invoke \Cref{fact:second-brox}, leading to conditions that closely resemble the standard {\BPM} analysis.
However, this approach yields suboptimal bounds.
To obtain sharper results, we therefore introduce the following refinement.
\begin{lemma}[Variant of Second Brox Theorem]
   \label{lemma:variant}
   Let $f$ be proper, closed and convex.
   Choose $x \in \operatorname{dom} f$ and let 
   \begin{align*}
      u \in \argmin_{z \in \gB_t(x)} \cbrac{f(z) + \frac{\lambda}{2}\norm{z - x}^2},
   \end{align*}
   where $\lambda \geq 0$ is a constant.
   Then, there exists some $c_t(x) \geq 0$ such that 
   \begin{itemize}
      \item[(i)] $\rbrac{c_t(x) + \lambda}(x - u) \in \partial f(u)$.
      \item[(ii)] $f(y) - f(u) \geq \rbrac{c_t(x) + \lambda}\inner{x - u}{y - u}$ for all $y \in \R^d$.
   \end{itemize}
\end{lemma}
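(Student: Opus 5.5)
The plan is to read $u$ through the first-order optimality condition of the constrained regularized subproblem, split the subdifferential by the sum rule, and then use the normal-cone structure of the ball. Write the subproblem as the unconstrained minimization of
\begin{align*}
F(z) \eqdef f(z) + \frac{\lambda}{2}\norm{z-x}^2 + \iota_{\gB_t(x)}(z).
\end{align*}
Then $u$ minimizes $F$ if and only if $0 \in \partial F(u)$.

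The first step is to apply \Cref{fact:subdiff-calculus}. The quadratic is finite everywhere, and $x \in \dom f \cap \gB_t(x)$. To meet the relative-interior qualification I would use that $\dom f$ is convex and contains $x$, which lies in the interior of the ball (since $t>0$). Hence $\operatorname{ri}(\dom f)$ meets $\operatorname{int}\gB_t(x)$: points of $\operatorname{ri}(\dom f)$ arbitrarily close to $x$ exist, because $x$ lies in the closure of the relative interior. This gives
\begin{align*}
0 \in \partial f(u) + \lambda(u-x) + N_{\gB_t(x)}(u).
\end{align*}

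The second step computes the normal cone of the ball. If $\norm{u-x}<t$, then $N_{\gB_t(x)}(u)=\{0\}$. If $\norm{u-x}=t$, then $N_{\gB_t(x)}(u)=\{c(u-x): c\ge 0\}$; I would verify this via \Cref{fact:second-proj}, or directly from the supporting hyperplane of the ball. In both cases there is $c_t(x)\ge 0$ such that
\begin{align*}
\lambda(x-u) + c_t(x)(x-u) \in \partial f(u),
\end{align*}
where in the interior case $c_t(x)=0$. This proves (i).

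Part (ii) then follows immediately from the subgradient inequality $f(y)\ge f(u)+\inner{g}{y-u}$ with $g=(c_t(x)+\lambda)(x-u)$. If $y\notin\dom f$, the inequality is trivial since the left side is $+\infty$.

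The only delicate point is the constraint qualification in the sum rule, particularly when $\dom f$ is lower-dimensional. The argument sketched above handles it: $x\in\dom f$ is an interior point of the ball, so a relative-interior point of $\dom f$ near $x$ lies in $\operatorname{int}\gB_t(x)=\operatorname{ri}\gB_t(x)$.
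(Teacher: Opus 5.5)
Your proposal is correct and follows essentially the same route as the paper. Both use Fermat's rule with the sum rule (\Cref{fact:subdiff-calculus}) applied to $f + \frac{\lambda}{2}\norm{\cdot - x}^2 + \iota_{\gB_t(x)}$, split into interior and boundary cases for the normal cone of the ball, and then read off (ii) from the subgradient inequality. Your extra check of the relative-interior qualification, via $x \in \dom f \cap \operatorname{int}\gB_t(x)$ (which uses $t>0$), fills in a step the paper invokes without verifying.
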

\begin{remark}
   When $\lambda = 0$, \Cref{lemma:variant} reduces to the standard second brox theorem from \citet{gruntkowska2025ball}.
   When $\lambda > 0$, the coefficient $c_t(x) + \lambda$ is strictly positive.
\end{remark}

\subsection{Displacement as a Function of Regularizing Factor}
\label{sec:displacement-lambda}
In this section, we consider $\phi(\lambda, x) \eqdef \norm{x - \prox{\nicefrac{f}{\lambda}}{x}}$ as a function of $\lambda$.

\begin{lemma}[Range]
   \label{lemma:bounded-range}
   Let $f:\R^d \mapsto \R \cup \cbrac{+\infty}$ satisfy \Cref{assmp:pcc}.
   Then, for any $x\in\dom f$ and $\lambda \in [0, \infty]$
   \begin{align*}
      0 \leq \norm{x - \prox{\nicefrac{f}{\lambda}}{x}} \leq \norm{x - x_\star^f},
   \end{align*}
   where $x_\star^f \in \gX_\star^f$ is any minimizer of $f$.
   In particular, we have 
   \begin{align*}
   0 \leq \norm{x - \prox{\nicefrac{f}{\lambda}}{x}} \leq \dist{x}{\gX_\star^f}.
   \end{align*}
\end{lemma}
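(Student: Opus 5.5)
The plan is to compare $p \eqdef \prox{\nicefrac{f}{\lambda}}{x}$ with an arbitrary minimizer $x_\star^f$ through the optimality condition of the proximal subproblem, and then use monotonicity of $\partial f$ (\Cref{fact:mono}). The lower bound $0$ is trivial. The endpoint cases are handled separately. For $\lambda = +\infty$ we have $p = x$, so the displacement is $0$. For $\lambda = 0$, the convention $p = \ProjOn{\gX_\star^f}{x}$ gives $\norm{x - p} = \dist{x}{\gX_\star^f} \le \norm{x - x_\star^f}$. So the real work is the case $\lambda \in (0,\infty)$.

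\textbf{Case $\lambda \in (0,\infty)$.}

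1. By \Cref{fact:subdiff-calculus}, the quadratic term has full domain, so the optimality condition for $p$ reads $\lambda(x - p) \in \partial f(p)$. Also $0 \in \partial f(x_\star^f)$.

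2. Monotonicity (\Cref{fact:mono}) applied to these two subgradients gives $\inner{\lambda(x-p) - 0}{p - x_\star^f} \ge 0$, that is, $\inner{x - p}{p - x_\star^f} \ge 0$.

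3. Writing $x - x_\star^f = (x - p) + (p - x_\star^f)$ and expanding the square,
\begin{align*}
\norm{x - x_\star^f}^2 = \norm{x-p}^2 + \norm{p - x_\star^f}^2 + 2\inner{x-p}{p-x_\star^f} \ge \norm{x-p}^2 .
\end{align*}
This is the desired inequality $\norm{x - p} \le \norm{x - x_\star^f}$. It is equivalently an instance of \Cref{fact:three-point-identity}.

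4. Taking the infimum over $x_\star^f \in \gX_\star^f$ yields $\norm{x - p} \le \dist{x}{\gX_\star^f}$. The set $\gX_\star^f$ is closed, convex and nonempty, so this infimum is attained.

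The only mildly delicate point is step 1, the subdifferential sum rule. It needs the relative-interior condition, which holds trivially because $\frac{\lambda}{2}\norm{\cdot - x}^2$ is finite everywhere. Alternatively, $x_\star^f$ can be compared directly in the prox objective, but the monotonicity route is cleaner. No real obstacle is expected.
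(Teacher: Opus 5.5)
Your argument is correct, but it takes a different route from the paper's.

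The paper works with function values only. Since $p = \prox{\nicefrac{f}{\lambda}}{x}$ minimizes $f(z) + \frac{\lambda}{2}\norm{z - x}^2$, testing against $z = x_\star^f$ gives $f(p) + \frac{\lambda}{2}\norm{x - p}^2 \le f(x_\star^f) + \frac{\lambda}{2}\norm{x - x_\star^f}^2$. Dropping $f(p) - f(x_\star^f) \ge 0$ then settles the case $\lambda \in (0,\infty)$, and the endpoint cases are handled exactly as you do. This is the ``alternative'' you mention at the end. It is the more elementary of the two, since it needs neither the subdifferential sum rule nor monotonicity of $\partial f$, only the definition of the argmin.

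Your first-order route pays for those two facts; you verify them correctly, and the relative-interior condition is automatic because the quadratic has full domain. In exchange you get the sharper Fej\'er-type inequality $\norm{x - x_\star^f}^2 \ge \norm{x - p}^2 + \norm{p - x_\star^f}^2$, which also yields $\norm{p - x_\star^f} \le \norm{x - x_\star^f}$. This is the same mechanism the paper uses, via \Cref{lemma:variant} and the three-point identity, to show that $\norm{x_k - x_\star^f}$ is non-increasing along the \TRPPM{} iterates. The paper's zeroth-order version instead keeps the slack term $\frac{2}{\lambda}\bigl(f(p) - f_{\inf}\bigr)$. That is exactly the form it reuses in the proof of \Cref{lemma:simplifiedlambda}.
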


\begin{lemma}[Monotonicity]
   \label{lemma:monotonicity-displacement}
   Let $f:\R^d \mapsto \R \cup \cbrac{+\infty}$ satisfy \Cref{assmp:pcc}. 
   For any fixed $x \in \operatorname{dom} f$, $\phi(\lambda, x)$ is a monotone non-increasing function of $\lambda \geq 0$.
\end{lemma}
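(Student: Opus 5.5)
We must prove that for fixed $x\in\dom f$, the map $\lambda\mapsto\phi(\lambda,x)=\norm{x-\prox{\nicefrac{f}{\lambda}}{x}}$ is non-increasing on $[0,\infty)$. We split into the case $0<\lambda_1<\lambda_2$ and the degenerate endpoint $\lambda_1=0$.

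\textbf{Case $0<\lambda_1<\lambda_2$.} Write $u_i \eqdef \prox{\nicefrac{f}{\lambda_i}}{x}$, $i=1,2$. By the optimality condition of the proximal subproblem (\Cref{lemma:variant} with $t=\infty$, or directly via \Cref{fact:subdiff-calculus}), we have $\lambda_i(x-u_i)\in\partial f(u_i)$. Monotonicity of $\partial f$ (\Cref{fact:mono}) then gives
\begin{align*}
   \inner{\lambda_1(x-u_1)-\lambda_2(x-u_2)}{u_1-u_2}\ge 0 .
\end{align*}
Set $a\eqdef x-u_1$ and $b\eqdef x-u_2$, so that $u_1-u_2=b-a$. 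The inequality becomes $\inner{\lambda_1 a-\lambda_2 b}{b-a}\ge0$, which expands to
\begin{align*}
   (\lambda_1+\lambda_2)\inner{a}{b}\ \ge\ \lambda_1\norm{a}^2+\lambda_2\norm{b}^2 .
\end{align*}
Applying Cauchy--Schwarz on the left yields
\begin{align*}
   \lambda_1\norm{a}^2-(\lambda_1+\lambda_2)\norm{a}\norm{b}+\lambda_2\norm{b}^2\le 0 ,
\end{align*}
and the left-hand side factors as $(\lambda_1\norm{a}-\lambda_2\norm{b})(\norm{a}-\norm{b})$. We now argue by contradiction: suppose $\norm{b}>\norm{a}$. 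Then $\norm{a}-\norm{b}<0$, while $\lambda_1\norm{a}-\lambda_2\norm{b}<0$ because $\lambda_2>\lambda_1$ and $\norm{b}>\norm{a}\ge0$. The product is therefore strictly positive, contradicting the inequality. Hence $\norm{b}\le\norm{a}$, i.e.\ $\phi(\lambda_2,x)\le\phi(\lambda_1,x)$.

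\textbf{Case $\lambda_1=0$.} By the convention in \Cref{def:displacement-func}, $\phi(0,x)=\dist{x}{\gX_\star^f}$. For any $\lambda_2>0$, \Cref{lemma:bounded-range} gives $\phi(\lambda_2,x)\le\dist{x}{\gX_\star^f}=\phi(0,x)$, which settles this case.

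The key step is the sign analysis of the factored quadratic in $\norm{a},\norm{b}$. It is worth noting that the argument requires no existence or uniqueness issues beyond single-valuedness of the prox, which holds under \Cref{assmp:pcc}.
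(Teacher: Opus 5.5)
Your proof is correct and follows essentially the paper's route: the two prox optimality inclusions, monotonicity of $\partial f$ and Cauchy--Schwarz give $\lambda_1\norm{a}^2-(\lambda_1+\lambda_2)\norm{a}\norm{b}+\lambda_2\norm{b}^2\le 0$, from which the ordering of the displacements follows. Your factorization $(\lambda_1\norm{a}-\lambda_2\norm{b})(\norm{a}-\norm{b})\le 0$ is a slightly cleaner replacement for the paper's ratio argument with $r=\norm{d_1}/\norm{d_2}$, since it avoids dividing by a norm and hence the separate case $x\in\gX_\star^f$; citing \Cref{lemma:bounded-range} also makes explicit the $\lambda=0$ endpoint, which the paper only calls trivial.
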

A natural question is whether a stronger conclusion, such as strict monotonicity, can be established. 
The following counterexample demonstrates that this cannot, in general, be achieved.

\begin{remark}[Strict Monotonicity]
   It is not possible to assert that, for an arbitrary function $f:\R^d \mapsto \R \cup \cbrac{+\infty}$ that satisfies \Cref{assmp:pcc} and any fixed point $x \notin \gX_\star^f$, the displacement function $\phi(\lambda, x)$ is strictly decreasing in $\lambda$.
   Indeed, consider the function $f(x) = \lvert x \rvert$.
   It is known that \citep[Lemma 6.5]{beck2017first} its proximal operator admits the closed-form expression
   \begin{align*}
      \prox{\nicefrac{f}{\lambda}}{x} = \sbrac{\lvert x \rvert - \frac{1}{\lambda}}_+ \cdot \sign(x).
   \end{align*}
   Fix $x = 0.5$.
   For $\lambda = 1$, we obtain
   \begin{align*}
      \prox{\nicefrac{f}{1}}{0.5} = 0.
   \end{align*}
   Similarly, for $\lambda = 2$, we still have
   \begin{align*}
      \prox{\nicefrac{f}{2}}{0.5} = 0.
   \end{align*}
   As a consequence, the corresponding displacement $\phi(\lambda, x)$ remains constant over this interval, demonstrating that strict monotonicity with respect to $\lambda$ cannot be guaranteed in general.
\end{remark}

\begin{lemma}[Limit]
   \label{lemma:limit}
   Let $f:\R^d \mapsto \R \cup \cbrac{+\infty}$ satisfy \Cref{assmp:pcc}.  
   For any $x \in \dom f$,
   \begin{align*}
      \lim \limits_{\lambda \rightarrow \infty} \phi(\lambda, x) = 0, \qquad \lim \limits_{\lambda \rightarrow 0^{+}} \phi(\lambda, x) = \dist{x}{\gX_f^\star}.
   \end{align*}
\end{lemma}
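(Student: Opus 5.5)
The plan is to write $p_\lambda \eqdef \prox{\nicefrac{f}{\lambda}}{x}$ for $\lambda>0$ and treat the two limits separately. Both rest on the same comparison: the defining minimization of $p_\lambda$ is tested against a well-chosen point. Since $x\in\dom f$ and $\gX_\star^f\neq\emptyset$, both $f(x)$ and $f_{\inf}$ are finite, so the resulting bounds are meaningful.

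\emph{Limit $\lambda\to\infty$.} I compare the objective value at $p_\lambda$ with the value at the candidate $z=x$:
\begin{align*}
f(p_\lambda)+\frac{\lambda}{2}\norm{p_\lambda-x}^2 \le f(x).
\end{align*}
Using $f(p_\lambda)\ge f_{\inf}$, this gives $\phi(\lambda,x)^2 \le 2\rbrac{f(x)-f_{\inf}}/\lambda$. The right-hand side tends to $0$ as $\lambda\to\infty$.

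\emph{Limit $\lambda\to0^+$.} Let $d\eqdef\dist{x}{\gX_\star^f}$ and $x_\star\eqdef\ProjOn{\gX_\star^f}{x}$. The upper bound $\limsup_{\lambda\to0^+}\phi(\lambda,x)\le d$ follows directly from \Cref{lemma:bounded-range}. For the lower bound, I compare the objective at $p_\lambda$ with the value at the candidate $z=x_\star$:
\begin{align*}
f(p_\lambda)+\frac{\lambda}{2}\norm{p_\lambda-x}^2 \le f_{\inf}+\frac{\lambda}{2}d^2 .
\end{align*}
This yields $f_{\inf}\le f(p_\lambda)\le f_{\inf}+\frac{\lambda}{2}d^2$, so $f(p_\lambda)\to f_{\inf}$.

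The family $\{p_\lambda\}$ is bounded, since $\norm{p_\lambda-x}\le d$. Take any sequence $\lambda_j\to0^+$ along which $\phi(\lambda_j,x)$ converges to $\liminf_{\lambda\to0^+}\phi(\lambda,x)$. Passing to a further subsequence, $p_{\lambda_j}\to \bar p$. Because $f$ is closed, it is lower semicontinuous, so $f(\bar p)\le\liminf_j f(p_{\lambda_j})=f_{\inf}$. Hence $\bar p\in\gX_\star^f$, and therefore $\norm{\bar p-x}\ge d$. By continuity of the norm, $\liminf_{\lambda\to0^+}\phi(\lambda,x)=\norm{\bar p-x}\ge d$. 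Combined with the upper bound, this gives $\lim_{\lambda\to0^+}\phi(\lambda,x)=d$.

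As a by-product, every cluster point of $p_\lambda$ lies in $\gX_\star^f$ at distance exactly $d$ from $x$. It therefore equals $x_\star$, which is consistent with the convention $\phi(0,x)=\dist{x}{\gX_\star^f}$ in \Cref{def:displacement-func}.

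The main obstacle is the lower bound as $\lambda\to0^+$. Knowing only $f(p_\lambda)\to f_{\inf}$ does not by itself force $p_\lambda$ close to $\gX_\star^f$ when $f$ lacks growth conditions. The argument therefore relies on the boundedness from \Cref{lemma:bounded-range}, a compactness extraction, and lower semicontinuity of $f$, rather than on any quantitative estimate.
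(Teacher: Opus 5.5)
Your proposal is correct and follows essentially the same route as the paper. For $\lambda\to\infty$, testing the prox objective at $z=x$ gives $\phi(\lambda,x)^2\le 2(f(x)-f_{\inf})/\lambda$; for $\lambda\to 0^+$, the upper bound comes from the range lemma, and the lower bound follows from $f(p_\lambda)\to f_{\inf}$, boundedness, subsequence extraction and lower semicontinuity of $f$ (the paper phrases this last step as a contradiction, whereas you extract a subsequence realizing the $\liminf$ directly).
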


\begin{lemma}[Continuity I]
   \label{lemma:cont-distlambda}
   Let $f:\R^d \mapsto \R \cup \cbrac{+\infty}$ satisfy \Cref{assmp:pcc}.
   For any $x \in \dom f$, $\phi(\lambda, x)$ is a continuous function of $\lambda$ for $\lambda \geq 0$.
\end{lemma}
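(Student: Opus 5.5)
We will prove continuity of $\lambda \mapsto \phi(\lambda,x) = \norm{x - \prox{\nicefrac{f}{\lambda}}{x}}$ on $[0,\infty)$. It suffices to show that $p(\lambda) \eqdef \prox{\nicefrac{f}{\lambda}}{x}$ is continuous as a map into $\R^d$, since composing with $z \mapsto \norm{x-z}$ preserves continuity. We treat $\lambda>0$ and $\lambda = 0$ separately. At $\lambda=0$, right-continuity is exactly the second limit in \Cref{lemma:limit}, provided we strengthen it slightly so that $p(\lambda)$ itself converges to $\ProjOn{\gX_\star^f}{x}$; only the norm statement is needed, and \Cref{lemma:limit} gives it directly.

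For $\lambda, \lambda' > 0$, write $p = p(\lambda)$ and $p' = p(\lambda')$. The optimality conditions give $\lambda(x - p) \in \partial f(p)$ and $\lambda'(x-p') \in \partial f(p')$. By monotonicity of $\partial f$ (\Cref{fact:mono}),
\begin{align*}
   \inner{\lambda(x-p) - \lambda'(x-p')}{p - p'} \geq 0 .
\end{align*}
Write $\lambda(x-p) - \lambda'(x-p') = (\lambda - \lambda')(x-p) + \lambda'\rbrac{(x-p) - (x-p')}$, and note that $(x-p)-(x-p') = p'-p$. The inequality becomes
\begin{align*}
   \lambda'\norm{p-p'}^2 \leq (\lambda-\lambda')\inner{x-p}{p-p'} \leq \abss{\lambda-\lambda'}\,\norm{x-p}\,\norm{p-p'}.
\end{align*}
Hence $\norm{p - p'} \leq \frac{\abss{\lambda - \lambda'}}{\lambda'}\,\norm{x - p}$. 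By \Cref{lemma:bounded-range}, $\norm{x-p} \leq \dist{x}{\gX_\star^f}$, which is a finite constant independent of $\lambda$. So as $\lambda' \to \lambda > 0$, with $\lambda'$ bounded away from $0$ near $\lambda$, we get $\norm{p'-p}\to 0$. This gives a local Lipschitz bound and therefore continuity of $p$, and hence of $\phi(\cdot,x)$, on $(0,\infty)$.

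The main obstacle is the endpoint $\lambda=0$. There the convention $\phi(0,x)=\dist{x}{\gX_\star^f}$ is defined separately, and the estimate above degenerates because the factor $1/\lambda'$ blows up. We invoke \Cref{lemma:limit}, which states that $\lim_{\lambda\to 0^+}\phi(\lambda,x) = \dist{x}{\gX_\star^f} = \phi(0,x)$. This is exactly right-continuity at $0$, and combining it with the interior argument completes the proof.

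If one preferred a self-contained argument for this endpoint, the approach would be as follows. By \Cref{lemma:bounded-range}, $p(\lambda)$ stays bounded. Since $f(p(\lambda)) + \frac{\lambda}{2}\norm{p(\lambda)-x}^2 \leq f_{\inf} + \frac{\lambda}{2}\dist{x}{\gX_\star^f}^2$, every cluster point as $\lambda\to0^+$ lies in $\gX_\star^f$ by closedness of $f$. Moreover, for any minimizer $x^\star$, comparing the objectives at $p(\lambda)$ and $x^\star$ shows $\norm{p(\lambda)-x} \le \norm{x^\star - x}$. Hence every cluster point is a nearest point of $\gX_\star^f$ to $x$, which is unique, namely $\ProjOn{\gX_\star^f}{x}$.
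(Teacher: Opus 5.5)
Your proof is correct. At $\lambda = 0$ it coincides with the paper's proof, which also just invokes \Cref{lemma:limit}. For $\lambda > 0$ you take a genuinely different route.

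The paper argues qualitatively. It takes $\lambda_n \to \lambda$ and extracts a convergent subsequence of the bounded sequence $u_n = \prox{\nicefrac{f}{\lambda_n}}{x}$. It then passes $\lambda_n(x - u_n) \in \partial f(u_n)$ to the limit using the closed graph of the maximally monotone operator $\partial f$. Uniqueness of the prox point shows every cluster point equals $\prox{\nicefrac{f}{\lambda}}{x}$, so the whole sequence converges.

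You instead substitute the two optimality conditions into plain monotonicity of $\partial f$. This is the same inequality the paper uses for \Cref{lemma:monotonicity-displacement}, rearranged. It gives the explicit estimate $\norm{p(\lambda) - p(\lambda')} \leq \frac{\abss{\lambda - \lambda'}}{\lambda'}\norm{x - p(\lambda)}$. Combined with \Cref{lemma:bounded-range}, this makes $p$ Lipschitz with constant $\dist{x}{\gX_\star^f}/\delta$ on $[\delta,\infty)$. For pointwise continuity alone you do not even need that uniform bound, since $p(\lambda)$ is fixed.

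What your route buys: it is more elementary and quantitative. It avoids Bolzano--Weierstrass, maximal monotonicity and the closed-graph property, and the subsequence-plus-uniqueness step, and it yields an explicit modulus. What the paper's route buys: it gives no rate, but it follows the same compactness pattern the paper reuses in \Cref{lemma:limit} and \Cref{lemma:cont-m}.

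Your optional self-contained treatment of $\lambda \to 0^+$ is also sound. It is essentially the paper's argument for \Cref{lemma:limit}, but it gives the stronger conclusion $p(\lambda) \to \ProjOn{\gX_\star^f}{x}$, not merely convergence of the norms.
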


\subsection{Displacement as a Function of the Iterate}
In this section, we consider $\phi(\lambda, x)$ as a function of $x$.

\begin{lemma}
   \label{lemma:phi-PPM-sequence}
   Consider two consecutive iterations $k$ and $k+1$ of {\PPM}. If 
   \begin{align*}
      x_{k+1} &= \argmin_{z \in \R^d} \cbrac{f(z) + \frac{\lambda}{2}\norm{z - x_k}^2}; \qquad x_{k+2} = \argmin_{z \in \R^d} \cbrac{f(z) + \frac{\lambda}{2}\norm{z - x_{k+1}}^2},
   \end{align*}
   then $\norm{x_{k+2} - x_{k+1}} \leq \norm{x_k - x_{k+1}}$.
\end{lemma}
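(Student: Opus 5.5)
We aim to show that successive PPM displacements are non-increasing, using only the optimality conditions of the two proximal subproblems and monotonicity of $\partial f$ (\Cref{fact:mono}). First, write the optimality conditions. Since $z \mapsto f(z) + \frac{\lambda}{2}\norm{z - x}^2$ is proper, closed and strongly convex, and the quadratic has full domain, \Cref{fact:subdiff-calculus} gives
\begin{align*}
   g_{k+1} \eqdef \lambda (x_k - x_{k+1}) \in \partial f(x_{k+1}), \qquad g_{k+2} \eqdef \lambda (x_{k+1} - x_{k+2}) \in \partial f(x_{k+2}).
\end{align*}
Here we assume $\lambda > 0$; if $\lambda = 0$ is allowed via the projection convention, then $x_{k+2} = x_{k+1}$ and the claim is trivial.

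Next, apply monotonicity of $\partial f$ at the points $x_{k+1}$ and $x_{k+2}$:
\begin{align*}
   \inner{g_{k+1} - g_{k+2}}{x_{k+1} - x_{k+2}} \geq 0 .
\end{align*}
Set $a \eqdef x_k - x_{k+1}$ and $b \eqdef x_{k+1} - x_{k+2}$. After dividing by $\lambda$, the inequality becomes $\inner{a - b}{b} \geq 0$, that is, $\norm{b}^2 \leq \inner{a}{b}$.

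Finally, apply Cauchy--Schwarz to get $\norm{b}^2 \leq \norm{a}\norm{b}$. If $b = 0$ the claim is immediate; otherwise divide by $\norm{b}$ to obtain $\norm{x_{k+2} - x_{k+1}} \leq \norm{x_k - x_{k+1}}$.

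There is essentially no obstacle. The only point needing care is justifying the subdifferential sum rule for $f$ plus the quadratic, which \Cref{fact:subdiff-calculus} covers because the quadratic has full domain. Equivalently, one could cite the firm nonexpansiveness of the proximal operator, but the monotonicity route keeps the argument self-contained within the stated facts.
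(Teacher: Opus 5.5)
Your proposal is correct and follows the paper's proof essentially step for step. Like the paper, you write the first-order optimality conditions of the two proximal subproblems, apply monotonicity of $\partial f$ to get $\norm{x_{k+1}-x_{k+2}}^2 \le \inner{x_k - x_{k+1}}{x_{k+1}-x_{k+2}}$, then use Cauchy--Schwarz and divide by $\norm{x_{k+1}-x_{k+2}}$. Your handling of the degenerate case $\lambda = 0$ is slightly more accurate than the paper's: under the projection convention $x_{k+2} = x_{k+1}$, whereas the paper claims $x_k = x_{k+1} = x_{k+2}$, which need not hold.
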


\begin{remark}
   We cannot obtain strict decrease in \Cref{lemma:phi-PPM-sequence} without further assumptions.
   Indeed, consider $f(x) = \mu \lvert x \rvert$, where $\mu > 0$. For a fixed $\lambda$, we know that 
   \begin{align*}
      \prox{\nicefrac{f}{\lambda}}{x} = \sign(x) \cdot \max\rbrac{\lvert x \rvert - \frac{\mu}{\lambda}, 0}.
   \end{align*}
   But then, for an initial point such that $x_0 > \frac{2\mu}{\lambda}$, we have 
   \begin{align*}
      x_1 = x_0 - \frac{\mu}{\lambda}, \quad x_2 = x_1 - \frac{\mu}{\lambda},
   \end{align*}
   and so $\norm{x_1 - x_0} = \norm{x_2 - x_1}$.
\end{remark}

\begin{lemma}[Lipschitz Continuity]
   \label{lemma:Lipschitz}
   Let $f: \R^d \mapsto \R \cup \cbrac{+\infty}$ satisfy \Cref{assmp:pcc}.
   Then for any fixed $\lambda > 0$, the mapping $\prox{\nicefrac{f}{\lambda}}{x}$ is $1$-Lipschitz, i.e.,
   \begin{align*}
      \norm{\prox{\nicefrac{f}{\lambda}}{x} - \prox{\nicefrac{f}{\lambda}}{y}} \leq \norm{x - y}, \quad \forall x, y \in \R^d.
   \end{align*}
\end{lemma}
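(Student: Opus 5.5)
The plan is the standard firm nonexpansiveness argument, obtained from the monotonicity of $\partial f$ (\Cref{fact:mono}) together with the optimality condition of the proximal subproblem. Fix $\lambda > 0$, and write $p \eqdef \prox{\nicefrac{f}{\lambda}}{x}$ and $q \eqdef \prox{\nicefrac{f}{\lambda}}{y}$. Both points are well defined and unique, because the objective $f(z) + \frac{\lambda}{2}\norm{z - x}^2$ is proper, closed and $\lambda$-strongly convex. The first step is to write down the optimality conditions
\begin{align*}
   \lambda(x - p) \in \partial f(p), \qquad \lambda(y - q) \in \partial f(q).
\end{align*}
These follow from Fermat's rule and the subdifferential sum rule (\Cref{fact:subdiff-calculus}). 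The quadratic term is finite everywhere, so its relative interior is all of $\R^d$, and the qualification condition holds automatically because $\dom f \neq \emptyset$. Alternatively, they are exactly \Cref{lemma:variant}(i) with $t = +\infty$, so that $c_t(x) = 0$.

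The second step applies monotonicity of $\partial f$ to these two subgradients:
\begin{align*}
   \inner{\lambda(x - p) - \lambda(y - q)}{p - q} \geq 0.
\end{align*}
Dividing by $\lambda > 0$ and rearranging gives $\inner{x - y}{p - q} \geq \norm{p - q}^2$, which is firm nonexpansiveness. The Cauchy--Schwarz inequality then yields $\norm{p - q}^2 \leq \norm{x - y}\norm{p - q}$. If $p = q$ the claim is trivial; otherwise we divide by $\norm{p - q}$ and obtain $\norm{p - q} \leq \norm{x - y}$.

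No step here is a real obstacle. The only point that needs care is justifying the optimality condition when $x \notin \dom f$, since the statement allows any $x, y \in \R^d$. This causes no difficulty, because the prox is defined for every $x \in \R^d$ and its value $p$ always lies in $\dom f$. The sum-rule argument requires only that $\dom f$ be nonempty, not that $x \in \dom f$. I would therefore invoke the sum rule directly, rather than \Cref{lemma:variant}, which is stated for $x \in \dom f$.
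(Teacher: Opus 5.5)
Your proposal is correct and takes the same route as the paper. The paper simply cites firm nonexpansiveness of the proximal operator of a proper, closed, convex function, while you derive that property in-line from the optimality conditions $\lambda(x-p)\in\partial f(p)$ and $\lambda(y-q)\in\partial f(q)$ together with monotonicity of $\partial f$ (\Cref{fact:mono}), and then finish with Cauchy--Schwarz, which is the standard proof of the cited fact.
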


\begin{lemma}[Continuity II]
   \label{lemma:cont-distx}
   Let $f: \R^d \mapsto \R \cup \cbrac{+\infty}$ satisfy \Cref{assmp:pcc}.
   Then the displacement function $\phi(\lambda, x)$ is a continuous function of $x$ for any $\lambda \geq 0$.
\end{lemma}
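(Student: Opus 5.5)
The statement to prove is Lemma~\ref{lemma:cont-distx}: for fixed $\lambda \ge 0$, the map $x \mapsto \phi(\lambda, x)$ is continuous. I split into the cases $\lambda > 0$ and $\lambda = 0$. In each case the displacement $x - P(x)$ is a difference of Lipschitz maps, where $P$ is either the proximal operator or the projection. Composing with the norm then gives continuity, in fact $2$-Lipschitz continuity.

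\emph{Case $\lambda > 0$.} Write $p(x) \eqdef \prox{\nicefrac{f}{\lambda}}{x}$. For any $x, y$, the reverse triangle inequality gives
\begin{align*}
   \abss{\phi(\lambda,x) - \phi(\lambda,y)} \le \norm{(x - p(x)) - (y - p(y))} \le \norm{x-y} + \norm{p(x)-p(y)} \le 2\norm{x-y}.
\end{align*}
The last step is Lemma~\ref{lemma:Lipschitz}. So $\phi(\lambda,\cdot)$ is $2$-Lipschitz on $\R^d$.

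\emph{Case $\lambda = 0$.} By Definition~\ref{def:displacement-func}, $\phi(0,x) = \dist{x}{\gX_\star^f}$. The set $\gX_\star^f$ is nonempty by Assumption~\ref{assmp:pcc}. It is also closed and convex, since it is a sublevel set of a closed convex function. The distance function to any nonempty set is $1$-Lipschitz: for any $z \in \gX_\star^f$ we have $\dist{x}{\gX_\star^f} \le \norm{x-z} \le \norm{x-y} + \norm{y-z}$, and taking the infimum over $z$ and then swapping $x$ and $y$ gives the bound. Equivalently, $\ProjOn{\gX_\star^f}{\cdot}$ is nonexpansive by Fact~\ref{fact:second-proj}, and the same computation as in the first case applies.

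The argument has no real obstacle. The only care needed is to say that $\gX_\star^f$ is closed and convex, so the projection is well defined and the $\lambda = 0$ convention makes sense. Although $\phi$ was defined for $x \in \dom f$, the proof shows continuity on all of $\R^d$, and in particular on $\dom f$.
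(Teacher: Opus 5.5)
Your proof is correct and follows the paper's argument. For $\lambda>0$ you use the same reverse-triangle and triangle inequality chain with \Cref{lemma:Lipschitz} to get $2$-Lipschitz continuity, and for $\lambda=0$ you appeal to continuity of $\dist{\cdot}{\gX_\star^f}$; you additionally justify that the distance function is $1$-Lipschitz, which the paper simply asserts.
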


\subsection{Other Lemmas}
The following lemma is a generalized verison of \Cref{lemma:simplifiedlambda} for $q \geq 1$.
\begin{lemma}
   \label{lemma:ettet}
   Let $f:\R^d \mapsto \R \cup \cbrac{+\infty}$ satisfy \Cref{assmp:pcc} and \Cref{assmp:sharp-minima} with parameters $\alpha > 0$ and $q \geq 1$.
   Let $\{x_k\}$ be the iterates of \Cref{eq:trppm-def} run with a fixed trust-region radius $t_k \equiv t > 0$, and suppose that the current iterate lies outside the $t$-neighborhood of the solution set, i.e., $\dist{x_k}{\gX_\star^f} > t$.
   If the parameters $\lambda_k$ satisfy
   \begin{align*}
      0 < \lambda_k \leq \frac{2\alpha\rbrac{\dist{x}{\gX_\star^f} - t}^{q-1}}
     {\dist{x_k}{\gX_\star^f} + t},
   \end{align*}
   then it holds that
   \begin{align*}
      \norm{x_k - \prox{\nicefrac{f}{\lambda_k}}{x_k}} \geq t.
   \end{align*}
\end{lemma}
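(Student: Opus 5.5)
We argue by contradiction, using the first-order optimality condition of the proximal subproblem together with the growth condition of \Cref{assmp:sharp-minima}. Write $x = x_k$, $D \eqdef \dist{x}{\gX_\star^f} > t$ (we read the $\dist{x}{\gX_\star^f}$ in the bound as $\dist{x_k}{\gX_\star^f}$), and $p \eqdef \prox{\nicefrac{f}{\lambda_k}}{x}$. Set $s \eqdef \norm{x - p}$ and suppose, for contradiction, that $s < t$.

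\textbf{Step 1: $p$ stays far from the solution set.} Since $\dist{\cdot}{\gX_\star^f}$ is $1$-Lipschitz,
\begin{align*}
   \dist{p}{\gX_\star^f} \geq D - s > D - t > 0 .
\end{align*}
In particular $p \notin \gX_\star^f$, and $\dist{p}{\gX_\star^f}$ is strictly positive.

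\textbf{Step 2: an upper bound on the gap at $p$.} The optimality condition of the prox (the case $t=+\infty$ of \Cref{lemma:variant}, or directly \Cref{fact:subdiff-calculus}) gives $\lambda_k(x - p) \in \partial f(p)$. Let $\bar p \eqdef \ProjOn{\gX_\star^f}{p}$. The subgradient inequality at $p$ evaluated at $\bar p$, followed by Cauchy--Schwarz, yields
\begin{align*}
   f(p) - f_{\inf} \leq \lambda_k \inner{x - p}{p - \bar p} \leq \lambda_k\, s\, \dist{p}{\gX_\star^f}.
\end{align*}

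\textbf{Step 3: combine with the growth condition.} \Cref{assmp:sharp-minima} gives $f(p) - f_{\inf} \geq \alpha\, \distpow{q}{p}{\gX_\star^f}$. Dividing by $\dist{p}{\gX_\star^f} > 0$ (Step 1) gives
\begin{align*}
   \alpha\, \distpow{q-1}{p}{\gX_\star^f} \leq \lambda_k s < \lambda_k t .
\end{align*}
Since $q \geq 1$, the map $r \mapsto r^{q-1}$ is non-decreasing, so Step 1 gives $\alpha (D - t)^{q-1} < \lambda_k t$. For $q=1$ this reads $\alpha < \lambda_k t$.

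\textbf{Step 4: contradiction.} Insert the assumed bound $\lambda_k \leq 2\alpha (D-t)^{q-1}/(D+t)$:
\begin{align*}
   \alpha (D-t)^{q-1} < \frac{2\alpha (D-t)^{q-1}\, t}{D+t}.
\end{align*}
Dividing by $\alpha (D-t)^{q-1} > 0$ gives $D + t < 2t$, i.e.\ $D < t$. This contradicts $D > t$, so $s \geq t$, which is the claim.

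The only delicate point is Step 2. We must pick the comparison point $\bar p$ as the projection of $p$, not of $x$, so that the same factor $\dist{p}{\gX_\star^f}$ appears on both sides and cancels. Once that choice is made, the remaining steps are one-line estimates. The argument in fact yields the conclusion under the weaker bound $\lambda_k \leq \alpha (D-t)^{q-1}/t$. As a fallback, one could instead compare the prox objective at $p$ with its value at $\ProjOn{\gX_\star^f}{x}$, which gives the bound $\alpha (D-s)^q \leq \tfrac{\lambda_k}{2}(D^2 - s^2)$. That route produces exactly the factor $2/(D+t)$ for $q = 1$, but for $q > 1$ it needs a monotonicity argument in $s$, so I would use the subgradient route above.
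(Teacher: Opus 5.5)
Your proof is correct, but it takes a different route from the paper. The paper's proof is exactly your ``fallback.'' It compares the prox objective at $p$ with its value at an arbitrary minimizer to get $f(p) - f_{\inf} \le \tfrac{\lambda_k}{2}(D^2 - s^2)$. It lower-bounds the left side by $\alpha(D-s)^q$ via the triangle inequality and divides by $D-s>0$, which gives $\alpha(D-s)^{q-1} \le \tfrac{\lambda_k}{2}(D+s)$. It then uses $s<t$ to replace $(D-s)^{q-1}$ by $(D-t)^{q-1}$ and $D+s$ by $D+t$. So the monotonicity in $s$ you worried about for $q>1$ is a one-line step of the same kind as your Step 3.

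You instead use the first-order condition $\lambda_k(x-p)\in\partial f(p)$ and test the subgradient inequality at $\ProjOn{\gX_\star^f}{p}$. This gives $f(p)-f_{\inf} \le \lambda_k\, s\,\dist{p}{\gX_\star^f}$. Choosing the projection of $p$ rather than of $x$ lets one power of $\dist{p}{\gX_\star^f}$ cancel against the growth condition. It is also sharper than the paper's zeroth-order comparison: the factor $\tfrac12(D+s)$ multiplying $\lambda_k$ there becomes $s$, and $s<\tfrac12(D+s)$ whenever $s<D$.

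As you observe, your argument proves the claim under $\lambda_k \le \alpha(D-t)^{q-1}/t$. This strictly contains the paper's range, since $D+t>2t$. For $q=1$ it becomes the iterate-independent threshold $\lambda_k \le \alpha/t$. That is the classical fact that, under weak sharp minima, every subgradient at a point outside $\gX_\star^f$ has norm at least $\alpha$. Consequently, in Corollary~\ref{col:thm:linear-conv} one could fix a single $\lambda \le \alpha/t$, with no dependence on $f(x_k)-f_{\inf}$.

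What the paper's route buys in exchange is uniformity of method. It uses only the defining minimization property of the prox, the same device as in Lemmas~\ref{lemma:bounded-range} and~\ref{lemma:roughestimate}, and needs no subdifferential characterization.
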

\begin{remark}
   When $q = 1$, we have 
   \begin{align*}
      0 < \lambda_k \leq \frac{2\alpha}{\dist{x_k}{\gX_\star^f} + t} \leq \frac{2\alpha}{\frac{1}{\alpha}\rbrac{f(x_k) - f_{\inf}} + t},
   \end{align*}
   which recovers the result in \Cref{lemma:simplifiedlambda}.
\end{remark}

\section{Relationship to {\BPM}}
\label{sec:mechanism}
We now explain why {\TRPPM} attains a linear convergence rate by relating it to {\BPM}.
\begin{lemma}
   \label{lemma:explaining}
   Let $f: \R^d \mapsto \R \cup \cbrac{+\infty}$ satisfy \Cref{assmp:pcc} and fix $x \in \dom f$, $t > 0$ and $\lambda > 0$. If $\prox{\nicefrac{f}{\lambda}}{x} \notin \gB_t(x)$ and $x_\star^f \notin \gB_t(x)$, then
   \begin{align*}
      \brox{t}{f}{x} = \brox{t}{f + \frac{\lambda}{2}\norm{\cdot - x}^2}{x}.
   \end{align*}
   In particular, for any non-tail iterate $x_k$ with $\lambda_k \leq \lambda_k^\star$ (where $\lambda_k^\star$ is defined in \Cref{lemma:existence}), we have
   \begin{align*}
      x^{\text{BPM}}_{k+1} = x^{\text{TRPPM}}_{k+1}.
   \end{align*}
\end{lemma}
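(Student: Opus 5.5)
The plan is to work directly with the regularized model $g(z) \eqdef f(z) + \frac{\lambda}{2}\norm{z - x}^2$. Let $u$ be its minimizer over $\gB_t(x)$. I will show three things in turn: $u$ lies on the boundary of $\gB_t(x)$; $u$ also minimizes $f$ over $\gB_t(x)$; and the minimizer of $f$ over the ball is unique. Together these give $\brox{t}{f}{x} = \brox{t}{g}{x} = \{u\}$.

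\emph{Step 1 (the model step is on the boundary).} Since $g$ is proper, closed and $\lambda$-strongly convex, it has the unique unconstrained minimizer $\prox{\nicefrac{f}{\lambda}}{x}$. The constrained minimizer $u$ over $\gB_t(x)$ is therefore also unique. Suppose $\norm{u - x} < t$. Then $u$ is a local, hence global, minimizer of the convex function $g$, so $u = \prox{\nicefrac{f}{\lambda}}{x}$. This contradicts the assumption $\prox{\nicefrac{f}{\lambda}}{x} \notin \gB_t(x)$. Hence $\norm{u - x} = t$. Equivalently, apply \Cref{fact:first-brox}(ii) to $g$, whose minimizer set $\{\prox{\nicefrac{f}{\lambda}}{x}\}$ does not meet the ball.

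\emph{Step 2 ($u$ minimizes $f$ over the ball).} By \Cref{lemma:variant}, there is $c_t(x) \ge 0$ such that
\begin{align*}
   f(y) - f(u) \geq \rbrac{c_t(x) + \lambda}\inner{x - u}{y - u} \qquad \forall y \in \R^d .
\end{align*}
For any $y \in \gB_t(x)$, Cauchy--Schwarz and Step 1 give
\begin{align*}
   \inner{x - u}{y - u} = \norm{x - u}^2 + \inner{x - u}{y - x} \geq t^2 - t\cdot t = 0 .
\end{align*}
Since $c_t(x) + \lambda \ge 0$, it follows that $f(y) \ge f(u)$ on the ball, so $u \in \brox{t}{f}{x}$.

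\emph{Step 3 (uniqueness for $f$).} I read the hypothesis on $x_\star^f$ as $\gB_t(x) \cap \gX_\star^f = \emptyset$, i.e.\ $\dist{x}{\gX_\star^f} > t$. This is the regime in which \Cref{fact:first-brox}(ii) applies; its stated condition should read ``$=\emptyset$''. Under it, $\brox{t}{f}{x}$ is a singleton, so it equals $\{u\}$, which proves the identity.

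For the ``in particular'' part, a non-tail iterate satisfies $\dist{x_k}{\gX_\star^f} > t$. For $\lambda_k \le \lambda_k^\star$, \Cref{lemma:existence} gives $\norm{x_k - \prox{\nicefrac{f}{\lambda_k}}{x_k}} \ge t$. The main obstacle is the equality case, where the prox lies exactly on the sphere and is not strictly outside the ball. Step 1 still goes through here: in that case $u = \prox{\nicefrac{f}{\lambda_k}}{x_k}$ and $\norm{u - x_k} = t$. Steps 2 and 3 use only $\norm{u - x} = t$, not strict exclusion. So I would state the argument under the weaker hypothesis $\norm{x - \prox{\nicefrac{f}{\lambda}}{x}} \ge t$. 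It then yields $x^{\text{BPM}}_{k+1} = x^{\text{TRPPM}}_{k+1}$ directly.
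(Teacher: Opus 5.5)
Your proof is correct, but its key step differs from the paper's. The paper puts \emph{both} constrained minimizers on the sphere $\norm{z-x}=t$: the one for $f$ via \Cref{fact:first-brox}, and the one for $g \eqdef f + \frac{\lambda}{2}\norm{\cdot - x}^2$ because its unconstrained minimizer lies outside the ball. It then observes that on the sphere $f$ and $g$ differ by the constant $\frac{\lambda}{2}t^2$, so their minimizers there coincide. You put only the $g$-minimizer $u$ on the sphere, show directly that $u$ minimizes $f$ over the whole ball, and then invoke uniqueness of $\brox{t}{f}{x}$.

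Your route buys three things:
\begin{enumerate}
\item It gives the inclusion $\brox{t}{g}{x} \subseteq \brox{t}{f}{x}$ whenever the regularized step is active, with no hypothesis on $\gX_\star^f$.
\item It makes explicit the uniqueness that the paper's final ``$u_2 = u_1$'' uses tacitly, since the paper treats an argmin over the nonconvex sphere as a single point.
\item It covers the boundary case $\norm{x_k - \prox{\nicefrac{f}{\lambda_k}}{x_k}} = t$ that \Cref{lemma:existence} actually delivers. The paper's strict hypothesis $\prox{\nicefrac{f}{\lambda}}{x} \notin \gB_t(x)$ does not formally include this case.
\end{enumerate}
The paper's route is shorter and avoids subdifferential calculus.

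Two simplifications are available to you. First, Step 2 needs no subgradients. For $y \in \gB_t(x)$, the inequality $g(y) \ge g(u)$ with $\norm{u-x}=t$ gives $f(y) \ge f(u) + \frac{\lambda}{2}\rbrac{t^2 - \norm{y-x}^2} \ge f(u)$, so \Cref{lemma:variant} is unnecessary. Second, you need not \emph{read} the hypothesis on $x_\star^f$ as $\gB_t(x) \cap \gX_\star^f = \emptyset$. By \Cref{lemma:bounded-range}, $\dist{x}{\gX_\star^f} \ge \norm{x - \prox{\nicefrac{f}{\lambda}}{x}} > t$, so that hypothesis is implied by the one on the prox.

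Your remark that the condition in \Cref{fact:first-brox}(ii) should read ``$=\emptyset$'' is correct.
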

This lemma shows that, for non-tail iterates, adding a sufficiently small quadratic regularization term to $f$ does not alter the next iterate of \BPM.
Since {\BPM} already converges linearly under convexity, this equivalence explains why {\TRPPM} inherits a stabilizing effect and achieves a linear convergence rate.

Moreover, unlike the original objective $f$ used in the subroutines of {\BPM}, the model $m_k = f + \frac{\lambda}{2}\norm{\cdot - x}^2$ is strongly convex, and is therefore typically easier to optimize and incurs lower computational overhead.

\section{Proofs of Lemmas and Theorems}

\subsection{\texorpdfstring{Proof of \Cref{lemma:existence}}{Proof of Lemma~\ref{lemma:existence}}}
\begin{proof}
   By \Cref{lemma:bounded-range}, we have $0 \leq \phi(\lambda, x_k) \leq \dist{x_k}{\gX_\star^f}$ for all $\lambda \geq 0$.
   Moreover, by \Cref{lemma:limit}, $\lim \limits_{\lambda \rightarrow 0^+} \phi(\lambda, x_k) = \dist{x_k}{\gX_\star^f} > t$.
   Hence, there exists $\lambda_1 > 0$ such that $\phi(\lambda_1, x_k) > t$.
   Similarly, since $\lim \limits_{\lambda \rightarrow \infty} \phi(\lambda, x_k) = 0$, there exists $\lambda_2 > 0$ such that $\phi(\lambda_2, x_k) < t$.
   By \Cref{lemma:monotonicity-displacement}, we know that $\lambda_1 < \lambda_2$.
   Since $\phi(\lambda, x_k)$ is continuous on the closed interval $[\lambda_1, \lambda_2]$ as a result of \Cref{lemma:cont-distlambda}, the intermediate value theorem ensures that there exists $\lambda_k^\star$ such that $\phi(\lambda_k^\star, x_k) = t$.
   We reach the conclusion by applying \Cref{lemma:monotonicity-displacement} again.
\end{proof}

\subsection{\texorpdfstring{Proof of \Cref{thm:linear-conv}}{Proof of Theorem~\ref{thm:linear-conv}}}

\begin{proof}
   Applying \Cref{lemma:variant}, with $y = x = x_k$, we have
   \begin{align}
      \label{eq:unknown-tmper1}
      f(x_{k+1}) - f(x_\star^f) &\leq f(x_k) - f(x_\star^f) - (c_{t_k}(x_k) + \lambda_k)\norm{x_{k+1} - x_k}^2.
   \end{align}
   On the other hand, applying \Cref{lemma:variant} with $x = x_k$ and $y = x_\star^f$, we have 
   \begin{align*}
      f(x_{k+1}) - f(x_\star^f) &\leq \rbrac{c_{t_k}(x_k) + \lambda_k}\inner{x_k - x_{k+1}}{x_{k+1} - x_\star^f} \leq  \rbrac{c_{t_k}(x_k) + \lambda_k}\norm{x_k - x_{k+1}}\norm{x_{k+1} - x_\star^f}.
   \end{align*}
   Assuming $x_{k+1} \neq x_\star^f$ (otherwise the minimizer has already been reached), we may rewrite this inequality as
   \begin{align*}
      \frac{f(x_{k+1}) - f(x_\star^f)}{\norm{x_{k+1} - x_\star^f}} \cdot \norm{x_k - x_{k+1}} \leq \rbrac{c_{t_k}(x_k) + \lambda_k}\norm{x_k - x_{k+1}}^2.
   \end{align*}
   Substituting this bound into \cref{eq:unknown-tmper1} gives
   \begin{align}
      \label{eq:ref-org}
      f(x_{k+1}) - f(x_\star^f) \leq f(x_k) - f(x_\star^f) - \frac{\norm{x_k - x_{k+1}}}{\norm{x_{k+1} - x_\star^f}} \cdot \rbrac{f(x_{k+1}) - f(x_\star^f)}.
   \end{align}
   At each iteration, we choose $\lambda_k \in (0,\lambda_k^\star]$.
   By \Cref{lemma:existence}, as long as $x_k$ lies outside the $t$-neighborhood of $\gX_\star^f$, we have $\norm{x_k - \prox{\nicefrac{f}{\lambda_k}}{x_k}} \geq t$.
   Consequently, the ball constraint is active at every such iteration, and hence $\norm{x_k - x_{k+1}} = t$.
   Substituting this into the inequality above, and noting that $\cbrac{\norm{x_{k+1} - x_\star^f}}$ is a nonincreasing sequence, we obtain
   \begin{align*}
      \rbrac{1 + \frac{t}{d_0}}\rbrac{f(x_{k+1}) - f_{\inf}} \leq f(x_{k}) - f_{\inf}.
   \end{align*}
   Unrolling the inequality for $k=0, \hdots, K-1$, where $x_K$ is the last iterate outside of the $t$-neighborhood, we get 
   \begin{align*}
      f(x_{K}) - f_{\inf} \leq \rbrac{\frac{1}{1 + \nicefrac{t}{d_0}}}^K\rbrac{f(x_0) - f_{\inf}}.
   \end{align*}
\end{proof}

\subsection{\texorpdfstring{Proof of \Cref{lemma:roughestimate}}{Proof of Lemma~\ref{lemma:roughestimate}}}

\begin{proof}
   The case $\lambda_k^\star = 0$ is trivial.
   Assume henceforth that $\lambda_k^\star > 0$.
   From the optimality condition, we have
   \begin{align*}
      f(\prox{\nicefrac{f}{\lambda_k^\star}}{x_k}) + \frac{\lambda_k^\star}{2}\norm{x_k - \prox{\nicefrac{f}{\lambda_k^\star}}{x_k}}^2 \leq f(x_k).
   \end{align*}
   Thus, 
   \begin{align*}
      \norm{x_k - \prox{\nicefrac{f}{\lambda_k^\star}}{x_k}}^2 \leq \frac{2}{\lambda_k^\star}\rbrac{f(x_k) - f(\prox{\nicefrac{f}{\lambda_k^\star}}{x_k})} \leq \frac{2}{\lambda_k^\star}\rbrac{f(x_k) - f_{\inf}}.
   \end{align*}
   The claim follows by combining the above inequality with the fact that $\norm{x_k - \prox{\nicefrac{f}{\lambda_k^\star}}{x_k}} \geq t$.
\end{proof}

\subsection{\texorpdfstring{Proof of \Cref{lemma:simplifiedlambda}}{Proof of Lemma~\ref{lemma:simplifiedlambda}}}
\begin{proof}
   We prove the contrapositive of the statement.
   Assume that $\norm{x_k - \prox{\nicefrac{f}{\lambda_k}}{x_k}} < t$.
   By the optimality condition of the proximal subproblem, for any $x_\star^f \in \gX_\star^f$,
   \begin{align*}
      f(\prox{\nicefrac{f}{\lambda_k}}{x_k}) + \frac{\lambda_k}{2}\norm{\prox{\nicefrac{f}{\lambda_k}}{x_k} - x_k}^2 \leq f(x_\star^f) + \frac{\lambda_k}{2}\norm{x_k - x_\star^f}^2.
   \end{align*}
   Rearranging terms,
   \begin{align*}
      f(\prox{\nicefrac{f}{\lambda_k}}{x_k}) - f(x_\star^f) \leq \frac{\lambda_k}{2}\rbrac{\norm{x_k - x_\star^f}^2  - \norm{\prox{\nicefrac{f}{\lambda_k}}{x_k} - x_k}^2}
   \end{align*}
   The above inequality holds for arbitrary $x_\star^f$, we obtain
   \begin{align}
      \label{eq:keyalpha1}
      f(\prox{\nicefrac{f}{\lambda_k}}{x_k})  - f_{\inf} \leq \frac{\lambda_k}{2}\rbrac{\distsq{x_k}{\gX_\star^f} - \norm{\prox{\nicefrac{f}{\lambda_k}}{x_k} - x_k}^2}.
   \end{align}
   On the other hand, invoking \Cref{assmp:sharp-minima}, we have 
   \begin{align*}
      f(\prox{\nicefrac{f}{\lambda_k}}{x_k}) - f_{\inf} \geq \alpha \cdot \dist{\prox{\nicefrac{f}{\lambda_k}}{x_k}}{\gX_\star^f}.
   \end{align*}
   Applying the triangle inequality gives
   \begin{align}
      \label{eq:keyalpha2}
      f(\prox{\nicefrac{f}{\lambda_k}}{x_k}) - f_{\inf} &\geq \alpha\rbrac{\norm{x_k - \ProjOn{\gX_\star^f}{\prox{\nicefrac{f}{\lambda_k}}{x_k}}} - \norm{x_k - \prox{\nicefrac{f}{\lambda_k}}{x_k}}} \notag \\
      &\geq \alpha \rbrac{\dist{x_k}{\gX_\star^f} - \norm{x_k - \prox{\nicefrac{f}{\lambda_k}}{x_k}}}.
   \end{align}
   Combining \cref{eq:keyalpha1} and \cref{eq:keyalpha2}, we have 
   \begin{align*}
      & \dist{x_k}{\gX_\star^f} - \norm{x_k - \prox{\nicefrac{f}{\lambda_k}}{x_k}} \\
      & \leq \frac{\lambda_k}{2\alpha}\rbrac{\distsq{x_k}{\gX_\star^f} - \norm{\prox{\nicefrac{f}{\lambda_k}}{x_k} - x_k}^2} \\
      & = \frac{\lambda_k}{2\alpha}\rbrac{\dist{x_k}{\gX_\star^f} - \norm{\prox{\nicefrac{f}{\lambda_k}}{x_k} - x_k}}\rbrac{\dist{x_k}{\gX_\star^f} + \norm{\prox{\nicefrac{f}{\lambda_k}}{x_k} - x_k}}.
   \end{align*}
   Since we assume $\dist{x_k}{\gX_\star^f} \geq t$ and $\norm{x_k - \prox{\nicefrac{f}{\lambda_k}}{x_k}} < t$, we can divide both sides of the inequality by $\dist{x_k}{\gX_\star^f} - \norm{x_k - \prox{\nicefrac{f}{\lambda_k}}{x_k}} > 0$ to obtain
   \begin{align*}
      \norm{\prox{\nicefrac{f}{\lambda_k}}{x_k} - x_k} + \dist{x_k}{\gX_\star^f} \geq \frac{2\alpha}{\lambda_k},
   \end{align*}
   or equivalently, 
   \begin{align*}
      \norm{\prox{\nicefrac{f}{\lambda_k}}{x_k} - x_k} \geq \frac{2\alpha}{\lambda_k} - \dist{x_k}{\gX_\star^f}.
   \end{align*}
   Now, by \Cref{assmp:sharp-minima}, 
   \begin{align*}
      \dist{x_k}{\gX_\star^f} \leq \frac{1}{\alpha} \rbrac{f(x_k) - f_{\inf}}.
   \end{align*}
   Combining the above two inequalities yields
   \begin{align*}
      \norm{\prox{\nicefrac{f}{\lambda_k}}{x_k} - x_k} \geq \frac{2\alpha}{\lambda_k} - \frac{1}{\alpha}\rbrac{f(x_k) - f_{\inf}}.
   \end{align*}
   Rearranging 
   \begin{align*}
      \frac{2\alpha}{\lambda_k} - \frac{1}{\alpha}\rbrac{f(x_k) - f_{\inf}} < t,
   \end{align*}
   gives
   \begin{align*}
      \lambda_k > \frac{2\alpha^2}{f(x_k) - f_{\inf} + \alpha t},
   \end{align*}
   which concludes the proof.
\end{proof}

\subsection{\texorpdfstring{Proof of \Cref{col:thm:linear-conv}}{Proof of Corollary~\ref{col:thm:linear-conv}}}
\begin{proof}
   Since we choose
   \begin{align*}
      0 \leq \lambda_k \leq \frac{2\alpha^2}{f(x_k) - f_{\inf} + \alpha t},
   \end{align*}
   it follows from \Cref{lemma:simplifiedlambda} that $\norm{x_k - \prox{\nicefrac{f}{\lambda_k}}{x_k}} > t$.
   The remainder of the argument then proceeds identically to the proof of \Cref{thm:linear-conv}.
\end{proof}

\subsection{\texorpdfstring{Proof of \Cref{lemma:uniform-lowerbound}}{Proof of Lemma~\ref{lemma:uniform-lowerbound}}}
\begin{proof}

   We first note that the set $\cbrac{x: \norm{x - x_\star^f} \leq \norm{x_0 - x_\star^f}}$ is closed and bounded in $\R^d$, and therefore compact.
   Moreover, the distance function $\dist{x}{\gX_\star^f}$ is continuous, which implies that the set $\cbrac{x: \dist{x}{\gX_f^\star} \geq \varepsilon}$ is closed.
   By definition, the set $\gS_\varepsilon$ is the intersection of these two sets, and is thus closed and bounded. Hence, $\gS_\varepsilon$ is compact.
   We know $\phi(\lambda, x)$ is a continuous function of $x$ for fixed $\lambda$ according to \Cref{lemma:cont-distx}.
   By the Weierstrass extreme value theorem, $\phi(\lambda, x)$ attains a minimum on the compact set $\gS_\varepsilon$, which we denote as 
   \begin{align*}
      m_f(\varepsilon, \lambda) = \min_{x \in \gS_\varepsilon} \norm{x - \prox{\nicefrac{f}{\lambda}}{x}}.
   \end{align*}
   Since $\gS_\varepsilon \cap \gX_f^\star = \emptyset$, we know that $m_f(\varepsilon, \lambda) > 0$, as otherwise there would exist $x_\star^f \in \gS_\varepsilon$, contradicting the definition of $\gS_\varepsilon$.
   Thus, for all $x \in \gS_\varepsilon$, we have 
   \begin{align*}
      \norm{x - \prox{\nicefrac{f}{\lambda}}{x}} \geq m_f(\varepsilon, \lambda).
   \end{align*}
\end{proof}

\subsection{\texorpdfstring{Proof of \Cref{lemma:lnrate-fixed-lambda}}{Proof of Theorem~\ref{lemma:lnrate-fixed-lambda}}}
\begin{proof}
   When $\lambda = 0$, the method reduces to {\BPM}, and the claim holds trivially by choosing $t \equiv m_f(\varepsilon,0) = \varepsilon$.

   We now turn to the case $\lambda > 0$.
   Applying \Cref{lemma:variant} with \Cref{fact:three-point-identity}, we obtain
   \begin{align}
     \label{eq:dnknw-1}
     f(x_{k+1}) - f(x_\star^f) \leq \frac{c_{t_k}(x_k) + \lambda}{2} \cdot \rbrac{\norm{x_k - x_\star^f}^2 - \norm{x_{k+1} - x_\star^f}^2 - \norm{x_k - x_{k+1}}^2}.
   \end{align}
   Thus for any $x_\star^f$ 
   \begin{align*}
      \norm{x_{k+1} - x_\star^f} \leq \norm{x_k - x_\star^f}, \quad \text{ for all } k.
   \end{align*}
   Consequently, for all iterates, we have $\norm{x_k - x_\star^f} \leq \norm{x_0 - x_\star^f}$. 
   We now focus on the non-tail iterates, namely those satisfying $\dist{x_k}{\gX_\star^f} \geq \varepsilon$.
   By the above inequality, all such iterates belong to the set $\gS_\varepsilon$.
   Applying \Cref{lemma:uniform-lowerbound}, we obtain that for every non-tail iterate $x_k$,
   \begin{align*}
      \norm{x_k - \prox{\nicefrac{f}{\lambda}}{x_k}} \geq m_f(\varepsilon,\lambda) > 0.
   \end{align*}
   Since this lower bound is uniform, for any fixed $\theta \in (0,1]$ we may set $t_k \equiv t = \theta \cdot m_f(\varepsilon,\lambda)$.
   Using this choice in the descent inequality \cref{eq:ref-org} established earlier, we obtain 
   \begin{align*}
      \rbrac{1 + \frac{\theta \cdot m_f(\varepsilon, \lambda)}{d_0}}\rbrac{f(x_{k+1}) - f(x_\star^f)} \leq f(x_k) - f(x_\star^f),
   \end{align*}
   and hence
   \begin{align*}
      f(x_{k+1}) - f_{\inf} \leq \rbrac{\frac{1}{1 + \nicefrac{\theta \cdot m_f(\varepsilon, \lambda)}{d_0}}} \rbrac{f(x_k) - f_{\inf}}.
   \end{align*}
   This establishes linear convergence for all non-tail iterates $x_k$ satisfying $\dist{x_k}{\gX_\star^f} \geq \varepsilon$.
\end{proof}

\subsection{\texorpdfstring{Proof of \Cref{lemma:undecided}}{Proof of Lemma~\ref{lemma:undecided}}}

\begin{proof}
   Let $0 \leq \lambda_1 < \lambda_2$. For any fixed $x$, $\phi(\lambda, x) = \norm{x - \prox{\nicefrac{f}{\lambda}}{x}}$ is a non-increasing function of $\lambda \geq 0$, according to \Cref{lemma:monotonicity-displacement}.
   Consequently, $\norm{x - \prox{\nicefrac{f}{\lambda_2}}{x}} \leq \norm{x - \prox{\nicefrac{f}{\lambda_1}}{x}}$ for any $x \in \gS_\varepsilon$.
   Taking infimum on both sides gives 
   \begin{align*}
      \inf_{x \in \gS_\varepsilon} \norm{x - \prox{\nicefrac{f}{\lambda_2}}{x}} \leq \inf_{x \in \gS_\varepsilon} \norm{x - \prox{\nicefrac{f}{\lambda_1}}{x}},
   \end{align*}
   which is exactly $m_f(\varepsilon, \lambda_2) \leq m_f(\varepsilon, \lambda_1)$.
\end{proof}

\subsection{\texorpdfstring{Proof of \Cref{lemma:cont-m}}{Proof of Lemma~\ref{lemma:cont-m}}}
\begin{proof}
   Consider any sequence $\lambda_n\to \lambda$ with $\lambda_n \geq 0$.
   \paragraph{Lower bound.}
   Let $\cbrac{n_k}$ be a subsequence such that $\lim \limits_{k\to\infty} m_f(\varepsilon,\lambda_{n_k}) = \liminf \limits_{n\to\infty} m_f(\varepsilon,\lambda_n)$.
   For each $k$, choose $x_{n_k}\in \gS_\varepsilon$ such that $m_f(\varepsilon, \lambda_{n_k}) \;=\; \phi(\lambda_{n_k}, x_{n_k})$.
   Since $\gS_\varepsilon$ is compact, there exists a further subsequence (still indexed by $n_k$) such that $x_{n_k}\to \bar x \in \gS_\varepsilon$.
   We next prove that $\phi(\lambda_{n_k}, x_{n_k})\to \phi(\lambda, \bar x)$,
   which implies
   \begin{align*}
      \liminf_{n \to \infty} m_f(\varepsilon,\lambda_n)
      = \lim_{k\to\infty} m_f(\varepsilon,\lambda_{n_k})
      = \lim_{k\to\infty}\phi(\lambda_{n_k}, x_{n_k})
      = \phi(\lambda, \bar x)
      \geq \min_{x\in\gS_\varepsilon}\phi(\lambda, x)
      = m_f(\varepsilon, \lambda).
   \end{align*}

   \paragraph{Continuity of $\phi(\lambda, x)$ along $(\lambda_{n_k}, x_{n_k})$.}
   We consider two cases.

   \begin{itemize}
        \item $\lambda > 0$:
        For all sufficiently large $k$, we have $\lambda_{n_k} > 0$.
        For the sake of simplicity, let us denote $z_{n_k} \eqdef \prox{\nicefrac{f}{\lambda_{n_k}}}{x_{n_k}}$.
        Fix the minimizer $x_\star^f\in\gX_\star^f$.
        By the optimality of $z_{n_k}$, we have
        \begin{align*}
        f(z_{n_k}) + \frac{\lambda_{n_k}}{2}\norm{z_{n_k} - x_{n_k}}^2
        \leq f(x_\star^f) + \frac{\lambda_{n_k}}{2}\norm{x_\star^f - x_{n_k}}^2.
        \end{align*}
        Since $f(z_{n_k}) \geq f(x_\star^f)$, we obtain $\norm{z_{n_k} - x_{n_k}} \leq \norm{x_\star^f - x_{n_k}}$.
        Recall that $x_{n_k}\in \gS_\varepsilon$ and $\gS_\varepsilon$ is bounded. Since the right hand side is bounded, $\cbrac{z_{n_k}}$ is bounded as well.
        Therefore, along a further subsequence (still indexed by $n_k$) we may assume $z_{n_k}\to \bar z$ for some $\bar z\in\R^d$.
        Now, take any $z \in \R^d$. By optimality of $z_{n_k}$,
        \begin{align*}
        f(z_{n_k}) + \frac{\lambda_{n_k}}{2}\norm{z_{n_k}-x_{n_k}}^2
        \le
        f(z) + \frac{\lambda_{n_k}}{2}\norm{z-x_{n_k}}^2.
        \end{align*}
        Taking $\lim\inf$ on the left and limit on the right, using the assumption that $f$ is closed, and that
        $\lambda_{n_k} \to \lambda$, $x_{n_k} \to \bar x$, $z_{n_k} \to \bar z$, we have
        \begin{align*}
        f(\bar z) + \frac{\lambda}{2}\norm{\bar z - \bar x}^2
        \le
        f(z) + \frac{\lambda}{2}\norm{z - \bar x}^2
        \qquad \forall z \in \R^d.
        \end{align*}
        Thus $\bar z$ minimizes $z\mapsto f(z)+\frac{\lambda}{2}\norm{z-\bar x}^2$, i.e. $\bar z = \prox{\nicefrac{f}{\lambda}}{\bar x}$.
        Since the prox minimizer is unique for $\lambda>0$, every convergent subsequence of $\cbrac{z_{n_k}}$ must converge to $\prox{\nicefrac{f}{\lambda}}{\bar x}$, hence the whole sequence satisfies $z_{n_k} \to \prox{\nicefrac{f}{\lambda}}{\bar x}$.
        As a result,
        \begin{align*}
        \phi(\lambda_{n_k}, x_{n_k}) = \norm{x_{n_k} - z_{n_k}} \to \norm{\bar x-\prox{\nicefrac{f}{\lambda}}{\bar x}} = \phi(\lambda, \bar x).
        \end{align*}
        
        \item $\lambda = 0$:
        If $\lambda_{n_k}=0$ for infinitely many $k$, then along that subsequence $\phi(\lambda_{n_k}, x_{n_k})= \dist{x_{n_k}}{\gX_\star^f}\to \dist{\bar x}{\gX_\star^f}=\phi(0, \bar x)$ by continuity of the distance function.
        Otherwise, we may assume $\lambda_{n_k} > 0$ for all sufficiently large $k$.
        For any minimizer $x_\star^f \in \gX_\star^f$, by optimality of $z_{n_k}$ we have
        \begin{align*}
        f(z_{n_k}) + \frac{\lambda_{n_k}}{2}\norm{z_{n_k}-x_{n_k}}^2 \leq f(x_\star^f) + \frac{\lambda_{n_k}}{2}\norm{x_\star^f - x_{n_k}}^2.
        \end{align*}
        Since $f(z_{n_k})\ge f(x_\star^f)$, this implies $\norm{z_{n_k}-x_{n_k}} \leq \norm{x_\star^f - x_{n_k}}$.
        Taking the infimum over $x_\star^f \in \gX_\star^f$ yields
        \begin{align}
        \label{eq:star}
        \norm{x_{n_k} - z_{n_k}} \leq \dist{x_{n_k}}{\gX_\star^f}.
        \end{align}
        Hence
        \begin{align*}
        \limsup_{k \to \infty} \norm{x_{n_k} - z_{n_k}}
        \leq \lim_{k\to\infty}\dist{x_{n_k}}{\gX_\star^f}
        = \dist{\bar x}{\gX_\star^f},
        \end{align*}
        where we used continuity of $x \mapsto \dist{x}{\gX_\star^f}$.
        Conversely, note that \cref{eq:star} and boundedness of $\cbrac{x_{n_k}}$ imply that $\cbrac{z_{n_k}}$ is bounded.
        Choose a subsequence (still indexed by $n_k$) such that $\norm{x_{n_k}-z_{n_k}}$ converges to its limit inferior and $z_{n_k}\to \bar z$.
        Comparing objective values at $z_{n_k}$ and $x_\star^f \in \gX_\star^f$ gives
        \begin{align*}
        f(z_{n_k})\le f(x_\star^f) + \frac{\lambda_{n_k}}{2}\norm{x_\star^f-x_{n_k}}^2.
        \end{align*}
        Letting $k \to \infty$ and using $\lambda_{n_k} \to  0$ and boundedness of $\cbrac{x_{n_k}}$ gives $\limsup f(z_{n_k}) \leq f(x_\star^f)$.
        By lower semicontinuity of~$f$,
        \begin{align*}
        f(\bar z) \leq \liminf_{k \to \infty} f(z_{n_k}) \leq f(x_\star^f),
        \end{align*}
        so $\bar z \in \gX_\star^f$.
        Therefore,
        \begin{align*}
        \liminf_{k \to \infty} \norm{x_{n_k} - z_{n_k}} = \lim_{k\to\infty}\norm{x_{n_k} - z_{n_k}} = \norm{\bar x - \bar z} \geq \dist{\bar x}{\gX_\star^f}.
        \end{align*}
        Combining limit superior and limit inferior gives $\norm{x_{n_k} - z_{n_k}} \to \dist{\bar x}{\gX_\star^f} = \phi(0, \bar x) = \phi(\lambda, \bar x)$.
   \end{itemize}

   Thus, in both cases we have shown $\phi(\lambda_{n_k}, x_{n_k})\to \phi(\lambda, \bar x)$.
        
    \paragraph{Upper bound.}
    It remains to prove the matching upper bound for the original sequence $\cbrac{\lambda_n}$.
    Let $\hat x \in \gS_\varepsilon$ be any minimizer of $\phi(\lambda, \cdot)$ over $\gS_\varepsilon$. Then $
    m_f(\varepsilon, \lambda_n) = \min_{x\in\gS_\varepsilon}\phi(\lambda_n, x) \leq \phi(\lambda_n, \hat x)$.
    Applying the above reasoning to the pair $(\lambda_n,\hat x)$ (i.e., with $x_n \equiv \hat x$), we have $\phi(\lambda_n, \hat x) \to \phi(\lambda, \hat x) = m_f(\varepsilon, \lambda)$, and thus
    \begin{align*}
    \limsup_{n \to \infty} m_f(\varepsilon,\lambda_n)\leq m_f(\varepsilon, \lambda).
    \end{align*}
    Combining the $\liminf$ and $\limsup$ bounds yields $m_f(\varepsilon, \lambda_n) \to m_f(\varepsilon, \lambda)$.
    Since the sequence $\cbrac{\lambda_n}$ was arbitrary, $\lambda \mapsto m_f(\varepsilon, \lambda)$ is continuous on $[0, \infty)$.

\end{proof}

\subsection{\texorpdfstring{Proof of \Cref{lemma:simple-quadatics}}{Proof of Lemma~\ref{lemma:simple-quadatics}}}

\begin{proof}
   For $\lambda > 0$, we have 
   \begin{align*}
      \prox{\nicefrac{f}{\lambda}}{x} = \rbrac{\lambda \mI + \mQ}^{-1}\lambda x, \qquad x - \prox{\nicefrac{f}{\lambda}}{x} = \mQ\rbrac{\lambda \mI + \mQ}^{-1} x.
   \end{align*}
   Consider the spectral decomposition $\mQ = \mU \operatorname{diag}\rbrac{\sigma_1, \hdots, \sigma_d} \mU^{\top}$, where $\mU$ is an orthogonal matrix and $\sigma_1, \hdots, \sigma_d$ are the eigenvalues of $\mQ$.
   Then
   \begin{align*}
      \mQ(\lambda \mI + \mQ)^{-1} = \mU \operatorname{diag}\rbrac{\frac{\sigma_1}{\sigma_1 + \lambda}, \hdots, \frac{\sigma_d}{\sigma_d + \lambda}} \mU^{\top}.
   \end{align*}
   Let $y = \mU^{\top} x$. Since $\mU$ is orthogonal, we have $\norm{\mU x} = \norm{y}$.
   Now consider two cases:
   \begin{itemize}
        \item If $\mQ \succ \mO$, the set of minimizer is given by $\gX_\star^f = \cbrac{0}$, and we have 
        \begin{align*}
        \phi(\lambda, x) = \norm{\mQ\rbrac{\lambda \mI + \mQ}^{-1} x} &= \norm{\mU \operatorname{diag}\rbrac{\frac{\sigma_1}{\sigma_1 + \lambda}, \hdots, \frac{\sigma_d}{\sigma_d + \lambda}} y} \\
        &= \norm{\operatorname{diag}\rbrac{\frac{\sigma_i}{\sigma_i + \lambda}}y} = \sqrt{\sum_{i=1}^{d}\rbrac{\frac{\sigma_i}{\sigma_i + \lambda}}^2y_i^2} \geq \frac{\sigma_{\min}}{\sigma_{\min} + \lambda}\norm{x}.
        \end{align*}
        Thus, 
        \begin{align*}
        m_f(\varepsilon, \lambda) = \min_{x\in \gS_\varepsilon} \phi(\lambda, x) \geq \frac{\sigma_{\min}}{\sigma_{\min} + \lambda}\varepsilon = \frac{\mu}{\mu + \lambda}\varepsilon.
        \end{align*}
        This bound is tight, as equality is achieved when $\norm{x}=\varepsilon$ and $x$ is an eigenvector of $\mQ$ associated with $\sigma_{\min}$.
        \item If $\mQ \succeq \mO$ with some $\sigma_i = 0$. In this case, the set of minimizers of $f$ is $\gX_f^\star = \operatorname{ker} \mQ$. For $x \in \operatorname{ker} \mQ$, $\mU\operatorname{diag}(\sigma_i)y = \mQ x = 0$ is equivalent to $\operatorname{diag}(\sigma_i)y = 0$, which means that the set of $y$ is exactly 
        \begin{align*}
        \gY = \cbrac{y: y_i = 0 \text{ for all indices with } \sigma_i > 0}.
        \end{align*}
        Notice that 
        \begin{align*}
        \dist{x}{\operatorname{ker} \mQ} = \dist{y}{\gY} = \sum_{\cbrac{i:\sigma_i > 0}} y_i^2, \qquad \phi(\lambda, x) = \sqrt{\sum_{\cbrac{i:\sigma_i > 0}} \rbrac{\frac{\sigma_i}{\sigma_i + \lambda}}^2y_i^2}
        \end{align*}
        Let $\sigma_+$ be the smallest positive eigenvalue of $\mQ$. For other indices, we have 
        \begin{align*}
        \frac{\sigma_i}{\sigma_i + \lambda} \geq \frac{\sigma_+}{\sigma_+ + \lambda}.
        \end{align*}
        Thus, 
        \begin{align*}
        \phi(\lambda, x) \geq \frac{\sigma_+}{\sigma_+ + \lambda}\dist(x, \gX_\star^f).
        \end{align*}
        Since $\gS_\varepsilon$ constrains $\dist{x}{\gX_\star^f} \geq \varepsilon$, we get 
        \begin{align*}
        m_f(\varepsilon, \lambda) \geq \frac{\sigma_+}{\sigma_+ + \lambda}\varepsilon.
        \end{align*}
        We realize the bound is tight once we choose $\norm{x} = \varepsilon$, which corresponds to the eigenvector of $\sigma_+$.
   \end{itemize}
\end{proof}

\subsection{\texorpdfstring{Proof of \Cref{thm:combination-complicated}}{Proof of Theorem~\ref{thm:combination-complicated}}}

\begin{proof}
   Following the same argument as in \Cref{thm:linear-conv} (refer to \cref{eq:ref-org}), we obtain
   \begin{align}\label{eq:oabiwsvdz}
      f(x_{k+1}) - f_{\inf} \leq f(x_k) - f_{\inf} - \frac{\norm{x_k - x_{k+1}}}{\norm{x_{k+1} - x_\star^f}} \cdot \rbrac{f(x_{k+1}) - f_{\inf}}.
   \end{align}
   Rearranging terms yields
   \begin{align}
      \label{eq:org-per-step-descent}
      \rbrac{1 + \frac{\norm{x_k - x_{k+1}}}{\norm{x_{k+1} - x_\star^f}}} \cdot \rbrac{f(x_{k+1}) - f(x_\star^f)} \leq f(x_k) - f(x_\star^f).
   \end{align}
   Up to this point, the descent inequality holds for arbitrary choices of $t_k$. 
   We now recall the structure of the update.
   In the standard {\BPM} setting, unless the minimizer is found in one step, the iterate $x_{k+1}$ lies on the boundary of the trust-region.
   More precisely,
   \begin{align*}
      \norm{x_k - x_{k+1}} = \min \cbrac{t_k, \norm{x_k - x_\star^f}},
   \end{align*}
   and hence $\norm{x_k - x_{k+1}} = t_k$ whenever $t_k \le \norm{x_k - x_\star^f}$.
   With an additional quadratic regularizer, the same reasoning applies, yielding
   \begin{align*}
      \norm{x_k - x_{k+1}} = \min \cbrac{t_k, \norm{x_k - \prox{\nicefrac{f}{\lambda_k}}{x_k}}},
   \end{align*}
   since $\prox{\nicefrac{f}{\lambda_k}}{x_k}$ is the global minimizer of the model. 
   We therefore distinguish two cases.
   \begin{itemize}
        \item \textbf{Active constraint.} If $t_k \leq \norm{x_k - \prox{\nicefrac{f}{\lambda_k}}{x_k}}$, then the ball constraint is active and $\norm{x_k - x_{k+1}} = t_k$.
        By \cref{eq:dnknw-1}, the sequence $\cbrac{\norm{x_k - x_\star^f}}$ is non-increasing, and hence $\norm{x_{k+1} - x_\star^f} \leq \norm{x_0 - x_\star^f} \eqdef d_0$.
        Consequently,
        \begin{align}
        \label{eq:lower-bound-eq1}
        1 + \frac{\norm{x_k - x_{k+1}}}{\norm{x_{k+1} - x_\star^f}} \geq 1 + \frac{t_k}{d_0}.
        \end{align}
        \item \textbf{Inactive constraint.} If $t_k > \norm{x_k - \prox{\nicefrac{f}{\lambda_k}}{x_k}}$, then $x_{k+1} = \prox{\nicefrac{f}{\lambda_k}}{x_k}$.
        In this case, to obtain a meaningful lower bound we assume that $f$ is $\mu$-strongly convex.
        Let $\tilde{\nabla}f(\prox{\nicefrac{f}{\lambda_k}}{x_k}) = \lambda_k(x - x_{k+1}) \in \partial f(\prox{\nicefrac{f}{\lambda_k}}{x_k})$ and note that
        $\tilde{\nabla}f(x_\star^f) \eqdef 0 \in \partial f(x_\star^f)$.
        Then
        \begin{align*}
        \norm{x_k - \prox{\nicefrac{f}{\lambda_k}}{x_k}} = \frac{1}{\lambda_k} \cdot \norm{\tilde{\nabla} f(\prox{\nicefrac{f}{\lambda_k}}{x_k})} &= \frac{1}{\lambda_k} \cdot \norm{\tilde{\nabla} f(\prox{\nicefrac{f}{\lambda_k}}{x_k}) - \tilde{\nabla} f(x_\star^f)} \\
        &\geq \frac{\mu}{\lambda_k} \cdot \norm{\prox{\nicefrac{f}{\lambda_k}}{x_k} - x_\star^f},
        \end{align*}
        where the last inequality follows from strong convexity.
        Therefore,  
        \begin{align}
        \label{eq:lower-bound-eq2}
        1 + \frac{\norm{x_k - x_{k+1}}}{\norm{x_{k+1} - x_\star^f}} \geq 1 + \frac{\mu}{\lambda_k} \cdot \frac{\norm{\prox{\nicefrac{f}{\lambda_k}}{x_k} - x_\star^f}}{\norm{x_{k+1} - x_\star^f}} = 1 + \frac{\mu}{\lambda_k}.
        \end{align}
   \end{itemize}

   Combining \cref{eq:lower-bound-eq1} and \cref{eq:lower-bound-eq2}, we obtain
   \begin{align*}
      \min \cbrac{1 + \frac{t_k}{d_0}, 1 + \frac{\mu}{\lambda_k}} \cdot \rbrac{f(x_{k+1}) - f(x_\star^f)} \leq f(x_k) - f(x_\star^f),
   \end{align*}
   or equivalently,
   \begin{align}
      \label{eq:perstep-favorable-descent}
      f(x_{k+1}) - f(x_\star^f) \leq \rbrac{1 + \min\cbrac{\frac{t_k}{d_0}, \frac{\mu}{\lambda_k}}}^{-1} \cdot \rbrac{f(x_k) - f(x_\star^f)}.
   \end{align}
   Unrolling this inequality for $k=0, \hdots, K-1$ gives 
   \begin{align*}
      f(x_{K}) - f(x_\star^f) \leq \prod_{k=0}^{K-1} \rbrac{1 + \min\cbrac{\frac{t_k}{d_0}, \frac{\mu}{\lambda_k}}}^{-1} \cdot \rbrac{f(x_0) - f(x_\star^f)}.
   \end{align*}
\end{proof}

\subsection{\texorpdfstring{Proof of \Cref{lemma:variant}}{Proof of Lemma~\ref{lemma:variant}}}
\begin{proof}
   Let 
   \begin{align*}
      u \in \argmin_{z \in \gB_{t}(x)} \cbrac{f(z) + \frac{\lambda}{2}\norm{z - x}^2}.
   \end{align*}
   Applying \Cref{fact:subdiff-calculus}, the first order optimality condition for this problem is
   \begin{align*}
      0 \in \partial f(u) + \lambda \rbrac{u - x} + \gN_{\gB_{t}(x)}(u),
   \end{align*}
   where $\gN_{\gB_{t}(x)}(u)$ is the normal cone of the ball centered at $x$ with radius $t$ associated with point $u$.
   We now distinguish two cases. 
   \begin{itemize}
      \item \textbf{Interior case.} If $u$ lies in the interior of the ball, then $\norm{u - x} < t$ and $\gN_{\gB_t(x)}(u) = \cbrac{0}$.
      The optimality condition reduces to 
      \begin{align*}
         0 \in \partial f(u) + \lambda(u - x),
      \end{align*}
      which is equivalent to $\lambda(x - u) \in \partial f(u)$.
      In particular, when $\lambda = 0$, this yields $0 \in \partial f(u)$, recovering the interior case of the second brox theorem.

      \item \textbf{Boundary case.} If $u$ lies on the boundary of the ball, then $\norm{u - x} = t$.
      It is known that $\gN_{\gB_t(x)}(u) = \cbrac{\alpha(u - x) : \alpha \geq 0}$.
      This implies 
      \begin{align*}
         \rbrac{c_t(x) + \lambda}(x - u) \in \partial f(u),
      \end{align*}
      for some $c_t(x) \geq 0$.
   \end{itemize} 
   Combining the two cases, we conclude that there exists some $c_t(x) \ge 0$ for which $\rbrac{c_t(x) + \lambda}(x - u) \in \partial f(u)$.  
   Finally, by the definition of a subgradient, it follows that
   \begin{align*}
      f(y) - f(u) \geq \rbrac{c_t(x) + \lambda}\inner{x - u}{y - u}.
   \end{align*}
\end{proof}

\subsection{\texorpdfstring{Proof of \Cref{lemma:bounded-range}}{Proof of Lemma~\ref{lemma:bounded-range}}}

\begin{proof}
   By the optimality condition associated with $\prox{\nicefrac{f}{\lambda}}{x}$, we have
   \begin{align*}
      f(\prox{\nicefrac{f}{\lambda}}{x}) + \frac{\lambda}{2}\norm{x - \prox{\nicefrac{f}{\lambda}}{x}}^2 \leq f(x_\star^f) + \frac{\lambda}{2}\norm{x - x_\star^f}^2 \qquad \forall \lambda \in (0, \infty).
   \end{align*}
   Since $f(\prox{\nicefrac{f}{\lambda}}{x}) \geq f(x_\star^f)$, it follows immediately that
   \begin{align*}
      \norm{x - \prox{\nicefrac{f}{\lambda}}{x}}^2 \leq \norm{x - x_\star^f}^2.
   \end{align*}
   In the special case $\lambda = 0$, the proximal operator may be set-valued, and we adopt the convention $\prox{\nicefrac{f}{\lambda}}{x} \eqdef \ProjOn{\gX_\star^f}{x}$.
   Then $\norm{x - \prox{\nicefrac{f}{\lambda}}{x}} = \dist{x}{\gX_\star^f} \leq \norm{x - x_\star^f}$ for any $x_\star^f \in \gX_\star^f$.
   Finally, in the case $\lambda = \infty$, we have $\norm{x - \prox{\nicefrac{f}{\lambda}}{x}} = 0$.
\end{proof}

\subsection{\texorpdfstring{Proof of \Cref{lemma:monotonicity-displacement}}{Proof of Lemma~\ref{lemma:monotonicity-displacement}}}

\begin{proof}
   Pick $\lambda_1 > \lambda_2 > 0$ and let 
   \begin{align*}
      u_1 \eqdef \prox{\nicefrac{f}{\lambda_1}}{x}, \qquad u_2 \eqdef \prox{\nicefrac{f}{\lambda_2}}{x}.
   \end{align*}
   We have 
   \begin{align*}
      \lambda_1 \rbrac{x - u_1} \in \partial f(u_1), \qquad \lambda_2 \rbrac{x - u_2} \in \partial f(u_2).
   \end{align*}
   Since according to \Cref{fact:mono}, $\partial f$ is monotone, it follows that
   \begin{align*}
      \inner{\lambda_1(x - u_1) - \lambda_2(x - u_2)}{u_1 - u_2} \geq 0.
   \end{align*}
   Now, denote $d_1 \eqdef x - u_1$, $d_2 \eqdef x - u_2$.
   Since $u_1 - u_2 = x - d_1 - x + d_2 = d_2 - d_1$, we have 
   \begin{align*}
      \inner{\lambda_1d_1 - \lambda_2d_2}{d_2 - d_1} \geq 0,
   \end{align*}
   i.e., 
   \begin{align*}
      \rbrac{\lambda_1 + \lambda_2}\inner{d_1}{d_2} - \lambda_1 \norm{d_1}^2 - \lambda_2 \norm{d_2}^2 \geq 0.
   \end{align*}
   Thus, 
   \begin{align}
      \label{eq:ineq-tmp}
      \lambda_1\norm{d_1}^2 + \lambda_2\norm{d_2}^2 \leq \rbrac{\lambda_1 + \lambda_2}\inner{d_1}{d_2} \leq \rbrac{\lambda_1 + \lambda_2}\norm{d_1}\norm{d_2}.
   \end{align}
   We want to prove that $\norm{d_1} \leq \norm{d_2}$.
   If $x \in \gX_\star^f$, then $\norm{d_1} = \norm{d_2}$ and the conclusion follows trivially.
   Now consider $x \notin \gX_\star^f$, where we have $\norm{d_1}, \norm{d_2} > 0$.
   Suppose for a contradiction that $\norm{d_1} > \norm{d_2}$ and let $r \eqdef \frac{\norm{d_1}}{\norm{d_2}} > 1$.
   Dividing both sides of \cref{eq:ineq-tmp} by $\norm{d_2}^2$, we have 
   \begin{align*}
      \lambda_1 r^2 - (\lambda_1 + \lambda_2)r + \lambda_2 \leq 0.
   \end{align*}
   However, this is only possible when $r \in \sbrac{\frac{\lambda_2}{\lambda_1}, 1}$.
   The contradiction implies that $\norm{d_1} \leq \norm{d_2}$.
   The case of $\lambda_2 = 0$ is trivial.
   Thus, we conclude that for any fixed $x$, the displacement function $\phi(\lambda, x)$ is a non-increasing function of $\lambda \geq 0$.
\end{proof}

\subsection{\texorpdfstring{Proof of \Cref{lemma:limit}}{Proof of Lemma~\ref{lemma:limit}}}
\begin{proof}
   If $x \in \gX_f^\star$, then $\phi(\lambda, x) = 0$ and the claim holds trivially.
   Now assume $x \notin \gX_\star^f$.
   Denoting $u_\lambda = \prox{\nicefrac{f}{\lambda}}{x}$, we have 
   \begin{align*}
      f(u_\lambda) + \frac{\lambda}{2}\norm{u_\lambda - x}^2 \leq f(x).
   \end{align*}
   This gives us 
   \begin{align*}
      \norm{u_\lambda - x}^2 \leq \frac{2}{\lambda}\rbrac{f(x) - f_{\inf}}
   \end{align*}
   Hence, as long as $\lambda > \frac{2\rbrac{f(x) - f_{\inf}}}{\varepsilon^2}$, we have $\phi(\lambda, x) = \norm{u_\lambda - x} < \varepsilon$, proving the first part of the claim.
   
   Now, consider a sequence $\cbrac{\lambda_n}^{\infty}_{n=1}$, where $\lim \limits_{n \rightarrow \infty} \lambda_n = 0$, $\lambda_n > 0$
   Optimality of $u_{\lambda_n}$ gives 
   \begin{align}
      \label{eq:temp:key-1}
      f(u_{\lambda_n}) + \frac{\lambda_n}{2}\norm{u_{\lambda_n} - x}^2 \leq f(x_\star^f) + \frac{\lambda_n}{2}\norm{x_\star^f - x}^2 \qquad \forall x_\star^f \in \gX_\star^f.
   \end{align}
   This gives 
   \begin{align*}
      f(u_{\lambda_n}) \leq f(x_\star^f) + \frac{\lambda_n}{2}\norm{x_\star^f - x}^2 \qquad \forall x_\star^f \in \gX_\star^f.
   \end{align*}
   Thus, $\lim \limits_{n \rightarrow \infty} f(u_{\lambda_n}) \leq f_{\inf}$,
   and since $f(u_{\lambda_n}) \geq f_{\inf}$, we have $\lim \limits_{n \rightarrow \infty} f(u_{\lambda_n}) = f_{\inf}$.
   This tells us every cluster point of $\cbrac{u_{\lambda_n}}_{n=0}^\infty$ lies in $\gX_f^\star$.

   From \Cref{lemma:bounded-range}, we know that $\norm{u_{\lambda_n} - x} \leq \norm{x - x_\star^f}$ for any $x_\star^f$ .
   Thus $\limsup \limits_{n \rightarrow \infty} \norm{u_{\lambda_n} - x} \leq \dist{x}{\gX_f^\star}$.
   
   Suppose there exists a subsequence $\cbrac{u_{\lambda_{n_k}}}$ such that 
   \begin{align}
      \label{eq:badt}
      \lim \limits_{k \rightarrow \infty} \norm{x - u_{\lambda_{n_k}}} < \dist{x}{\gX_f^\star}.
   \end{align}
   Since we know $\lim_{n \rightarrow \infty} f(u_{\lambda_n}) = f_{\inf}$ and the sequence is bounded, some sequence converges to a point $\bar{u} \in \R^d$. By lower-semicontinuity we have $f(\bar{u}) \leq \liminf \limits_{k \rightarrow \infty} f(u_{\lambda_{n_k}}) = f_{\inf}$,
   which indicates $\bar{u} \in \gX_\star^f$, contradicting \eqref{eq:badt}. 
   Thus $\liminf \limits_{n \rightarrow \infty} \norm{x - u_{\lambda_n}} \geq \dist{x}{\gX_\star^f}$.
   
   Combining both we conclude that $\lim_{n \rightarrow \infty} \norm{x - u_{\lambda_n}} = \dist{x}{\gX_\star^f}$.
   Since the sequence $\cbrac{\lambda_n}$ is arbitrary, we conclude that $\lim \limits_{\lambda \rightarrow 0^+ } \norm{x - \prox{\nicefrac{f}{\lambda}}{x}} = \dist{x}{\gX_f^\star}$, which also means that $\phi(\lambda, x)$ is continuous at $\lambda=0$.
\end{proof}

\subsection{\texorpdfstring{Proof of \Cref{lemma:cont-distlambda}}{Proof of Lemma~\ref{lemma:cont-distlambda}}}
\begin{proof}
   Continuity at $\lambda = 0$ is established in the proof of \Cref{lemma:limit}.
   We therefore consider $\lambda > 0$.
   Consider a sequence $\cbrac{\lambda_n}_{n=0}^{\infty}$ such that $\lim \limits_{n \rightarrow \infty} \lambda_n = \lambda$, denote $u_n = \prox{\nicefrac{f}{\lambda_n}}{x}$, and $u = \prox{\nicefrac{f}{\lambda}}{x}$.
   From \Cref{lemma:bounded-range} we know that $\norm{u_n - x} \leq \dist{x}{\gX_f^\star}$, i.e., $\cbrac{u_n}$ a bounded sequence.
   By the Bolzano-Weierstrass theorem, it admits a convergent subsequence $\cbrac{u_{n_k}}$ with limit $\bar{u} \in \R^d$.
   By the definition of the proximal operator, for each $n$
   \begin{align*}
      0 \in \partial f(u_n) + \lambda_n(u_n - x).
   \end{align*}
   Define $v_n \eqdef - \lambda_n(u_n - x)$, so $v_n \in \partial f(u_n)$.
   Since 
   \begin{align*}
      u_{n_k} \rightarrow \bar{u}, \quad \lambda_{n_k} \rightarrow \lambda,
   \end{align*}
   we have 
   \begin{align*}
      v_{n_k} = -\lambda_{n_k} (u_{n_k} - x) \rightarrow -\lambda(\bar{u} - x) =: \bar{v}.
   \end{align*}
   Since $f$ is proper, closed and convex, $\partial f$ is maximally monotone \citep[Theorem 20.40]{bauschke2020correction} and its graph is closed.
   Thus, from 
   \begin{align*}
      (u_{n_k}, v_{n_k}) \rightarrow (\bar{u}, \bar{v}), \quad v_{n_k} \in \partial f(u_{n_k}),
   \end{align*}
   we deduce that $\bar{v} \in \partial f(\bar{u})$, i.e., 
   \begin{align*}
      0 \in \partial f(\bar{u}) + \lambda(\bar{u} - x).
   \end{align*}
   This shows that $\bar{u}$ minimizes the regularized objective.
   Since the minimizer is unique, we conclude that $\bar{u} = u$.
   We have therefore shown that every convergent subsequence of $\cbrac{u_n}$ converges to $u$.
   Since it is bounded, $\cbrac{u_n}$ converges to $u$.
   Now since $u_n \rightarrow u$ and the map $y \mapsto \norm{x - y}$ is continuous, 
   \begin{align*}
      \phi(\lambda_n, x) = \norm{x - u_n} \rightarrow \norm{x - u} = \phi(\lambda, x).
   \end{align*}
   Thus, $\phi(\lambda, x)$ is continuous at every $\lambda > 0$.
   
   We conclude the proof combining the above two cases.
\end{proof}

\subsection{\texorpdfstring{Proof of \Cref{lemma:phi-PPM-sequence}}{Proof of Lemma~\ref{lemma:phi-PPM-sequence}}}
\begin{proof}
   Due to the first order optimality condition,
   \begin{align*}
      \lambda(x_k - x_{k+1}) \in \partial f(x_{k+1}), \quad \lambda(x_{k+1} - x_{k+2}) \in \partial f(x_{k+2}).
   \end{align*}
   Since $\partial f$ is monotone, we have 
   \begin{align*}
      \inner{\lambda(x_k - x_{k+1}) - \lambda(x_{k+1} - x_{k+2})}{x_{k+1} - x_{k+2}} \geq 0,
   \end{align*}
   In the case of $\lambda = 0$, we have $x_{k} = x_{k+1} = x_{k+2}$, and the comparison is trivial.
   When $\lambda > 0$, we divide by $\lambda$ and obtain 
   \begin{align*}
      \inner{x_k - x_{k+1}}{x_{k+1} - x_{k+2}} \geq \norm{x_{k+1} - x_{k+2}}^2.
   \end{align*}
   In addition,
   \begin{align*}
      \inner{x_k - x_{k+1}}{x_{k+1} - x_{k+2}} \leq \norm{x_k - x_{k+1}}\norm{x_{k+1} - x_{k+2}},
   \end{align*}
   due to the Cauchy-Schwarz inequality. Now, if $\norm{x_{k+1} - x_{k+2}} = 0$, the conclusion trivially holds. Otherwise, we divide both sides by this term after chaining the inequality to obtain
   \begin{align*}
      \norm{x_{k+1} - x_{k+2}} \leq \norm{x_k - x_{k+1}}.
   \end{align*}
\end{proof}

\subsection{\texorpdfstring{Proof of \Cref{lemma:Lipschitz}}{Proof of Lemma~\ref{lemma:Lipschitz}}}
\begin{proof}
   The proof follows immediately from the fact that the proximal operator of a proper, closed and convex function is firmly nonexpansive.
\end{proof}

\subsection{\texorpdfstring{Proof of \Cref{lemma:cont-distx}}{Proof of Lemma~\ref{lemma:cont-distx}}}
\begin{proof}
   When $\lambda = 0$, $\phi(\lambda, x) = \dist{x}{\gX_f^\star}$, which is continuous.
   When $\lambda > 0$, for any $x, y \in \dom f$
   \begin{align*}
      \abss{\norm{x - \prox{\nicefrac{f}{\lambda}}{x}} - \norm{y - \prox{\nicefrac{f}{\lambda}}{y}}} &\leq \norm{x - \prox{\nicefrac{f}{\lambda}}{x} - y + \prox{\nicefrac{f}{\lambda}}{y}} \\
      &\leq \norm{x - y} + \norm{\prox{\nicefrac{f}{\lambda}}{x} - \prox{\nicefrac{f}{\lambda}}{y}} \\
      &\leq 2\norm{x - y},
   \end{align*}
   where the last inequality is a result of applying \Cref{lemma:Lipschitz}.
   Thus, $\phi(\lambda, x)$ is 2-Lipschitz, and hence continuous.
\end{proof}

\subsection{\texorpdfstring{Proof of \Cref{lemma:ettet}}{Proof of Lemma~\ref{lemma:ettet}}}
\begin{proof}
   We prove the contrapositive of the statement.
   Assume that $\norm{x_k - \prox{\nicefrac{f}{\lambda_k}}{x_k}} < t$.
   By the optimality condition of the proximal subproblem, for any $x_\star^f \in \gX_\star^f$,
   \begin{align*}
      f(\prox{\nicefrac{f}{\lambda_k}}{x_k}) + \frac{\lambda_k}{2}\norm{\prox{\nicefrac{f}{\lambda_k}}{x_k} - x_k}^2 \leq f(x_\star^f) + \frac{\lambda_k}{2}\norm{x_k - x_\star^f}^2.
   \end{align*}
   Rearranging terms gives
   \begin{align*}
      f(\prox{\nicefrac{f}{\lambda_k}}{x_k}) - f(x_\star^f) \leq \frac{\lambda_k}{2}\rbrac{\norm{x_k - x_\star^f}^2 - \norm{\prox{\nicefrac{f}{\lambda_k}}{x_k} - x_k}^2}.
   \end{align*}
   Since the inequality holds for arbitrary $x_\star^f \in \gX_\star^f$, we obtain
   \begin{align}
      \label{eq:keyalpha1-q}
      f(\prox{\nicefrac{f}{\lambda_k}}{x_k}) - f_{\inf}\leq \frac{\lambda_k}{2}\rbrac{\distsq{x_k}{\gX_\star^f} - \norm{\prox{\nicefrac{f}{\lambda_k}}{x_k} - x_k}^2}.
   \end{align}
   On the other hand, invoking \Cref{assmp:sharp-minima}, we have
   \begin{align*}
      f(\prox{\nicefrac{f}{\lambda_k}}{x_k}) - f_{\inf}\geq \alpha \cdot \distpow{q}{\prox{\nicefrac{f}{\lambda_k}}{x_k}}{\gX_\star^f}.
   \end{align*}
   Applying the triangle inequality yields
   \begin{align}
      \label{eq:keyalpha2-q}
      f(\prox{\nicefrac{f}{\lambda_k}}{x_k}) - f_{\inf} &\geq \alpha \rbrac{\norm{x_k - \ProjOn{\gX_\star^f}{\prox{\nicefrac{f}{\lambda_k}}{x_k}}} - \norm{x_k - \prox{\nicefrac{f}{\lambda_k}}{x_k}}}^q \notag \\
      &= \alpha \rbrac{\dist{x_k}{\gX_\star^f} - \norm{x_k - \prox{\nicefrac{f}{\lambda_k}}{x_k}}}^q.
   \end{align}
   Combining \cref{eq:keyalpha1-q} and \cref{eq:keyalpha2-q}, we obtain
   \begin{align*}
      \alpha \rbrac{\dist{x_k}{\gX_\star^f} - \norm{x_k - \prox{\nicefrac{f}{\lambda_k}}{x_k}}}^q \leq \frac{\lambda_k}{2}\rbrac{\distsq{x_k}{\gX_\star^f} - \norm{\prox{\nicefrac{f}{\lambda_k}}{x_k} - x_k}^2}.
   \end{align*}
   Using the factorization
   \begin{align*}
      &\distsq{x_k}{\gX_\star^f} - \norm{\prox{\nicefrac{f}{\lambda_k}}{x_k} - x_k}^2 \\
      &\quad =
      \rbrac{\dist{x_k}{\gX_\star^f} - \norm{x_k - \prox{\nicefrac{f}{\lambda_k}}{x_k}}} \rbrac{\dist{x_k}{\gX_\star^f} + \norm{x_k - \prox{\nicefrac{f}{\lambda_k}}{x_k}}},
   \end{align*}
   and recalling that $\dist{x_k}{\gX_\star^f} \ge t$ and
   $\norm{x_k - \prox{\nicefrac{f}{\lambda_k}}{x_k}} < t$, we may divide both sides of the inequality by $\dist{x_k}{\gX_\star^f} - \norm{x_k - \prox{\nicefrac{f}{\lambda_k}}{x_k}} > 0$ to obtain
   \begin{align*}
      \alpha \rbrac{\dist{x_k}{\gX_\star^f} - \norm{x_k - \prox{\nicefrac{f}{\lambda_k}}{x_k}}}^{q-1} \leq \frac{\lambda_k}{2} \rbrac{\dist{x_k}{\gX_\star^f} + \norm{x_k - \prox{\nicefrac{f}{\lambda_k}}{x_k}}}.
   \end{align*}
   Using again $\dist{x_k}{\gX_\star^f} \geq t$ and
   $\norm{x_k - \prox{\nicefrac{f}{\lambda_k}}{x_k}} < t$, we further obtain
   \begin{align*}
      \alpha \rbrac{\dist{x_k}{\gX_\star^f} - t}^{q-1} < \frac{\lambda_k}{2} \rbrac{\dist{x_k}{\gX_\star^f} + t}.
   \end{align*}
   Rearranging yields
   \begin{align*}
      \lambda_k > \frac{2\alpha \rbrac{\dist{x_k}{\gX_\star^f} - t}^{q-1}} {\dist{x_k}{\gX_\star^f} + t},
   \end{align*}
   which concludes the proof.
\end{proof}

\subsection{\texorpdfstring{Proof of \Cref{lemma:explaining}}{Proof of Lemma~\ref{lemma:explaining}}}
\begin{proof}
   Since $x_\star^f \notin \gB_t(x)$, the minimizer of $f$ over the ball $\gB_t(x)$ must lie on the boundary $\bdry \gB_t(x)$ due to \Cref{fact:first-brox}.
   Let us denote this minimizer by $u_1$.
   Then
   \begin{align*}
      u_1 &= \argmin_{z \in \bdry \gB_t(x)} \cbrac{f(z)} = \argmin_{z \in \bdry \gB_t(x)} \cbrac{f(z) + \frac{\lambda}{2}\norm{z - x}^2}
   \end{align*}
   because on $\bdry \gB_t(x)$, the term $\norm{z-x}^2$ is constant and equal to $t^2$.
   Similarly, since the unconstrained minimizer of $f(z) + \frac{\lambda}{2}\norm{z - x}^2$ lies outside $\gB_t(x)$ and the objective is convex, its constrained minimizer over $\gB_t(x)$ must also lie on the boundary.
   Denoting this minimizer by $u_2$, we have
   \begin{align*}
      u_2 = \argmin_{z \in \bdry \gB_t(x)}\cbrac{f(z) + \frac{\lambda}{2}\norm{z - x}^2} = u_1.
   \end{align*}
\end{proof}


\end{document}